\documentclass[11pt]{article}
\usepackage{amsmath,amsfonts,amsthm,amssymb, authblk,color,fullpage}
\usepackage{url}
\usepackage{graphicx,amssymb,amsmath,amsthm}
\usepackage{enumerate}
\usepackage{dsfont}
\usepackage{booktabs}
\usepackage{mathrsfs}
\usepackage{verbatim}
\usepackage{epsfig}
\usepackage{lscape}
\usepackage{subfigure}
\usepackage[colorlinks, linkcolor=red, anchorcolor=blue, citecolor=green]{hyperref}
\usepackage{epstopdf}
\usepackage{color}
\usepackage{caption}
\usepackage{algorithm}
\usepackage{algpseudocode}

\usepackage{graphicx}
\usepackage{subfigure}
\usepackage{cleveref}
\graphicspath{{Figures/}{logo/}}

\providecommand{\keywords}[1]
{{\small\textbf{\textbf{Keywords:}} #1}}

\providecommand{\MSC}[1]
{{\small\textbf{\textbf{Mathematics Subject Classification:}} #1}}

\newcommand\blfootnote[1]{%
  \begingroup
  \renewcommand\thefootnote{}\footnote{#1}%
  \addtocounter{footnote}{-1}%
  \endgroup
}

\newtheorem{definition}{Definition}[section]

\newtheorem{prop}{Proposition}[section]
\newtheorem{theorem}{Theorem}[section]
\newtheorem{lemma}{Lemma}[section]

\newtheorem{remark}{Remark}[section]

\newtheorem{problem}{Problem}[section]

\numberwithin{equation}{section}
\date{}

\newcommand{\Addresses}{{
\bigskip
\footnotesize

Jian-Feng Cai, \textsc{Department of Mathematics, The Hong Kong University of Science and Technology, Clear Water Bay, Kowloon, Hong Kong SAR, China}\par\nopagebreak
\textit{E-mail address}, Jian-Feng Cai: \texttt{jfcai@ust.hk}

\medskip

Zhiqiang Xu, \textsc{LSEC, Inst.~Comp.~Math., Academy of
Mathematics and System Science,  Chinese Academy of Sciences, Beijing, 100091, China
\newline
School of Mathematical Sciences, University of Chinese Academy of Sciences, Beijing 100049, China}\par\nopagebreak
\textit{E-mail address}, Zhiqiang Xu: \texttt{xuzq@lsec.cc.ac.cn}

\medskip

Zili Xu, \textsc{School of Mathematical Sciences, East China Normal University, Shanghai 200241, China;
Shanghai Key Laboratory of PMMP, East China Normal University, Shanghai 200241, China; and
Key Laboratory of MEA, Ministry of Education, East China Normal University, Shanghai 200241, China}\par\nopagebreak
\textit{E-mail address}, Zili Xu: \texttt{zlxu@math.ecnu.edu.cn}
}}

\begin{document}
\baselineskip 14pt
\bibliographystyle{plain}

\title{ 
Interlacing Polynomial Method for Matrix Approximation via Generalized Column and Row Selection
\blfootnote{The work of Jian-Feng Cai was supported in part by the Hong Kong Research Grants Council GRF under Grants 16307023 and 16306124, and in part by Hong Kong Innovation and Technology Fund MHP/009/20. The work of Zhiqiang Xu is supported by the National Science Fund for Distinguished Young Scholars (12025108) and the National Nature Science Foundation of China (12471361, 12021001, 12288201).}
}

\author{Jian-Feng Cai, Zhiqiang Xu, and Zili Xu}

\maketitle

\begin{abstract}
This paper delves into the spectral norm aspect of the Generalized Column and Row Subset Selection (GCRSS) problem.
Given a target matrix $\mathbf{A}\in \mathbb{R}^{n\times d}$,
the objective of GCRSS is to select a column submatrix $\mathbf{B}_{:,S}\in\mathbb{R}^{n\times k}$ from the source matrix $\mathbf{B}\in\mathbb{R}^{n\times d_B}$ and a row submatrix $\mathbf{C}_{R,:}\in\mathbb{R}^{r\times d}$ from the source matrix $\mathbf{C}\in\mathbb{R}^{n_C\times d}$, such that the residual matrix $(\mathbf{I}_n-\mathbf{B}_{:,S}\mathbf{B}_{:,S}^{\dagger})\mathbf{A}(\mathbf{I}_d-\mathbf{C}_{R,:}^{\dagger} \mathbf{C}_{R,:})$ has a small spectral norm.
By employing the method of interlacing polynomials, we show that 
the smallest possible spectral norm of a residual matrix can be bounded by the largest root of a related expected characteristic polynomial.
A deterministic polynomial time algorithm is provided for the spectral norm case of the GCRSS problem.
We next apply our results to two specific GCRSS scenarios, one where $r=0$, simplifying the problem to the Generalized Column Subset Selection (GCSS) problem, and the other where $\mathbf{B}=\mathbf{C}=\mathbf{I}_d$, reducing the problem to the submatrix selection problem. 
In the GCSS scenario, we connect the expected characteristic polynomials to the convolution of multi-affine polynomials, leading to the derivation of the first provable reconstruction bound on the spectral norm of a residual matrix. 
In the submatrix selection scenario, we show that for any sufficiently small $\varepsilon>0$ and any square matrix $\mathbf{A}\in\mathbb{R}^{d\times d}$, there exist two subsets $S\subset [d]$ and $R\subset [d]$ of sizes $O(d\cdot \varepsilon^2)$ such that $\Vert\mathbf{A}_{S,R}\Vert_2\leq \varepsilon\cdot \Vert\mathbf{A}\Vert_2$. 
Unlike previous studies that have produced comparable results for very special cases where the matrix is either a zero-diagonal or a positive semidefinite matrix, our results apply universally to any square matrix $\mathbf{A}$.
\end{abstract}

\keywords{Matrix approximation, interlacing polynomial, column and row selection, polynomial convolutions}

\MSC{15A60, 90C27}

\section{Introduction}

High-dimensional datasets are commonly encountered in machine learning, often requiring the use of dimensionality reduction techniques as an initial step in the data processing pipeline. One traditional approach to obtaining a compact low-dimensional approximation of an input matrix is through singular value decomposition (SVD). However, the resulting factors obtained from SVD combine the rows or columns of the input matrix in a complex manner, making it difficult to intuitively interpret them. To address this challenge, one solution is to selectively choose a small number of columns and rows from specific source matrices that contain interpretable factors capable of approximating the span of the input data matrix. This technique is referred to as the generalized columns and rows selection problem (GCRSS). It provides an efficient and interpretable method for dimensionality reduction, which plays a crucial role in uncovering patterns within high-dimensional data. 
In this paper, we study the error analysis of GCRSS through the method of interlacing polynomials.

\subsection{Generalized Column and Row Subset Selection}\label{section: intro}

For an integer $n$ we write $[n]:=\{1,\ldots,n\}$.
For a matrix $\mathbf{A}\in\mathbb{R}^{n\times d}$, we use $\mathbf{A}_{R,S}$ to denote the submatrix of $\mathbf{A}$ consisting of rows indexed in the set $R$ and columns indexed in the set $S$. 
For simplicity, if $R=[n]$ we write $\mathbf{A}_{R,S}$ as $\mathbf{A}_{:,S}$, and if $S=[d]$ we write $\mathbf{A}_{R,S}$ as $\mathbf{A}_{R,:}$. We use $\mathbf{A}_{R,S}^{\dagger}$ to denote the Moore-Penrose pseudoinverse of $\mathbf{A}_{R,S}$, i.e., $\mathbf{A}_{R,S}^{\dagger}=(\mathbf{A}_{R,S})^{\dagger}$.
If $S$ or $R$ is an empty set, then we consider $\mathbf{A}_{R,S}^{\dagger} \mathbf{A}_{R,S}$ as a zero matrix.
The GCRSS problem is stated as follows:
\begin{problem}[Generalized Column and Row Subset Selection (GCRSS)]\label{pr3}
Given a target matrix $\mathbf{A}\in\mathbb{R}^{n\times d}$, two source matrices $\mathbf{B}\in\mathbb{R}^{n\times d_{{B}}}$ and $\mathbf{C}\in\mathbb{R}^{n_{{C}}\times d}$, and two nonnegative integers $k\leq \mathrm{rank}(\mathbf{B})$ and $r\leq \mathrm{rank}(\mathbf{C})$, find a subset $S\subset[d_{{B}}]$ of size $k$ and a subset $R\subset[n_{{C}}]$ of size $r$ such that
\begin{equation*}
\Vert(\mathbf{I}_n-\mathbf{B}_{:,S}\mathbf{B}_{:,S}^{\dagger})\mathbf{A}(\mathbf{I}_d-\mathbf{C}_{R,:}^{\dagger} \mathbf{C}_{R,:})\Vert_{\xi}
\end{equation*}
is minimized over all possible $\binom{d_B}{k}$ choices for the subset $S$ and over all possible $\binom{n_C}{r}$ choices for the subset $R$. Here $\xi=2$ or $\rm F$ denotes the spectral or Frobenius norm, respectively.
\end{problem}

Here we use the term \emph{generalized} to distinguish Problem \ref{pr3} from the traditional column and row subset selection where the columns and rows are selected from the target matrix itself \cite{BMD09,DR10,DRVW06,DMM08}. 
In the following, we present several specific instances of Problem \ref{pr3} and discuss their connections to other problems in matrix approximation theory, signal processing, and operator theory.

\subsubsection{Generalized Column Subset Selection}

By setting $r=0$, we have $R=\emptyset$ and Problem \ref{pr3} is reduced to the Generalized Column Subset Selection (GCSS) problem.
Here, we set $\mathbf{I}_d-\mathbf{C}_{\emptyset,:}^{\dagger} \mathbf{C}_{\emptyset,:}=\mathbf{I}_d$.
The GCSS problem can be stated  as follows:
\begin{problem}[Generalized Column Subset Selection (GCSS)]\label{pr1}
Given a target matrix $\mathbf{A}\in\mathbb{R}^{n\times d}$, a source matrix $\mathbf{B}\in\mathbb{R}^{n\times d_B}$ and a positive integer $k\leq \mathrm{rank}(\mathbf{B})$, the object is to  find a subset $S\subset[d_B]$ of size $k$ such that the following residual
\begin{equation*}
\Vert\mathbf{A}-\mathbf{B}_{:,S}\mathbf{B}_{:,S}^{\dagger}\mathbf{A} \Vert_{\xi}= \min\limits_{ \mathbf{X}\in\mathbb{R}^{k\times d}} \Vert\mathbf{A}-\mathbf{B}_{:,S}\mathbf{X} \Vert_{\xi}
\end{equation*}
is minimized over all possible $\binom{d_B}{k}$ choices for the subset $S$. Here, $\xi=2$ or $\rm F$ denotes the spectral or Frobenius norm, respectively.
\end{problem}

The GCSS has attracted much attention over the past few years due to its diverse applications in machine learning \cite{BRN10,BBC20,BMD09,CH92,DMM08,FGK13,GE03,HP92,OMG22}.
It is a powerful and straightforward method for approximating an input matrix $\mathbf{A}$, and it produces results that are easily interpretable in relation to the source matrix.

Several problems can be formulated as Generalized Column Subset Selection (GCSS), as mentioned in \cite{FGK13,OMG22}, including the classical Column Subset Selection (CSS) problem \cite{BDM14,BMD09,CXX23,GE96,WS18},  sparse approximation \cite{LBRN06,OF97},  distributed column subset selection \cite{FEGK13} and sparse canonical correlation analysis \cite{Hot36,OMG22}. Let us take column subset selection and  sparse approximation  as examples.
If we set $\mathbf{B}=\mathbf{A}$, Problem \ref{pr1} reduces to the classical Column Subset Selection (CSS) problem \cite{FEGK13}. In CSS, the objective is to select a subset of $k$ columns from $\mathbf{A}$ such that the discrepancy between $\mathbf{A}$ and its projection onto the subspace spanned by the selected columns is minimized.
If we consider $\mathbf{A}=\mathbf{a}\in\mathbb{R}^{n\times 1}$ as an $n$-dimensional vector and $\mathbf{B}\in\mathbb{R}^{n\times d_B}$, Problem \ref{pr1} can be interpreted as finding a sparse approximation to the target vector $\mathbf{a}$ by minimizing the difference between $\mathbf{a}$ and its projection onto the subspace spanned by the selected $k$ columns in $\mathbf{B}$. This problem is closely related to sparse coding and dictionary selection, which have wide applications in signal processing \cite{DK11,LBRN06,OF97}.

\subsubsection{Submatrix Selection}

Submatrix selection is another specific instance of Problem \ref{pr3}, which arises when we set $\mathbf{B}=\mathbf{I}_n$ and $\mathbf{C}=\mathbf{I}_d$.
In this setting, we obtain
\begin{equation*}
\Vert(\mathbf{I}_n-\mathbf{B}_{:,S}\mathbf{B}_{:,S}^{\dagger})\mathbf{A}(\mathbf{I}_d-\mathbf{C}_{R,:}^{\dagger} \mathbf{C}_{R,:})\Vert_{\xi}=\Vert \mathbf{A}_{S^C,R^C}\Vert_{\xi}
\end{equation*}
for any $S\subset[d_{{B}}]$ and $R\subset[n_{{C}}]$.
  Consequently, Problem \ref{pr3} transforms into the task of selecting an $(n-k)\times (d-r)$ submatrix $\mathbf{A}_{S^C,R^C}$ with the smallest Frobenius or spectral norm. To be more precise, the submatrix selection problem can be formulated as follows.

\begin{problem}[Submatrix Selection]\label{pr2}
Given a matrix $\mathbf{A}\in\mathbb{R}^{n\times d}$ and two positive integers $k\in [n]$ and $r\in [d]$, find a subset $S\subset[n]$ of size $k$ and a subset $R\subset[d]$ of size $r$ such that
\begin{equation*}
\Vert \mathbf{A}_{S,R}\Vert_{\xi}
\end{equation*}
is minimized over all possible $\binom{n}{k}$ choices for the subset $S$ and over all possible $\binom{d}{r}$ choices for the subset $R$. Here, $\xi=2$ or $\rm F$ denotes the spectral or Frobenius norm.

\end{problem}

The problem of selecting a submatrix with bounded norm has been extensively studied in various fields, including matrix approximation, Banach space theory, and more recently, theoretical computer science \cite{BT87,BT89,CT,CEKP07,inter3,You2,ravi1,RV07,SS12,XX21}. In addition to the spectral and Frobenius norms, the cut-norm of submatrices has also been investigated in numerous papers, because it is closely related to the problem of additive approximation of the MAX-CSP problems (for more detailed information, please refer to \cite{ADKK03,RV07}).

The principal submatrix selection is an interesting case of Problem \ref{pr2}.
If we take $\xi=2$, $n=d$ and require $S$ and $R$ to be the same subset, then Problem \ref{pr2} becomes the problem of selecting a $k\times k$ principal submatrix with the smallest spectral norm. This problem has a close relationship with the restricted invertibility principle of Bourgain and Tzafriri \cite{BT87,BT89,inter3,You2,ravi1,SS12}. The case where $\mathbf{A}$ is a zero-diagonal matrix is studied in \cite[Corollary 1.2]{BT91} and \cite[Corollary 1.5]{ravi3}. For the case where $\mathbf{A}$ is positive semidefinite we refer to \cite{inter3,You2} and \cite[Theorem 1.7]{ravi1}.

If we set $r=d$ and $\xi=2$, Problem \ref{pr2} can be transformed into the problem of selecting a row submatrix $\mathbf{A}_{S,:}\in\mathbb{R}^{k\times d}$ with the smallest spectral norm. This row selection problem has been studied in \cite{KT93,RV07}. An optimal estimate on the expectation of $\Vert \mathbf{A}_{S,:}\Vert_{2}$ was given by Rudelson and Vershynin in \cite[Theorem 1.8]{RV07}. Observe that $\Vert \mathbf{A}_{S,:}\Vert_{2}^2=\Vert (\mathbf{A}\mathbf{A}^{\rm T})_{S,S}\Vert_{2}$. Hence, this row selection problem can be further transferred into the principal submatrix selection for positive semidefinite matrices.

\subsection{Contributions}

This paper mainly focuses on the spectral norm case of the GCRSS problem (Problem \ref{pr3}). 
The spectral norm measures the largest singular value of the residual matrix, which captures the worst-case stretch of the matrix. 
This property makes the spectral norm particularly useful when the goal is to ensure that the sketch matrix preserves the dominant singular structure of the original matrix. 
In contrast, the Frobenius norm, as an ``averaging'' norm that sums contributions over all singular directions, is less sensitive to extreme values. 
Moreover, the degradation of spectral norms under column and row sampling has been extensively researched, but their behavior under projection onto the column or row space of a different source matrix remains largely unknown.
For example, the Frobenius norm case of the generalized column subset selection problem (Problem \ref{pr1}) has been investigated \cite{ABFMRZ16,FGK13,OMG22}, but the spectral norm case remains an open question. 
Motivated by these gaps, this paper concentrates on the spectral norm case of the GCRSS problem.

To present our main result, we first introduce some definitions. For any matrix $\mathbf{M}$, throughout this paper we set $\det[\mathbf{M}_{\emptyset,\emptyset}]=1$ and consider $\mathbf{M}_{\emptyset,:}^{\dagger} \mathbf{M}_{\emptyset,:}$ and $\mathbf{M}_{:,\emptyset}\mathbf{M}_{:,\emptyset}^{\dagger}$ as zero matrices. 

\begin{definition}\label{def2}
Let $\mathbf{A}\in\mathbb{R}^{n\times d}$, $\mathbf{B}\in\mathbb{R}^{n\times d_{{B}}}$, and $\mathbf{C}\in\mathbb{R}^{n_{{C}}\times d}$.  For any subset $S\subset[d_B]$ and any subset $R\subset[n_C]$, we define the degree-$d$ polynomial $p_{S,R}(x;\mathbf{A},\mathbf{B},\mathbf{C})$ as follows:
\begin{equation*}
p_{S,R}(x;\mathbf{A},\mathbf{B},\mathbf{C}):=\det\Big[x\cdot \mathbf{I}_{d}-\Big((\mathbf{I}_n-\mathbf{B}_{:,S}\mathbf{B}_{:,S}^{\dagger}) \mathbf{A}(\mathbf{I}_d-\mathbf{C}_{R,:}^{\dagger} \mathbf{C}_{R,:}) \Big)^{\rm T}\Big((\mathbf{I}_n-\mathbf{B}_{:,S}\mathbf{B}_{:,S}^{\dagger}) \mathbf{A}(\mathbf{I}_d-\mathbf{C}_{R,:}^{\dagger} \mathbf{C}_{R,:}) \Big)\Big ].
\end{equation*}
For any two integers $k$ and $r$ that satisfy $0\leq k\leq \mathrm{rank}(\mathbf{B})$ and $0\leq r\leq \mathrm{rank}(\mathbf{C})$, we define the degree-$d$ polynomial $P_{k,r}(x;\mathbf{A},\mathbf{B},\mathbf{C})$ as follows:
\begin{equation*}
P_{k,r}(x;\mathbf{A},\mathbf{B},\mathbf{C}):=\sum_{R\subset[n_C],|R|=r}	 \sum_{S\subset[d_B],|S|=k}\det[\mathbf{C}_{R,:}(\mathbf{C}_{R,:})^{\rm T}] \cdot \det[(\mathbf{B}_{:,S})^{\rm T}\mathbf{B}_{:,S}]\cdot  p_{S,R}(x;\mathbf{A},\mathbf{B},\mathbf{C}).
\end{equation*}
For any integer $0\leq k\leq \mathrm{rank}(\mathbf{B})$, we define the degree-$d$ polynomial $P_{k}(x;\mathbf{A},\mathbf{B})$ as follows:
\begin{equation*}
P_{k}(x;\mathbf{A},\mathbf{B}):=P_{k,0}(x;\mathbf{A},\mathbf{B},\mathbf{0})=\sum_{S\subset[d_B],|S|=k}\det[(\mathbf{B}_{:,S})^{\rm T}\mathbf{B}_{:,S}]\cdot  \det[x\cdot \mathbf{I}_{d}- \mathbf{A}^{\rm T}(\mathbf{I}_n-\mathbf{B}_{:,S}\mathbf{B}_{:,S}^{\dagger})\mathbf{A}].
\end{equation*}

\end{definition}

Recall that for a given matrix $\mathbf{M}\in\mathbb{R}^{n\times m}$ and a positive integer $1\leq k\leq \mathrm{rank}(\mathbf{M})$, volume sampling refers to the method of selecting a $k$-subset of $[m]$ such that the probability of selection is proportional to $\det[(\mathbf{M}_{:,S})^{\rm T}\mathbf{M}_{:,S}]$ \cite{DR10,DRVW06}.
As such, the polynomial $P_{k,r}(x;\mathbf{A},\mathbf{B},\mathbf{C})$ can be interpreted as the expectation of $p_{S,R}(x;\mathbf{A},\mathbf{B},\mathbf{C})$ under volume sampling on the columns of $\mathbf{B}$ and rows of $\mathbf{C}$, up to a constant.

The following theorem is our main result for the spectral norm case of Problem \ref{pr3}.
 It states that the largest root of the expected polynomial $P_{k,r}(x;\mathbf{A},\mathbf{B},\mathbf{C})$ serves as an upper bound for the minimum value among all the largest roots of $p_{S,R}(x;\mathbf{A},\mathbf{B},\mathbf{C})$.
 
\begin{theorem}\label{mth1-spr}
Let $\mathbf{A}\in\mathbb{R}^{n\times d}$, $\mathbf{B}\in\mathbb{R}^{n\times d_{{B}}}$, and $\mathbf{C}\in\mathbb{R}^{n_{{C}}\times d}$. For any two non-negative integers $k\leq\mathrm{rank}(\mathbf{B})$ and $r\leq\mathrm{rank}(\mathbf{C})$, there exist a $k$-subset $\widehat{S}\subset[d_B]$ and an $r$-subset $\widehat{R}\subset[n_C]$ that can be iteratively selected such that $\mathrm{rank}(\mathbf{B}_{:,\widehat{S}})=k$, $\mathrm{rank}(\mathbf{C}_{\widehat{R},:})=r$ and
\begin{equation*}
\Vert (\mathbf{I}_n-\mathbf{B}_{:,\widehat{S}}\mathbf{B}_{:,\widehat{S}}^{\dagger})  \mathbf{A}  (\mathbf{I}_d-\mathbf{C}_{\widehat{R},:}^{\dagger} \mathbf{C}_{\widehat{R},:}) \Vert_{2}^2
\leq \mathrm{maxroot}\ P_{k,r}(x;\mathbf{A},\mathbf{B},\mathbf{C}).
\end{equation*}
	
\end{theorem}

 The proof of Theorem \ref{mth1-spr} relies on the method of interlacing polynomials introduced by Marcus, Spielman, and Srivastava in their influential resolution of the Kadison-Singer problem \cite{inter2}. In Section \ref{section: basic prop}, we will demonstrate that the polynomial $P_{k,r}(x;\mathbf{A},\mathbf{B},\mathbf{C})$ has only nonnegative real roots when $k\leq\mathrm{rank}(\mathbf{B})$ and $r\leq\mathrm{rank}(\mathbf{C})$. Thus, the largest root of $P_{k,r}(x;\mathbf{A},\mathbf{B},\mathbf{C})$ is well defined.

Moreover, in Proposition \ref{P-three-exp} we show that $P_{k,r}(x;\mathbf{A},\mathbf{B},\mathbf{C})$ can be expressed as
\begin{equation*}
P_{k,r}(x;\mathbf{A},\mathbf{B},\mathbf{C})=	\frac{(-1)^{k+r}\cdot x^{r}}{ (d_B-k)!\cdot (n_C-r)!} \cdot \partial_y^{d_B-k}\partial_z^{n_C-r}\ \det\left[\begin{matrix}
\mathbf{I}_n  & \mathbf{0} & \mathbf{B} & \mathbf{A}  \\
 \mathbf{0} & z\cdot \mathbf{I}_{n_C} & \mathbf{0}  & \mathbf{C}  \\
\mathbf{B}^{\rm T}  & \mathbf{0}  & y\cdot \mathbf{I}_{d_B} & \mathbf{0}  \\
\mathbf{A}^{\rm T}  & \mathbf{C}^{\rm T} & \mathbf{0}  & x\cdot  \mathbf{I}_d
\end{matrix}\right]  \ \Bigg\vert_{y=z=0}.
\end{equation*}
This expression can be further simplified, especially in the contexts of the GCSS problem (Problem \ref{pr1}) and submatrix selection problem (Problem \ref{pr2}). This allows us to efficiently compute $P_{k,r}(x;\mathbf{A},\mathbf{B},\mathbf{C})$ and its largest root.

In Section \ref{section: algorithm for GCRSS}, we introduce a deterministic polynomial time algorithm for the spectral norm case of the GCRSS problem; see Algorithm \ref{alg-spr-GCRSS}. The following theorem establishes the approximation error guarantee and computational complexity of Algorithm \ref{alg-spr-GCRSS}.

\begin{theorem}\label{mth:alg:spr}
Let  $\mathbf{A}\in\mathbb{R}^{n\times d}$, $\mathbf{B}\in\mathbb{R}^{n\times d_B}$ and $\mathbf{C}\in\mathbb{R}^{n_C\times d}$. 
Let $k$ and $r$ be two integers satisfying that  $0\leq k\leq \mathrm{rank}(\mathbf{B})$ and $0\leq r\leq \mathrm{rank}(\mathbf{C})$.
Algorithm \ref{alg-spr-GCRSS} outputs two subsets $\widehat{S}=\{j_1,\ldots,j_k\}\subset [d_B]$ and $\widehat{R}=\{j_1',\ldots,j_r'\}\subset [n_C]$, such that
\begin{equation}\label{alg:eq1-GCRSS}
\Vert(\mathbf{I}_n-\mathbf{B}_{:, \widehat{S}}\mathbf{B}_{:, \widehat{S}}^{\dagger})\mathbf{A}(\mathbf{I}_d-\mathbf{C}_{\widehat{R}, :}^{\dagger}\mathbf{C}_{\widehat{R}, :}) \Vert_{2}^2\leq  2(k+r)\eta+\mathrm{maxroot}\ P_{k,r}(x;\mathbf{A},\mathbf{B},\mathbf{C}),
\end{equation}
where $\eta>0$ is an error parameter.
The running time of Algorithm \ref{alg-spr-GCRSS} is
\begin{equation*}
O\Big(dn^2+r\cdot n_C\cdot  d(n+n_C)+(k\cdot d_B+r\cdot n_C)\cdot \big((n+n_C)^{w+2}+d^2\log\frac{1}{\eta}\big)\Big),	
\end{equation*} 
where $w \in  (2, 2.373)$ is the matrix multiplication exponent.

\end{theorem}

In the following, we focus on two specific instances of Problem \ref{pr3}: the case where $r=0$ (discussed in Section \ref{sec:122}) and the case where $\mathbf{B}=\mathbf{C}=\mathbf{I}_d$  (examined in Section \ref{sec:123}). As previously outlined in Section \ref{section: intro}, the former simplifies Problem \ref{pr3} to the GCSS problem, while the latter transforms it into the submatrix selection problem.
For these two cases, we provide an estimate for the largest root of $P_{k,r}(x;\mathbf{A},\mathbf{B},\mathbf{C})$ and apply the result of Theorem \ref{mth1-spr} to both the GCSS and submatrix selection problems. 
\textcolor{black}{
 Moreover, Algorithm \ref{alg-spr-GCRSS} can be effectively applied to both the GCSS and submatrix selection problems. 
 Notably, the time complexity can be further reduced because the related expected polynomial has a simplified expression in these two cases. 
 For a detailed discussion, please refer to Remark \ref{re:41}.}

\subsubsection{Generalized Column Subset Selection}\label{sec:122}

 We begin by considering the case where $r=0$. In this case we study the expected polynomial $P_{k}(x;\mathbf{A},\mathbf{B})$, as defined in Definition \ref{def2}. We show that $P_{k}(x;\mathbf{A},\mathbf{B})$ can be related to the additive convolution of multi-affine polynomials introduced in \cite{ravi4}. In order to estimate the largest root of $P_{k}(x;\mathbf{A},\mathbf{B})$, we utilize the multivariate barrier function method, as introduced in \cite{inter2,SS12}.

The following theorem is our main result for the spectral norm case of the GCSS.  To the best of our knowledge, this is the first provable reconstruction bound for the spectral norm of a residual matrix.

\begin{theorem}\label{main thm-spr}
Let $\mathbf{A}$ be a matrix in $\mathbb{R}^{n\times d}$, and let $\mathbf{B}\in\mathbb{R}^{n\times d_B}$ be a rank-$m$ matrix.  
Let $\mathbf{B}\mathbf{B}^{\rm T}=\mathbf{U}\cdot \mathrm{diag}(b_1,\ldots,b_m)\cdot \mathbf{U}^{\rm T}$ be the singular value decomposition of $\mathbf{B}\mathbf{B}^{\rm T}$, where $\mathbf{U}\in\mathbb{R}^{n\times m}$ satisfies $\mathbf{U}^{\rm T}\mathbf{U}=\mathbf{I}_m$
and $b_i$ is the $i$-th largest squared singular value of $\mathbf{B}$. 
Let $\alpha$ be the largest diagonal element of the matrix $\frac{\mathbf{U}^{\rm T}\mathbf{A}\mathbf{A}^{\rm T}\mathbf{U}}{\Vert \mathbf{U}^{\rm T}\mathbf{A}\mathbf{A}^{\rm T}\mathbf{U}\Vert_2}$. 
Set 
\begin{equation*}
\delta_{k}:= \frac{\sum_{S\subset[m-1],\vert S\vert=k} \mathbf{b}^{S}}{	 \sum_{S\subset[m],\vert S\vert=k} \mathbf{b}^{S}},
\end{equation*}
where  $k\in[m-1]$.
If $\delta_{k}\leq(1-\sqrt{\alpha})^2$, then we can iteratively select a subset $\widehat{S}\subset [d_B]$ of size $k$ such that $\mathrm{rank}(\mathbf{B}_{:,\widehat{S}})=k$ and
\begin{equation}\label{mth2:bound}
\Vert\mathbf{A}-\mathbf{B}_{:, \widehat{S}}\mathbf{B}_{:, \widehat{S}}^{\dagger}\mathbf{A} \Vert_{2}^2
\leq 
\Vert\mathbf{A}-\mathbf{B}_{}\mathbf{B}_{}^{\dagger}\mathbf{A} \Vert_{2}^2
+\varepsilon\cdot \Vert \mathbf{B}_{}\mathbf{B}_{}^{\dagger}\mathbf{A}\Vert_2^2,
\end{equation}	
where
\begin{equation}\label{eq:epsilon}
\varepsilon:=\big(\sqrt{\alpha\cdot t_{}}+\sqrt{(1-\alpha)(1-t_{})}\big )^2
\quad\text{and}\quad  t_{}:=\big(1-\sqrt{\delta_{k}}\big)^2.
\end{equation}
\end{theorem}

In the following we make some remarks on the result of Theorem \ref{main thm-spr}.
\begin{remark}
The proof of Theorem \ref{main thm-spr} relies on representing the expected polynomial in terms of the multivariate differential operator $\prod_{i=1}^m \partial_{z_i}$ and then estimating its largest root using the multivariate barrier method.
However, this approach can produce loose estimates when $\alpha$ is large.
Thus, we impose the condition $\delta_{k}\leq(1-\sqrt{\alpha})^2$ 
  to ensure that the upper bound \eqref{mth2:bound} remains well-controlled. More precisely, this condition guarantees that the upper bound \eqref{mth2:bound} does not exceed $\|\mathbf{A}\|_2^2$  when $\mathbf{B}_{}\mathbf{B}_{}^{\dagger}\mathbf{A}=\mathbf{A}$. 
In this case, our upper bound reduces to the simpler form $\varepsilon\cdot \Vert \mathbf{A}\Vert_2^2$.
Observe that $\varepsilon$ decreases from $1$ to $\alpha$ as  $\delta_k$ decreases from  $(1-\sqrt{\alpha})^2$ to $0$, so the upper bound $\varepsilon\cdot \Vert \mathbf{A}\Vert_2^2$ does not exceed $\Vert \mathbf{A}\Vert_2^2$ when $\delta_{k}\leq(1-\sqrt{\alpha})^2$.
However, when $\mathbf{B}_{}\mathbf{B}_{}^{\dagger}\mathbf{A}\neq \mathbf{A}$, the condition $\delta_k \leq (1 - \sqrt{\alpha})^2$ is not sufficient. Due to the additional term $\Vert\mathbf{A}-\mathbf{B}_{}\mathbf{B}_{}^{\dagger}\mathbf{A} \Vert_{2}^2$,  $\delta_{k}$ must be much smaller than $(1-\sqrt{\alpha})^2$ to ensure the bound \eqref{mth2:bound} remains under $\Vert\mathbf{A}\Vert_2^2$.
This limitation arises from the proof technique, and we leave this issue for future research.

\end{remark}

\begin{remark}
When the target matrix $\mathbf{A}$ and the source matrix $\mathbf{B}$ are specified, we can consider $\alpha\in [0,1]$ as a fixed number.
Using the assumption that $b_1\geq b_2\geq \cdots\geq b_m>0$, we can prove that
\begin{equation}\label{estimate deltak}
1-\frac{k}{m}
\leq \delta_k
\leq \min\Big\{1, \kappa(\mathbf{B})^2\cdot (1-\frac{k}{m})\Big \},
\end{equation}
where $\kappa(\mathbf{B})=\| \mathbf{B}\|_2\cdot \| \mathbf{B}^{\dagger}\|_2=\sqrt{\frac{b_1}{b_m}}$ represents the generalized condition number of $\mathbf{B}$ 
{\rm (}please refer to Lemma \ref{lemma-estimate deltak} for the proof of \eqref{estimate deltak}{\rm )}.
Therefore, if $\kappa(\mathbf{B})^2\cdot (1-\frac{k}{m})\leq (1-\sqrt{\alpha})^2$, i.e.,
\begin{equation*}
\frac{k}{m}\geq 1-\frac{(1-\sqrt{\alpha})^2}{\kappa(\mathbf{B})^2},	
\end{equation*}
then the condition $\delta_k\leq (1-\sqrt{\alpha})^2$ is satisfied.
Furthermore, combining \eqref{estimate deltak} with the fact that 
$\alpha(1-\alpha)\leq \frac14$ and $0\leq 1-t=2 \sqrt{\delta_k}-\delta_k\leq 2 \sqrt{\delta_k}$, 
we can estimate $\varepsilon$,  which is defined in (\ref{eq:epsilon}), as follows:
\begin{equation*}
\begin{aligned}
\varepsilon
&=\alpha+(1-2\alpha)(1-t)+2\sqrt{\alpha(1-\alpha)t(1-t)}\\
&\leq \alpha+(1-t)+\sqrt{1-t}
\leq \alpha+4\cdot \delta_k^{1/4}
\leq \alpha+4\sqrt{\kappa(\mathbf{B})}\cdot \big(1-\frac{k}{m}\big)^{1/4}.
\end{aligned}
\end{equation*} 
Consequently, we can derive a more relaxed yet more straightforward upper bound from \eqref{mth2:bound}:
\begin{equation}\label{newupperbound}
\Vert\mathbf{A}-\mathbf{B}_{:, \widehat{S}}\mathbf{B}_{:, \widehat{S}}^{\dagger}\mathbf{A} \Vert_{2}^2
\leq 
\Vert\mathbf{A}-\mathbf{B}_{}\mathbf{B}_{}^{\dagger}\mathbf{A} \Vert_{2}^2
+\Big(
\alpha+4  \sqrt{\kappa(\mathbf{B})}\cdot \big(1-\frac{k}{m}\big)^{1/4}
\Big)\cdot \Vert \mathbf{B}_{}\mathbf{B}_{}^{\dagger}\mathbf{A}\Vert_2^2.
\end{equation}
%In particular, when $\mathbf{B}_{}\mathbf{B}_{}^{\dagger}\mathbf{A}=\mathbf{A}$, the upper bound \eqref{newupperbound} simplifies to:
%\begin{equation}\label{newupperbound2}
%\Vert\mathbf{A}-\mathbf{B}_{:, \widehat{S}}\mathbf{B}_{:, \widehat{S}}^{\dagger}\mathbf{A} \Vert_{2}^2
%\leq 
%\Big(\alpha+4  \sqrt{\kappa(\mathbf{B})}\cdot \big(1-\frac{k}{m}\big)^{1/4}\Big)\cdot \Vert \mathbf{A}\Vert_2^2.
%\end{equation}
%For sufficiently large $m$, as $k$ increases to $m-1$, the above bound decreases and approaches  $\alpha \cdot \Vert \mathbf{A}\Vert_2^2$. 
%This behavior aligns with the actual situation, 
%as we will demonstrate that $\alpha \cdot \Vert \mathbf{A}\Vert_2^2$ serves as a tight upper bound on the minimal residual $\Vert\mathbf{A}-\mathbf{B}_{:, S}\mathbf{B}_{:, S}^{\dagger}\mathbf{A} \Vert_{2}^2$ over all $(m-1)$-subsets $S\subset[d_B]$  in certain cases (see Section \ref{section5:alpha} \textcolor{red}{please make clear which result says that }).
%This highlights the crucial role of $\alpha \cdot \Vert \mathbf{A}\Vert_2^2$ in our upper bound \eqref{newupperbound2}.

\end{remark}

\begin{remark} 
Unfortunately, when $\mathbf{B}=\mathbf{A}$, the condition $\delta_k\leq (1-\sqrt{\alpha})^2$
becomes unattainable. This is due to the fact that the matrix $\frac{\mathbf{U}^{\rm T}\mathbf{A}\mathbf{A}^{\rm T}\mathbf{U}}{\Vert \mathbf{U}^{\rm T}\mathbf{A}\mathbf{A}^{\rm T}\mathbf{U}\Vert_2}$
  becomes a diagonal matrix when $\mathbf{B}=\mathbf{A}$, resulting in $\alpha=1$ and $(1-\sqrt{\alpha})^2=0$. Meanwhile, $\delta_k$
  is strictly positive for all $k\in[m-1]$.
We addressed this issue in a separate paper \cite{CXX23} by simplifying $P_{k}(x;\mathbf{A},\mathbf{A})$
  in terms of the univariate differential operator $\partial_{x}^k$. Specifically, we showed that
\begin{equation*}
P_{k}(x;\mathbf{A},\mathbf{A})=\frac{(-1)^k}{k!}\cdot\mathcal{R}_{x,d}^+\cdot  \partial_x^{k}\cdot  \mathcal{R}_{x,d}^+ \det[x\cdot\mathbf{I}_d-  \mathbf{A}^{\rm T}\mathbf{A}].
\end{equation*}
Here, the flip operator $\mathcal{R}_{x,d}^+$ is defined as 
$\mathcal{R}_{x,d}^+\ p(x):=x^d\cdot p(1/x)$
 for any polynomial $p(x)$ of degree at most $d$ 
 (see also Remark \ref{CSS-GCSS}).
This simplified form allowed us to apply the more accurate univariate barrier method for estimating the largest root of $P_{k}(x;\mathbf{A},\mathbf{A})$. Further details can be found in \cite{CXX23}.

\end{remark}

\begin{remark}
Consider the case $\mathbf{B}=\mathbf{I}_n$.
In this case, we have $\Vert\mathbf{A}-\mathbf{B}_{:, {S}}\mathbf{B}_{:, {S}}^{\dagger}\mathbf{A} \Vert_{2}^2=\Vert\mathbf{A}_{{S}^C,:} \Vert_{2}^2=\Vert (\mathbf{A}\mathbf{A}^{\rm T})_{{S}^C,{S}^C} \Vert_2$ for any subset $S\subset[n]$. Thus, the GCSS problem (Problem \ref{pr1}) is transferred into the principal submatrix selection problem.
For convenience, we assume that $\|\mathbf{A}\|_2=1$.
Note that $\delta_k=1-\frac{k}{n}$, and $\alpha$ becomes the largest diagonal element of $\mathbf{A}\mathbf{A}^{\rm T}\in\mathbb{R}^{n\times n}$ by taking $\mathbf{U}=\mathbf{I}_n$.
Therefore, the upper bound \eqref{newupperbound} simplifies to
\begin{equation}\label{special case of main thm-spr-oct2}
\Vert (\mathbf{A}\mathbf{A}^{\rm T})_{\widehat{S}^C,\widehat{S}^C} \Vert_2=\Vert\mathbf{A}-\mathbf{B}_{:, \widehat{S}}\mathbf{B}_{:, \widehat{S}}^{\dagger}\mathbf{A} \Vert_{2}^2
\leq \alpha+4\cdot \big(1-\frac{k}{n}\big)^{1/4},
\end{equation}
where $k\in[n-1]$ satisfies $1-\frac{k}{n}\leq (1-\sqrt{\alpha})^2$.
As $k$ increases to $n-1$, the above bound decreases to $\alpha+\frac{4}{n^{1/4}}$.
This behavior aligns with the observation that $\alpha $ is the largest diagonal element of $\mathbf{A}\mathbf{A}^{\rm T}\in\mathbb{R}^{n\times n}$, which serves as an upper bound on $\Vert (\mathbf{A}\mathbf{A}^{\rm T})_{{S}^C,{S}^C} \Vert_2$ over all $(n-1)$-subsets $S\subset[n]$.
In particular, if the diagonal elements of $\mathbf{A}\mathbf{A}^{\rm T}$ are all equal and independent with $n$, then the term $\alpha$ is indispensable in \eqref{special case of main thm-spr-oct2}, because we have $\Vert (\mathbf{A}\mathbf{A}^{\rm T})_{{S}^C,{S}^C} \Vert_2=\alpha $ for each $(n-1)$-subset $S\subset[n]$.
However, the upper bound \eqref{special case of main thm-spr-oct2} can be further improved by leveraging the fact that the expected polynomial $P_{k}(x;\mathbf{A},\mathbf{I}_n)$ can also be expressed in terms of the univariate differential operator $\partial_x^k$:
\begin{equation*}
P_k(x;\mathbf{A},\mathbf{I}_n)
=x^{d-n+k}\sum_{S\subset[n],|S|=k} \det[x\cdot \mathbf{I}_{n-k}-(\mathbf{A}\mathbf{A}^{\rm T})_{S^C,S^C}]
=\frac{1}{k!}\cdot x^{d-n+k}\cdot \partial_x^k \det[x\cdot \mathbf{I}_{n}-\mathbf{A}\mathbf{A}^{\rm T}].	
\end{equation*}
Using the  univariate barrier method, Ravichandran \cite[Theorem 1.7]{ravi1} derived a tighter bound 
\begin{equation*}\label{special case of main thm-spr-oct22}
\Vert (\mathbf{A}\mathbf{A}^{\rm T})_{\widehat{S}^C,\widehat{S}^C} \Vert_2
\leq \Bigg(\sqrt{\frac{k}{n}\cdot \beta}+\sqrt{(1-\beta)(1-\frac{k}{n})}\Bigg)^2 
= \beta+O\Big((1-\frac{k}{n})^{1/2}\Big),
\end{equation*} 
where $\beta=\frac{\mathrm{Tr}(\mathbf{A}\mathbf{A}^{\rm T})}{n}$.
Nevertheless, it is unknown whether the expected polynomial $P_{k}(x;\mathbf{A},\mathbf{B})$ can be expressed using the univariate differential operator $\partial_x^k$ when $\mathbf{B} \neq \mathbf{A}$ and $\mathbf{B} \neq \mathbf{I}_n$.
Therefore, despite certain limitations, Theorem \ref{main thm-spr} covers a broader range of matrix classes for the source matrix $\mathbf{B}$.

\end{remark}

\subsubsection{Submatrix Selection}\label{sec:123}

We then proceed to examine the case where $\mathbf{A}\in\mathbb{R}^{d\times d}$ is a square matrix and $\mathbf{B}=\mathbf{C}=\mathbf{I}_d$. Furthermore, we specifically focus on the case where $r=k$. Under these conditions, we demonstrate that the expected polynomial $P_{k,k}(x;\mathbf{A},\mathbf{I}_d,\mathbf{I}_d)$ can be expressed using the Laguerre derivative operator $\partial_x\cdot x\cdot \partial_x$, where $\partial_{x}$ denotes the derivative operator $\partial\slash\partial_{x}$. To be precise, we establish the following proposition (see Proposition \ref{P-exp-I}):
\begin{equation*}\label{Pkk-Laguerre}
P_{k,k}(x;\mathbf{A},\mathbf{I}_d,\mathbf{I}_d)=\frac{1}{(k!)^2}\cdot x^k\cdot   (\partial_x\cdot x\cdot \partial_x)^k\  \det[x\cdot \mathbf{I}_{d}-\mathbf{A}^{\rm T}\mathbf{A}].
\end{equation*}

The Laguerre derivative operator $\partial_x\cdot x\cdot \partial_x$ exhibits a significant connection with the asymmetric additive convolution introduced by Marcus, Spielman, and Srivastava in \cite{inter5}. In our analysis, we will adopt their barrier function approach to estimate the largest root of $P_{k,k}(x;\mathbf{A},\mathbf{I}_d,\mathbf{I}_d)$. The following  theorem presents our primary result for the submatrix selection problem.

\begin{theorem}\label{main thm-submatrix}
Let $\mathbf{A}$ be a square matrix in $\mathbb{R}^{d\times d}$ satisfying $\Vert\mathbf{A}\Vert_2\leq 1$, and let $\beta_{}:=\frac{1}{d}\Vert\mathbf{A}\Vert_{\rm F}^2=\frac{1}{d}\sum_{i=1}^d\lambda_i\in[0,1]$, where $\lambda_i$ is the $i$-th largest eigenvalue of $\mathbf{A}^{\rm T}\mathbf{A}$. For any positive integer $k<\frac{d}{\beta+1}$, there exist two subsets $\widehat{S},\widehat{R}\subset[d]$ of size $k$, such that
\begin{equation}\label{mth2:bound-submatrix}
\Vert\mathbf{A}_{\widehat{S},\widehat{R}}\Vert_2
\leq \Big(1-\beta\Big)\cdot\frac{k}{d}+2\sqrt{\Big(1-\frac{k}{d}\Big)\cdot \frac{k}{d}\cdot \beta}.
\end{equation}	
\end{theorem}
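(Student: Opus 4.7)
The plan is to reduce the problem, via Theorem \ref{mth1-spr}, to an estimate on the largest root of a polynomial obtained by applying the Laguerre derivative operator $L:=\partial_x\cdot x\cdot\partial_x$ to the characteristic polynomial of $\mathbf{A}^{\mathrm{T}}\mathbf{A}$, and then to execute the barrier function argument developed by Marcus, Spielman, and Srivastava for the asymmetric additive convolution.

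First I would apply Theorem \ref{mth1-spr} with $\mathbf{B}=\mathbf{C}=\mathbf{I}_d$ and size parameters $(k',r')=(d-k,d-k)$. Because $(\mathbf{I}_d-\mathbf{I}_{:,S}\mathbf{I}_{:,S}^{\dagger})\mathbf{A}(\mathbf{I}_d-\mathbf{I}_{R,:}^{\dagger}\mathbf{I}_{R,:})$ and $\mathbf{A}_{S^C,R^C}$ share their nonzero entries and hence their spectral norms, the iteratively constructed $(d-k)$-subsets $\widehat{S}',\widehat{R}'$ have complements $\widehat{S}:=[d]\setminus\widehat{S}'$ and $\widehat{R}:=[d]\setminus\widehat{R}'$, both of size $k$, satisfying
\begin{equation*}
\Vert\mathbf{A}_{\widehat{S},\widehat{R}}\Vert_2^2 \;\le\; \mathrm{maxroot}\ P_{d-k,\,d-k}(x;\mathbf{A},\mathbf{I}_d,\mathbf{I}_d).
\end{equation*}
By identity (\ref{Pkk-Laguerre}) applied with $d-k$ in place of $k$, this expected polynomial equals $\tfrac{x^{d-k}}{((d-k)!)^2}\,L^{d-k}p(x)$, where $p(x)=\det[x\mathbf{I}_d-\mathbf{A}^{\mathrm{T}}\mathbf{A}]=\prod_{i=1}^d(x-\lambda_i)$. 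The factor $x^{d-k}$ contributes only zero roots, so it suffices to upper-bound the largest root of $q(x):=L^{d-k}p(x)$.

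The crux of the proof is a barrier-function analysis tailored to $L$, in the spirit of the Marcus--Spielman--Srivastava treatment of the asymmetric additive convolution \cite{inter5}. Unlike the first-order differentiation operator (which governs symmetric additive convolutions and is controlled by the single barrier $\Phi_p(x)=\sum_{i=1}^d 1/(x-\lambda_i)$), the second-order operator $\partial_x\cdot x\cdot\partial_x$ is naturally monitored by a \emph{pair} of coupled barrier functions, corresponding to the two differentiation steps sandwiching the multiplication by $x$. One establishes a one-step monotonicity lemma of the form: for an appropriately chosen shift $\delta>0$, if both barrier functions at $x$ lie below prescribed thresholds, then at the shifted point $x-\delta$ the two barriers for $Lp$ also lie below the same thresholds, which in particular forces $x-\delta$ to be above the largest root of $Lp$. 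Iterating this lemma $d-k$ times, with the per-step shift optimized against the remaining iteration count, produces the required upper bound on the largest root of $q$.

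Initializing the iteration at a point $x_0$ just above $\lambda_1\le 1$, the initial values of both barriers can be controlled uniformly in terms of the single aggregate $\beta=\tfrac{1}{d}\sum_i\lambda_i$; the hypothesis $k<d/(\beta+1)$ is precisely what ensures that these initial barriers lie below the thresholds and that the per-step shift remains valid throughout all $d-k$ iterations. Taking the square root at the end, to pass from the bound on $\Vert\mathbf{A}_{\widehat{S},\widehat{R}}\Vert_2^2$ to one on $\Vert\mathbf{A}_{\widehat{S},\widehat{R}}\Vert_2$, is expected to yield exactly $(1-\beta)\tfrac{k}{d}+2\sqrt{(1-\tfrac{k}{d})\tfrac{k}{d}\beta}$; the appearance of a square-root term in $\beta$ reflects the coupling of the two barriers and is a signature of the asymmetric (as opposed to symmetric) convolution. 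The main obstacle will be identifying the correct pair of barriers adapted to $L$ and proving the monotonicity lemma cleanly; once that is in place, the accumulated-shift calculation is a routine but delicate algebraic optimization.
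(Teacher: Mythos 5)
Your reduction is exactly right and matches the paper: apply Theorem~\ref{mth1-spr} with $\mathbf{B}=\mathbf{C}=\mathbf{I}_d$ and size parameters $(d-k,d-k)$, pass through Proposition~\ref{P-exp-I} to arrive at $\mathrm{maxroot}\,(\partial_x\,x\,\partial_x)^{d-k}\det[x\mathbf{I}_d-\mathbf{A}^{\rm T}\mathbf{A}]$, and then take complements. That part is correct.

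The gap is in your description of the barrier argument for the Laguerre operator. You propose a \emph{pair of coupled barrier functions} ``corresponding to the two differentiation steps sandwiching the multiplication by $x$,'' together with a bespoke one-step monotonicity lemma coupling them. Neither the paper nor Marcus--Spielman--Srivastava does this. The actual mechanism is a \emph{change of variable}: one applies the substitution operator $\mathbb{S}(p)(x):=p(x^2)$ and works with the ordinary single barrier $\Phi_{p(x^2)}(b)=\sum_i\bigl(\tfrac{1}{b-\sqrt{\lambda_i}}+\tfrac{1}{b+\sqrt{\lambda_i}}\bigr)$. The one-step decrement lemma, cited from \cite[Lemma~4.18]{inter5}, says precisely that
\begin{equation*}
\mathrm{maxroot}\ \mathbb{U}_{\alpha}\mathbb{S}(\partial_x\,x\,\partial_x\,p)\ \leq\ \mathrm{maxroot}\ \mathbb{U}_{\alpha}\mathbb{S}(p)\ -\ 2\alpha,
\end{equation*}
so the Laguerre operator contracts the squared variable by $2\alpha$ per application under the transformed barrier. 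Iterating $l=d-k$ times gives $\mathrm{maxroot}\,(\partial_x\,x\,\partial_x)^l p < (b-2l\alpha)^2$ with $\alpha^{-1}=\Phi_{p(x^2)}(b)$. The square root in the final bound comes from this $x\mapsto x^2$ substitution, not from any ``coupling of two barriers.''

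There is a second omission: you claim the initial barrier value at $x_0$ ``just above $\lambda_1\leq 1$'' is controlled uniformly by $\beta$, but give no mechanism. The paper controls $\Phi_{p(x^2)}(b)$ for $b>1$ via Ravichandran's moment estimate (\cite[Lemma~4.5]{ravi1}) applied to the $2d$ numbers $\{\pm\sqrt{\lambda_i}\}$; because these come in $\pm$ pairs, their mean vanishes and the bound depends only on the second moment $\beta=\tfrac{1}{d}\sum_i\lambda_i$, yielding $\Phi_{p(x^2)}(b)\leq \tfrac{2d(b+\beta-1)}{(b-1)(b+\beta)}$. The constraint $k<d/(\beta+1)$ is not really about keeping barriers below thresholds along the iteration; it is equivalent to $l>\tfrac{\beta}{\beta+1}d$, which is exactly what guarantees that the optimal $b_*$ in the final minimization of $b-2l/\Phi_{p(x^2)}(b)$ satisfies $b_*>1$, making the optimization valid. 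Without these two ingredients — the $\mathbb{S}$ substitution with the cited MSS lemma, and Ravichandran's moment lemma applied to $\pm\sqrt{\lambda_i}$ — the proof as you sketched it would not close.
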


\begin{remark}
The condition $k<\frac{d}{\beta+1}$
  ensures that the upper bound in \eqref{mth2:bound-submatrix} does not exceed 1. 
 Observe that when  $\frac{k}{d}$ is significantly small, the upper bound \eqref{mth2:bound-submatrix} becomes $O(\sqrt{\beta\cdot \frac{k}{d}})$. Consequently, we can restate Theorem \ref{main thm-submatrix} as follows: Given any square matrix $\mathbf{A}\in \mathbb{R}^{d\times d}$ and a sufficiently small positive number $\varepsilon$, there exist two subsets $\widehat{S}$ and $\widehat{R}$ of size $O(d\cdot \varepsilon^2)$ such that $\Vert\mathbf{A}_{\widehat{S},\widehat{R}}\Vert_2\leq \varepsilon\cdot \Vert\mathbf{A}\Vert_2$.
\end{remark}

\subsection{Related Work}

\subsubsection{Generalized Column Subset Selection}
%\label{section: Generalized column subset selection}

Let $\mathbf{A}=[\mathbf{a}_1,\ldots,\mathbf{a}_d]\in\mathbb{R}^{n\times d}$ and $\mathbf{B}=[\mathbf{b}_1,\ldots,\mathbf{b}_{d_B}]\in\mathbb{R}^{n\times d_B}$. Most of the prior work focuses on the Frobenius norm case of GCSS. Farahat, Ghodsi, and Kamel \cite{FGK13} were the first to formalize GCSS into Problem \ref{pr1}. They demonstrated that for every subset $S\subset[d_B]$ and for every $i\in[d_B]\backslash S$,
\begin{equation}\label{related work:eq1}
\Vert \mathbf{A}-\mathbf{B}_{:,S\cup\{i\}}\mathbf{B}_{:,S\cup\{i\}}^{\dagger}\mathbf{A} \Vert_{\rm F}^2=\Vert \mathbf{A}-\mathbf{B}_{:,S}\mathbf{B}_{:,S}^{\dagger}\mathbf{A} \Vert_{\rm F}^2-\frac{\Vert\mathbf{A}^{\rm T}\mathbf{Q}_S\mathbf{b}_i\Vert^2}{\Vert\mathbf{Q}_S\mathbf{b}_i\Vert^2},
\end{equation}
where $\mathbf{Q}_S:=\mathbf{I}_n-\mathbf{B}_{:,S}\mathbf{B}_{:,S}^{\dagger}$. Based on \eqref{related work:eq1}, they proposed a greedy algorithm that selects one column $\mathbf{b}_j$ at each iteration such that
\begin{equation*}
j=\mathop{\rm argmin}_{i\in[d_B]\backslash S}	\Vert \mathbf{A}-\mathbf{B}_{:,S\cup\{i\}}\mathbf{B}_{:,S\cup\{i\}}^{\dagger}\mathbf{A} \Vert_{\rm F}^2=\mathop{\rm argmax}_{i\in[d_B]\backslash S} \frac{\Vert\mathbf{A}^{\rm T}\mathbf{Q}_S\mathbf{b}_i\Vert^2}{\Vert\mathbf{Q}_S\mathbf{b}_i\Vert^2}.
\end{equation*}
The theoretical performance of this greedy algorithm was subsequently analyzed and presented in \cite{ABFMRZ16}.
Let $S_{\mathrm{opt},r}$ be an $r$-subset of $[d_B]$ that minimizes the Frobenius norm $\Vert\mathbf{A}-\mathbf{B}_{:,S}\mathbf{B}_{:,S}^{\dagger}\mathbf{A} \Vert_{\rm F}$ with $\vert S\vert=r$. Let $\widehat{S}$ be the $k$-subset of $[d_B]$ output by the above greedy algorithm after $k$ iterations, where $k=\frac{16r}{\varepsilon\cdot \sigma}$ is the sampling size, $\varepsilon>0$ is an error parameter and $\sigma$ is the smallest squared singular value of $\mathbf{B}_{:,S_{\mathrm{opt},r}}$. 
Then the authors of \cite[Theorem 1]{ABFMRZ16} showed that
\begin{equation*}
\Vert \mathbf{B}_{:,\widehat{S}} \mathbf{B}_{:,\widehat{S}}^{\dagger}\mathbf{A}\Vert_{\rm F}^2\geq (1-\varepsilon)	\Vert \mathbf{B}_{:,S_{\mathrm{opt},r}} \mathbf{B}_{:,S_{\mathrm{opt},r}}^{\dagger}\mathbf{A}\Vert_{\rm F}^2,
\end{equation*}
which implies the following error bound on the residual $\Vert \mathbf{A}-\mathbf{B}_{:,\widehat{S}} \mathbf{B}_{:,\widehat{S}}^{\dagger}\mathbf{A}\Vert_{\rm F}^2$:
\begin{equation*}\label{related work:eq23}
\begin{aligned}
\Vert \mathbf{A}-\mathbf{B}_{:,\widehat{S}} \mathbf{B}_{:,\widehat{S}}^{\dagger}\mathbf{A}\Vert_{\rm F}^2=\Vert \mathbf{A}\Vert_{\rm F}^2-\Vert \mathbf{B}_{:,\widehat{S}} \mathbf{B}_{:,\widehat{S}}^{\dagger}\mathbf{A}\Vert_{\rm F}^2&\leq \Vert \mathbf{A}\Vert_{\rm F}^2-(1-\varepsilon)	\Vert \mathbf{B}_{:,S_{\mathrm{opt},r}} \mathbf{B}_{:,S_{\mathrm{opt},r}}^{\dagger}\mathbf{A}\Vert_{\rm F}^2\\
&=\varepsilon\Vert \mathbf{A}\Vert_{\rm F}^2+(1-\varepsilon)	\Vert \mathbf{A}-\mathbf{B}_{:,S_{\mathrm{opt},r}} \mathbf{B}_{:,S_{\mathrm{opt},r}}^{\dagger}\mathbf{A}\Vert_{\rm F}^2.	
\end{aligned}
\end{equation*}
Recently, Ordozgoiti-Matakos-Gionis \cite[Theorem 5.1]{OMG22} obtained a different type of error bound by introducing the concept of generalized leverage scores. They proposed a deterministic algorithm that outputs a subset $\widehat{S}\subset[d_B]$ such that
\begin{equation}\label{related work:eq2}
\Vert \mathbf{B}_{:,\widehat{S}} \mathbf{B}_{:,\widehat{S}}^{\dagger}\mathbf{A}\Vert_{\rm F}^2\geq (1-\delta)(1-\theta)	\Vert \mathbf{A}\Vert_{\rm F}^2,
\end{equation}
where $\delta>0$ and $\theta>0$ are two error parameters. The size of the subset $\widehat{S}$ depends on $\delta$, $\varepsilon$ and the decay of the generalized leverage scores. Note that \eqref{related work:eq2} implies the following approximation error:
\begin{equation*}\label{related work:eq22}
\Vert \mathbf{A}-\mathbf{B}_{:,\widehat{S}} \mathbf{B}_{:,\widehat{S}}^{\dagger}\mathbf{A}\Vert_{\rm F}^2=\Vert \mathbf{A}\Vert_{\rm F}^2-\Vert \mathbf{B}_{:,\widehat{S}} \mathbf{B}_{:,\widehat{S}}^{\dagger}\mathbf{A}\Vert_{\rm F}^2\leq \big(1-(1-\delta)(1-\theta)\big)	\Vert \mathbf{A}\Vert_{\rm F}^2.
\end{equation*}

The above mentioned results are specifically relevant to the Frobenius norm case of GCSS. To our knowledge, there has been no theoretical analysis conducted on the spectral norm case of GCSS. Theorem \ref{main thm-spr} presents the first provable error bound for the spectral norm of the residual matrix.

\subsubsection{Submatrix Selection}
%\label{related work:Submatrix selection}

{\bf Row submatrix selection problem:}
If we take $r=d$ and $\xi=2$, then Problem \ref{pr2} becomes the problem of selecting a row submatrix $\mathbf{A}_{S,:}\in\mathbb{R}^{k\times d}$ with the smallest spectral norm.
For the case where $\mathbf{A}\in\mathbb{R}^{d\times d}$ is a square matrix,
Kashin and Tzafriri (see \cite[Theorem 2.5]{Ver01} and \cite{KT93,RV07}) proved that there exists a subset $S\subset[d]$ of size $k$ such that
\begin{equation*}
\Vert \mathbf{A}_{S,:}\Vert_2\leq O\bigg(\sqrt{\frac{k}{d}}\cdot \Vert \mathbf{A}\Vert_2+\frac{1}{\sqrt{d}}\cdot \Vert \mathbf{A}\Vert_{\rm F} \bigg).
\end{equation*}
Vershynin and Rudelson \cite[Theorem 1.8]{RV07} employed tools from random matrix theory to derive an optimal estimate for the expectation of the spectral norm of $\mathbf{A}_{S,:}$.
In particular, they demonstrate that if $\mathbf{A}\in\mathbb{R}^{d\times d}$ is a square matrix and $S\subset[d]$ is a random subset with an expected cardinality of $k$, then
\begin{equation*}
\mathbb{E} 	\Vert \mathbf{A}_{S,:}\Vert_2\leq O\bigg(\sqrt{\frac{k}{d}}\cdot \Vert \mathbf{A}\Vert_2+\sqrt{\log k}\cdot  \Vert \mathbf{A}\Vert_{(d/k)}\bigg),
\end{equation*}
where $\Vert \mathbf{A}\Vert_{(d/k)}$ denotes the average of $d/k$ biggest Euclidean lengths of the columns of $\mathbf{A}$.

{\bf  Principal submatrix selection problem:}
 If we set $\xi=2$, $n=d$ and $S=R$, then Problem \ref{pr2} simplifies to the principal submatrix selection.
Bourgain-Tzafriri \cite[Corollary 1.2]{BT91} established the existence of a universal constant $c>0$, implying that for any $\varepsilon\in(0,1)$ and any zero-diagonal matrix $\mathbf{A}\in\mathbb{R}^{d\times d}$, there is a subset $S\subset[d]$ of minimum size $c\cdot d \varepsilon^2$ for which $\Vert \mathbf{A}_{S,S} \Vert_2\leq \varepsilon\cdot \Vert \mathbf{A} \Vert_2$ holds true.
Pierre Youssef \cite{You14note} gave a constructive solution to this problem for Hermitian matrices with the constant $c\approx 0.015$. This answered a question proposed by Assaf Naor in \cite{Naor12}. Later, Ravichandran-Srivastava improved the constant to $c=\frac{1}{12}$ in \cite[Corollary 1.5]{ravi3} by using the method of interlacing polynomial. Ravichandran studied the case where $\mathbf{A}$ is positive semidefinite satisfying that $\mathbf{0}_{}\preceq\mathbf{A}\preceq\mathbf{I}_{d}$ \cite[Theorem 1.7]{ravi1}. He showed that for any integer $k\leq (1-\beta)d$, where $\beta=\frac{\mathrm{Tr}[\mathbf{A}]}{d}$, there exists a subset $S\subset[d]$ of size $|S|=k$ such that
\begin{equation}\label{Ravichandran-bound}
\Vert \mathbf{A}_{S,S} \Vert_2\leq \Bigg(\sqrt{(1-\frac{k}{d})\beta}+\sqrt{(1-\beta)\frac{k}{d}}\Bigg)^2.
\end{equation}
The positive semidefinite case of the principal submatrix selection can be directly related to the restricted invertibility principle of Bourgain and Tzafriri. Given a matrix $\mathbf{B}\in\mathbb{R}^{d\times m}$ with $\Vert \mathbf{B}\Vert_2\leq 1$ and given a positive integer $k$, the {\em restricted invertibility problem} aims to find a $k$-subset $S\subset[m]$ such that the smallest singular value of the column submatrix $\mathbf{B}_{:,S}$, denoted by $\sigma_{\min}(\mathbf{B}_{:,S})$, is maximized. Roughly speaking, this problem aims to select $k$ columns such that the selected columns are as ``linear independent'' as possible.  Note that
\begin{equation*}
\sigma_{\min}(\mathbf{B}_{:,S})^2=\lambda_{\min}((\mathbf{B}_{:,S})^{\rm T}\mathbf{B}_{:,S})=\lambda_{\min}((\mathbf{B}^{\rm T}\mathbf{B})_{S,S})	=1-\Vert(\mathbf{I}_m-\mathbf{B}^{\rm T}\mathbf{B})_{S,S}\Vert_2,
\end{equation*}
where $\lambda_{\min}(\mathbf{M})$ denotes the smallest eigenvalue of a symmetric square matrix $\mathbf{M}$. Then, it can be observed that the problem of restricted invertibility can be transformed into the problem of selecting a $k\times k$ principal submatrix from the positive semidefinite matrix $\mathbf{I}_m-\mathbf{B}^{\rm T}\mathbf{B}$, where the goal is to minimize its spectral norm. For more details we refer to \cite{inter3,You2,SS12}.

{\bf Theorem \ref{main thm-spr}, Theorem \ref{main thm-submatrix}, and their relationship with previous results:}
Both Theorem \ref{main thm-spr} and Theorem \ref{main thm-submatrix} can be related to the submatrix selection problem. For convenience, we fix $\|\mathbf{A}\|_2=1$.
In the setting of selecting a $(n-k)\times (n-k)$ principal submatrix of $\mathbf{A}\mathbf{A}^{\rm T}\in\mathbb{R}^{n\times n}$ with bounded spectral norm, Theorem \ref{main thm-spr} provides
a suboptimal bound \eqref{special case of main thm-spr-oct2}, i.e., $\alpha+4\cdot (1-\frac{k}{n})^{1/4}$, where $\alpha$ is the largest diagonal element of $\mathbf{A}\mathbf{A}^{\rm T}$.
In contrast, 
Ravichandran' bound \eqref{Ravichandran-bound} becomes
\begin{equation*}\label{tighterbound}
\Vert (\mathbf{A}\mathbf{A}^{\rm T})_{{S'},{S'}} \Vert_2\leq \Bigg(\sqrt{\frac{k}{n}\cdot \beta}+\sqrt{(1-\beta)(1-\frac{k}{n})}\Bigg)^2,
\end{equation*}
where $S'\subset[n]$ has size $n-k$ and $\beta=\frac{\mathrm{Tr}(\mathbf{A}\mathbf{A}^{\rm T})}{n}$. When $k\to n$, i.e., $1-\frac{k}{n}\to 0$, Ravichandran' bound becomes $\beta+O((1-\frac{k}{n})^{1/2})$, which is better than our bound \eqref{special case of main thm-spr-oct2}. The advantage of Theorem \ref{main thm-spr} lies in its applicability to the generalized column subset selection.

Theorem \ref{main thm-submatrix} establishes the existence of a submatrix $\mathbf{A}_{\widehat{S},\widehat{R}}$ with dimensions $O(d\varepsilon^2)\times O(d\varepsilon^2)$, such that $\Vert\mathbf{A}_{\widehat{S},\widehat{R}}\Vert_2\leq \varepsilon\cdot \Vert\mathbf{A}\Vert_2$. The sampling size in this result, $O(d\varepsilon^2)$, is of the same order as the sampling size obtained in the zero-diagonal case of principal submatrix selection, as demonstrated in Bourgain-Tzafriri's work \cite[Corollary 1.2]{BT91}.
Compared to the results mentioned above, Theorem \ref{main thm-submatrix} applies to a broader case. Specifically, it does not impose any restrictions on the input square matrix $\mathbf{A}$, whereas the Bourgain-Tzafriri's result requires that $\mathbf{A}$ is zero-diagonal.

\section{Preliminaries}
%\label{Preliminaries}

\subsection{Notations}

We first introduce some notations. For any positive integer $n$, we set $[n]:=\{1,2,\ldots,n\}$.
For a set $S\subset[n]$, we use $S^C$ to denote the complement of $S$.
We use $\mathbf{I}_n$ to denote the identity matrix of size $n$, and use $\mathbf{0}_{n\times d}$ to denote the zero matrix of size $n\times d$. Sometimes we omit the subscript and just write $\mathbf{0}$ to represent the zero matrix of appropriate size. 
For a matrix $\mathbf{A}\in\mathbb{R}^{n\times n}$, the trace of $\mathbf{A}$ is denoted by $\text{\rm Tr}[\mathbf{A}]$. 
The Euclidean norm of a vector $\mathbf{x}$ is denoted by $\Vert\mathbf{x}\Vert$. For a matrix $\mathbf{A}\in\mathbb{R}^{n\times d}$ and for two subsets $R\subset[n]$, $S\subset[d]$, we use $\mathbf{A}_{R,S}$ to denote the submatrix of $\mathbf{A}$ consisting of rows indexed in the set $R$ and columns indexed in the set $S$. For simplicity, we write $\mathbf{A}_{R,S}$ as $\mathbf{A}_{:,S}$ if $R=[n]$, and write $\mathbf{A}_{R,S}$ as $\mathbf{A}_{R,:}$ if $S=[d]$. We use $\mathbf{A}_{R,S}^{\dagger}\in \mathbb{R}^{k\times n}$ to denote the Moore-Penrose pseudoinverse of $\mathbf{A}_{R,S}$, i.e., $\mathbf{A}_{R,S}^{\dagger}=(\mathbf{A}_{R,S})^{\dagger}$. If $S$ or $R$ is empty, then we consider $\mathbf{A}_{R,S}$ as an empty matrix, and we set $\mathbf{A}_{R,S}^{\dagger}\mathbf{A}_{R,S}=\mathbf{0}$ and $\mathbf{A}_{R,S}\mathbf{A}_{R,S}^{\dagger}=\mathbf{0}$.
 For convenience, we set $\det[\mathbf{A}_{\emptyset,\emptyset}]=1$ for any matrix $\mathbf{A}$.
 
We denote by $\mathbb{R}[z_1,\ldots,z_n]$ the family of multivariate polynomials in $z_1,\ldots,z_n$ with only real coefficients. We use $\partial_{z_i}$ to indicate the partial differential $\partial\slash\partial_{z_i}$. For each subset $S\subset[n]$, we write $\mathbf{Z}^S:=\prod_{i\in S}z_i$ and $\partial_{\mathbf{Z}^S}:=\prod_{i\in S}\partial_{z_i}$. Similarly, for each vector $\mathbf{b}=(b_1,\ldots,b_n)\in\mathbb{R}^n$ and for each subset $S\subset[n]$, we use $\mathbf{b}^S$ to denote the real number $\prod_{i\in S}b_i$.

\subsection{Linear Algebra}

 In this subsection, we will introduce several lemmas from linear algebra that are essential for our proof. It is well known that, for any two matrices $\mathbf{A}\in\mathbb{R}^{n\times d}$ and $\mathbf{B}\in\mathbb{R}^{d\times n}$, we have $\text{\rm Tr}[\mathbf{A}\mathbf{B}]=\text{\rm Tr}[\mathbf{B}\mathbf{A}]$ \cite[Chapter 3]{Meyer00}, and by the Weinstein-Aronszajn identity we have
\begin{equation}\label{Weinstein-Aronszajn}
\det[\mathbf{I}_d-\mathbf{B}\mathbf{A}]=\det[\mathbf{I}_n-\mathbf{A}\mathbf{B}]
\end{equation}
and
\begin{equation}\label{Weinstein-Aronszajn2}
\det[x\cdot \mathbf{I}_d-\mathbf{B}\mathbf{A}]=x^{d-n}\det[x\cdot \mathbf{I}_n-\mathbf{A}\mathbf{B}].
\end{equation}
Let $\mathbf{Z}=\mathrm{diag}(z_1,\ldots,z_n)$ be a diagonal matrix. For each square matrix $\mathbf{A}\in\mathbb{R}^{n\times n}$, we can expand $\det[\mathbf{Z}-\mathbf{A}]$ as \cite{ravi2}
\begin{equation}\label{expand det}
\det[\mathbf{Z}-\mathbf{A}]=\sum_{S\subset[n]}(-1)^{\vert S\vert}\cdot \det[\mathbf{A}_{S,S}]\cdot \mathbf{Z}^{S^C},
\end{equation}
where $\mathbf{Z}^{S^C}=\prod_{i\notin S}z_i$. Substituting $z_1=\cdots=z_n=x$ into \eqref{expand det}, we further have
\begin{equation}\label{expand det-x}
\det[x\cdot \mathbf{I}_n-\mathbf{A}]=\sum_{k=0}^{n}(-1)^{k}\cdot e_i(\mathbf{A})\cdot  x^{n-k},
\end{equation}
where $e_i(\mathbf{A})$ denotes the sum of all principal $k\times k$ minors of $\mathbf{A}$.

The following lemma is also useful for our argument, which is essentially proved in \cite[Lemma 4.2]{inter2}.

\begin{lemma}
\label{rank-one takeout}
{\rm {\cite[Lemma 4.2]{inter2}}} Let $\mathbf{A}\in\mathbb{R}^{d\times d}$ be a square matrix and let $\mathbf{v}\in\mathbb{R}^d$ be a random vector. Then we have
\begin{equation*}
\mathbb{E}\ \det[\mathbf{A}+\mathbf{v}\mathbf{v}^{\rm T}]=(1+\partial_t)\ \det[\mathbf{A}+t\cdot \mathbb{E}\mathbf{v}\mathbf{v}^{\rm T}]\ \Big|_{t=0}.
\end{equation*}	
\end{lemma}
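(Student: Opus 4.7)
The plan is to reduce the identity to the classical fact that $\det[\mathbf{A}+\mathbf{v}\mathbf{v}^{\rm T}]$ is an affine function of the rank-one matrix $\mathbf{v}\mathbf{v}^{\rm T}$, and then to match the two sides term by term using linearity of expectation. First I would invoke the matrix determinant lemma in its adjugate form, namely
\begin{equation*}
\det[\mathbf{A}+\mathbf{v}\mathbf{v}^{\rm T}]=\det[\mathbf{A}]+\mathbf{v}^{\rm T}\mathrm{adj}(\mathbf{A})\mathbf{v}=\det[\mathbf{A}]+\mathrm{Tr}\bigl[\mathrm{adj}(\mathbf{A})\,\mathbf{v}\mathbf{v}^{\rm T}\bigr],
\end{equation*}
which is purely polynomial in the entries of $\mathbf{A}$ and $\mathbf{v}$ and thus makes no invertibility assumption on $\mathbf{A}$.

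Next I would take expectations. Since the right-hand side is affine in $\mathbf{v}\mathbf{v}^{\rm T}$, linearity of expectation (combined with linearity of the trace) gives
\begin{equation*}
\mathbb{E}\det[\mathbf{A}+\mathbf{v}\mathbf{v}^{\rm T}]=\det[\mathbf{A}]+\mathrm{Tr}\bigl[\mathrm{adj}(\mathbf{A})\,\mathbb{E}\mathbf{v}\mathbf{v}^{\rm T}\bigr].
\end{equation*}
This disposes of the left-hand side of the claimed identity.

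For the right-hand side I would use Jacobi's formula for the derivative of a determinant. Setting $\mathbf{M}:=\mathbb{E}\mathbf{v}\mathbf{v}^{\rm T}$, we have
\begin{equation*}
\partial_t\det[\mathbf{A}+t\mathbf{M}]\Big|_{t=0}=\mathrm{Tr}\bigl[\mathrm{adj}(\mathbf{A})\,\mathbf{M}\bigr],
\end{equation*}
so that
\begin{equation*}
(1+\partial_t)\det[\mathbf{A}+t\mathbf{M}]\Big|_{t=0}=\det[\mathbf{A}]+\mathrm{Tr}\bigl[\mathrm{adj}(\mathbf{A})\,\mathbf{M}\bigr].
\end{equation*}
Comparing with the previous display yields exactly the stated identity.

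I do not anticipate a real obstacle here; the only subtlety is avoiding the temptation to write the matrix determinant lemma in its $\det[\mathbf{A}](1+\mathbf{v}^{\rm T}\mathbf{A}^{-1}\mathbf{v})$ form, which would require $\mathbf{A}$ to be invertible. Using the adjugate (equivalently, Jacobi's formula) keeps every step polynomial in the entries and thus valid for arbitrary $\mathbf{A}\in\mathbb{R}^{d\times d}$, and it also makes transparent the role of the operator $1+\partial_t$ as a bookkeeping device that simultaneously captures the constant term $\det[\mathbf{A}]$ and the single linear correction produced by the rank-one update.
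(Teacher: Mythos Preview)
Your proof is correct. The paper itself does not prove this lemma; it simply cites it from \cite[Lemma 4.2]{inter2}, so there is no in-paper argument to compare against. For what it is worth, the original Marcus--Spielman--Srivastava proof rests on the same underlying observation you exploit: because $\mathbf{v}\mathbf{v}^{\rm T}$ has rank one, $t\mapsto\det[\mathbf{A}+t\,\mathbf{v}\mathbf{v}^{\rm T}]$ is affine, so its value at $t=1$ equals $(1+\partial_t)\det[\mathbf{A}+t\,\mathbf{v}\mathbf{v}^{\rm T}]\big|_{t=0}$, and then one averages over $\mathbf{v}$ using linearity. Your route through the adjugate form of the matrix determinant lemma and Jacobi's formula makes this affineness explicit term by term rather than invoking the rank-one degree bound directly; the two are equivalent, and your care to avoid the $\det[\mathbf{A}](1+\mathbf{v}^{\rm T}\mathbf{A}^{-1}\mathbf{v})$ form is exactly right.
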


We next introduce some basic properties of the Moore-Penrose pseudoinverse that will be utilized later.

\begin{lemma}\label{lemma2.1}
{\rm {\cite[Theorem 4.5]{YTT11}}} Let $\mathbf{B}\in\mathbb{R}^{n\times d}$ and $\mathbf{C}\in\mathbb{R}^{n\times m}$. Set $\mathbf{M}:=[\mathbf{B},\mathbf{C}]\in\mathbb{R}^{n\times (d+m)}$ and $\mathbf{Q}=\mathbf{I}_n-\mathbf{B}\mathbf{B}^{\dagger}$. Then we have
\begin{equation*}
\mathbf{M}\mathbf{M}^{\dagger}=\mathbf{B}\mathbf{B}^{\dagger}+(\mathbf{Q}\mathbf{C})(\mathbf{Q}\mathbf{C})^{\dagger}.
\end{equation*}

\end{lemma}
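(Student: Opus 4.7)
The plan is to recognize that both sides of the claimed identity are orthogonal projection matrices and to show they project onto the same subspace. For any matrix $\mathbf{X}$, the product $\mathbf{X}\mathbf{X}^{\dagger}$ is the orthogonal projection onto $\mathrm{range}(\mathbf{X})$; this is one of the standard characterizations of the Moore--Penrose pseudoinverse and can be verified directly from its four defining axioms. Hence $\mathbf{M}\mathbf{M}^{\dagger}$, $\mathbf{B}\mathbf{B}^{\dagger}$, and $(\mathbf{Q}\mathbf{C})(\mathbf{Q}\mathbf{C})^{\dagger}$ are each symmetric idempotents, and the problem reduces to identifying the range of the sum on the right-hand side as $\mathrm{range}(\mathbf{M})$.

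First I would establish the orthogonal direct-sum decomposition
\begin{equation*}
\mathrm{range}(\mathbf{M}) \;=\; \mathrm{range}(\mathbf{B}) \,\oplus\, \mathrm{range}(\mathbf{Q}\mathbf{C}).
\end{equation*}
For inclusion from right to left, any vector of the form $\mathbf{B}\mathbf{x} + \mathbf{Q}\mathbf{C}\mathbf{y}$ can be rewritten as $\mathbf{B}(\mathbf{x} - \mathbf{B}^{\dagger}\mathbf{C}\mathbf{y}) + \mathbf{C}\mathbf{y}$, which obviously lies in $\mathrm{range}(\mathbf{M})$. For the reverse inclusion, any vector $\mathbf{B}\mathbf{x} + \mathbf{C}\mathbf{y} \in \mathrm{range}(\mathbf{M})$ can be split as $\mathbf{B}(\mathbf{x} + \mathbf{B}^{\dagger}\mathbf{C}\mathbf{y}) + \mathbf{Q}\mathbf{C}\mathbf{y}$. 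The orthogonality of the two summands follows from the identity $\mathbf{B}^{\rm T}\mathbf{Q} = \mathbf{B}^{\rm T}(\mathbf{I}_n - \mathbf{B}\mathbf{B}^{\dagger}) = \mathbf{0}$, which is a consequence of the Moore--Penrose axiom $\mathbf{B}\mathbf{B}^{\dagger}\mathbf{B} = \mathbf{B}$ together with the symmetry of $\mathbf{B}\mathbf{B}^{\dagger}$. Consequently, $\mathrm{range}(\mathbf{Q}\mathbf{C}) \subseteq \mathrm{range}(\mathbf{B})^{\perp}$, confirming the orthogonality of the decomposition.

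Finally, because the sum of two orthogonal projections onto mutually orthogonal subspaces is itself the orthogonal projection onto their direct sum, the right-hand side $\mathbf{B}\mathbf{B}^{\dagger} + (\mathbf{Q}\mathbf{C})(\mathbf{Q}\mathbf{C})^{\dagger}$ is the orthogonal projection onto $\mathrm{range}(\mathbf{B}) \oplus \mathrm{range}(\mathbf{Q}\mathbf{C}) = \mathrm{range}(\mathbf{M})$, which coincides with $\mathbf{M}\mathbf{M}^{\dagger}$. The only step requiring genuine care is verifying the orthogonality of the two summands via the Moore--Penrose axioms; every other manipulation is routine, so I do not anticipate a serious obstacle. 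An alternative route, should it be preferred, would be to verify the four Moore--Penrose axioms directly for $\mathbf{B}\mathbf{B}^{\dagger} + (\mathbf{Q}\mathbf{C})(\mathbf{Q}\mathbf{C})^{\dagger}$ when multiplied by $\mathbf{M}$ from either side, but the subspace-projection argument above is cleaner and more transparent.
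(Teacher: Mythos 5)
Your proof is correct. The paper cites this lemma directly from \cite{YTT11} (Theorem 4.5) and does not reproduce a proof of its own, so there is no in-paper argument to compare against; your subspace-projection argument is the standard one and is carried out cleanly. In particular, you correctly identify each of $\mathbf{M}\mathbf{M}^{\dagger}$, $\mathbf{B}\mathbf{B}^{\dagger}$, and $(\mathbf{Q}\mathbf{C})(\mathbf{Q}\mathbf{C})^{\dagger}$ as orthogonal projectors, establish the orthogonal direct-sum decomposition $\mathrm{range}(\mathbf{M})=\mathrm{range}(\mathbf{B})\oplus\mathrm{range}(\mathbf{Q}\mathbf{C})$ with the key orthogonality $\mathbf{B}^{\rm T}\mathbf{Q}=\mathbf{0}$ derived from $\mathbf{B}\mathbf{B}^{\dagger}\mathbf{B}=\mathbf{B}$ and the symmetry of $\mathbf{B}\mathbf{B}^{\dagger}$, and invoke the fact that the sum of orthogonal projectors onto orthogonal subspaces is the orthogonal projector onto their direct sum. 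Nothing is missing.
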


\begin{lemma}\label{lemma2.2}
{\rm {\cite[Lemma 12]{DR10}}} Let $\mathbf{B}\in\mathbb{R}^{n\times d}$ and $\mathbf{C}\in\mathbb{R}^{n\times m}$.
Set $\mathbf{M}=[\mathbf{B},\mathbf{C}]\in\mathbb{R}^{n\times (d+m)}$ and $\mathbf{Q}=\mathbf{I}_n-\mathbf{B}\mathbf{B}^{\dagger}$.
Then, we have
\begin{equation*}
\det[\mathbf{M}^{\rm T} \mathbf{M}]=\det[\mathbf{C}^{\rm T}\mathbf{Q} \mathbf{C}]\cdot \det[\mathbf{B}^{\rm T} \mathbf{B}].
\end{equation*}	
\end{lemma}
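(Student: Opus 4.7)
The plan is to obtain this identity via the block Schur complement, handling the full-column-rank case directly and then disposing of the degenerate case by a dimension argument. First I would write $\mathbf{M}^{\rm T}\mathbf{M}$ in $2\times 2$ block form
\begin{equation*}
\mathbf{M}^{\rm T}\mathbf{M}=\begin{pmatrix} \mathbf{B}^{\rm T}\mathbf{B} & \mathbf{B}^{\rm T}\mathbf{C} \\ \mathbf{C}^{\rm T}\mathbf{B} & \mathbf{C}^{\rm T}\mathbf{C}\end{pmatrix},
\end{equation*}
and assume first that $\mathbf{B}$ has full column rank, so that $\mathbf{B}^{\rm T}\mathbf{B}$ is invertible. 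The standard block-determinant formula then gives
\begin{equation*}
\det[\mathbf{M}^{\rm T}\mathbf{M}]=\det[\mathbf{B}^{\rm T}\mathbf{B}]\cdot \det\bigl[\mathbf{C}^{\rm T}\mathbf{C}-\mathbf{C}^{\rm T}\mathbf{B}(\mathbf{B}^{\rm T}\mathbf{B})^{-1}\mathbf{B}^{\rm T}\mathbf{C}\bigr].
\end{equation*}

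Next I would identify the Schur complement with the stated projector expression. Since $\mathbf{B}$ has full column rank, the Moore--Penrose pseudoinverse admits the explicit form $\mathbf{B}^{\dagger}=(\mathbf{B}^{\rm T}\mathbf{B})^{-1}\mathbf{B}^{\rm T}$, so
\begin{equation*}
\mathbf{B}(\mathbf{B}^{\rm T}\mathbf{B})^{-1}\mathbf{B}^{\rm T}=\mathbf{B}\mathbf{B}^{\dagger}=\mathbf{I}_n-\mathbf{Q},
\end{equation*}
and the Schur complement inside the second determinant simplifies to $\mathbf{C}^{\rm T}(\mathbf{I}_n-\mathbf{B}\mathbf{B}^{\dagger})\mathbf{C}=\mathbf{C}^{\rm T}\mathbf{Q}\mathbf{C}$. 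Substituting back yields $\det[\mathbf{M}^{\rm T}\mathbf{M}]=\det[\mathbf{B}^{\rm T}\mathbf{B}]\cdot \det[\mathbf{C}^{\rm T}\mathbf{Q}\mathbf{C}]$, which is exactly the claim.

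It remains to dispatch the degenerate case where $\mathbf{B}$ is rank deficient, i.e.\ $\mathrm{rank}(\mathbf{B})<d$. Here $\det[\mathbf{B}^{\rm T}\mathbf{B}]=0$, so the right-hand side vanishes automatically. For the left-hand side, rank deficiency of $\mathbf{B}$ forces the columns of $\mathbf{M}=[\mathbf{B},\mathbf{C}]$ to be linearly dependent (the columns of $\mathbf{B}$ already are), hence $\mathrm{rank}(\mathbf{M})<d+m$ and $\det[\mathbf{M}^{\rm T}\mathbf{M}]=0$ as well. Both sides thus agree, completing the argument. The only delicate point, which I expect to be the main obstacle to keep clean, is the rigorous handling of the rank-deficient case and the convention $\det[\mathbf{A}_{\emptyset,\emptyset}]=1$ when $d=0$ or $m=0$ so that the identity continues to hold on the boundary; but once one observes that the pseudoinverse coincides with $(\mathbf{B}^{\rm T}\mathbf{B})^{-1}\mathbf{B}^{\rm T}$ precisely in the full-column-rank setting, the proof reduces to the well-known Schur complement computation.
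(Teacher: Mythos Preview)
Your proof is correct. The paper does not supply its own proof of this lemma; it is quoted verbatim from \cite[Lemma 12]{DR10}, so there is nothing in the present paper to compare against. Your Schur-complement argument (full-column-rank case via the block determinant identity and $\mathbf{B}^{\dagger}=(\mathbf{B}^{\rm T}\mathbf{B})^{-1}\mathbf{B}^{\rm T}$, then the degenerate case by observing both sides vanish) is the standard route and is exactly what one would expect the cited reference to contain.
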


In the following lemma we present a proof of equation \eqref{estimate deltak}.

\begin{lemma}\label{lemma-estimate deltak}
Assume that $b_1,\ldots,b_m$ are real numbers satisfying $b_1\geq b_2\geq \cdots\geq b_m>0$. Then we have
\begin{equation*}
1-\frac{k}{m}\leq \delta_k:=\frac{\sum_{S\subset[m-1],\vert S\vert=k} \mathbf{b}^{S}}{	 \sum_{S\subset[m],\vert S\vert=k} \mathbf{b}^{S}}\leq \min \Big\{1, \frac{b_1}{b_m}\cdot (1-\frac{k}{m})\Big\}.
\end{equation*}
	
\end{lemma}

\begin{proof}
Note that
\begin{equation*}
\sum_{S\subset[m],\vert S\vert=k} \mathbf{b}^{S}>
\sum_{S\subset[m-1],\vert S\vert=k} \mathbf{b}^{S}
\geq \frac{1}{m}\sum_{j=1}^m\sum_{S\subset[m]\backslash\{j\},\vert S\vert=k} \mathbf{b}^{S}
=\frac{m-k}{m} \sum_{S\subset[m],\vert S\vert=k} \mathbf{b}^{S},
\end{equation*}	
so we have $1-\frac{k}{m}\leq \delta_k< 1$.
It remains to prove $\delta_k\leq \frac{b_1}{b_m}\cdot (1-\frac{k}{m})$. Observe that
\begin{equation*}
{\sum_{S\subset[m-1],\vert S\vert=k} \mathbf{b}^{S}}
={\frac{1}{k}	  \sum_{S\subset[m-1],\vert S\vert=k-1}\Big(\sum_{i\notin S}   b_i\Big)\cdot \mathbf{b}^{S}}
\leq \frac{(m-k)\cdot b_1}{k}\cdot \sum_{S\subset[m-1],\vert S\vert=k-1}\mathbf{b}^{S}.
\end{equation*}
Therefore, we arrive at
\begin{equation*}
\begin{aligned}
\delta_k
&=\frac{\sum_{S\subset[m-1],\vert S\vert=k} \mathbf{b}^{S}}{	 \sum_{S\subset[m],\vert S\vert=k} \mathbf{b}^{S}}
=\frac{\sum_{S\subset[m-1],\vert S\vert=k} \mathbf{b}^{S}}{	 \sum_{S\subset[m-1],\vert S\vert=k} \mathbf{b}^{S}+b_m\cdot \sum_{S\subset[m-1],\vert S\vert=k-1} \mathbf{b}^{S}}\\
&=\frac{1}{	1+b_m\cdot \frac{\sum_{S\subset[m-1],\vert S\vert=k-1} \mathbf{b}^{S}}{\sum_{S\subset[m-1],\vert S\vert=k} \mathbf{b}^{S}}}\leq \frac{1}{1+\frac{b_m}{(\frac{m}{k}-1) b_1}}
\leq \frac{b_1}{b_m}\cdot (1-\frac{k}{m}).			
\end{aligned}
\end{equation*}
This completes the proof.

\end{proof}

\subsection{Interlacing and Real Stability}
%\label{section24}

Recall that a univariate real polynomial, denoted as $p(x)$, is said to be {\em real-rooted} if all of its roots are real. We next introduce the definition  of common interlacing.

\begin{definition}\label{def2.1}
Assume that $f(x)=a_0\cdot \prod_{i=1}^{d-1}(x-a_i)$ and $p(x)=b_0\cdot \prod_{i=1}^{d}(x-b_i)$ are two real-rooted polynomials. We say that $f$ interlaces $p$ if
\begin{equation*}
b_1\leq a_1\leq b_2\leq a_2\leq \cdots \leq a_{d-1}\leq b_{d}.
\end{equation*}
We say that a collection of real-rooted polynomials $p_1(x),\ldots,p_m(x)$ have a common interlacing if there exists a polynomial $f(x)$ such that $f(x)$ interlaces $p_i(x)$ for each $i\in[m]$.
\end{definition}

Given a collection of polynomials $p_1(x),\ldots,p_m(x)$, it is easy to see that if any two of them have a common interlacing, then $p_1(x),\ldots,p_m(x)$ also have a common interlacing. By combining this with the result of Fell \cite[Theorem 2]{Fell}, we can conclude that to prove a collection of polynomials $p_1(x),\ldots,p_m(x)$ have a common interlacing, it is sufficient to prove that the polynomial $\mu\cdot p_i(x)+(1-\mu)\cdot p_j(x)$ is real-rooted for any two distinct integers $i,j\in[m]$ and for any $\mu\in[0,1]$.

When a set of polynomials have a common interlacing, we have the following lemma:
\begin{lemma}\label{lemma2.6}
{\rm {\cite[Lemma 4.2]{inter1}}} Assume that $p_1(x),\ldots,p_m(x)$ are real-rooted polynomials with the same degree and positive leading coefficients. If $p_1(x),\ldots,p_m(x)$ have a common interlacing, then there exists an integer $j\in[m]$ such that
\begin{equation*}
\text{\rm maxroot}\ p_j(x)\leq \text{\rm maxroot}\ \sum\limits_{i=1}^{m}p_i(x).
\end{equation*}
\end{lemma}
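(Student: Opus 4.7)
The plan is to show that the sum $g(x) := \sum_{i=1}^{m} p_i(x)$ must have its largest root at or above $\alpha := \min_i \mathrm{maxroot}(p_i)$; then the index $j$ achieving this minimum satisfies the claimed inequality. The argument is purely a sign-change argument driven by what the common interlacing forces on each $p_i$ immediately to the left of $\alpha_i := \mathrm{maxroot}(p_i)$.

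First I would fix a common interlacer $f$ of $p_1,\dots,p_m$, let $\gamma := \mathrm{maxroot}(f)$, and record the following consequence of Definition \ref{def2.1}: since $f$ has degree $d-1$ and each $p_i$ has degree $d$, the interlacing $b_1^{(i)} \leq a_1 \leq b_2^{(i)} \leq \cdots \leq a_{d-1} \leq b_d^{(i)}$ yields $\gamma \leq \alpha_i$ for every $i$, and moreover $p_i$ has no roots in the open interval $(\gamma, \alpha_i)$. Because $p_i$ has a positive leading coefficient, $p_i(x) \to +\infty$ as $x \to +\infty$, so $p_i(x) > 0$ for $x > \alpha_i$; continuity at the isolated root $\alpha_i$ then forces $p_i(x) \leq 0$ throughout $[\gamma, \alpha_i]$, with strict inequality on $(\gamma, \alpha_i)$ whenever that interval is nonempty.

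Next I would apply this to the interval $[\gamma, \alpha]$ with $\alpha = \alpha_j = \min_i \alpha_i$. For every $x$ in this interval we have $x \leq \alpha \leq \alpha_i$, so the previous paragraph gives $p_i(x) \leq 0$ for every $i$. Summing, $g(x) \leq 0$ on $[\gamma,\alpha]$, and in fact $g$ is not identically zero there (e.g., just to the left of $\alpha$, $p_j$ is strictly negative whenever $\gamma < \alpha$). Combined with the fact that $g$ has the same degree $d$ and a positive leading coefficient, so $g(x) \to +\infty$ as $x \to +\infty$, the intermediate value theorem produces a root of $g$ in $[\alpha, +\infty)$. Therefore $\mathrm{maxroot}(g) \geq \alpha = \mathrm{maxroot}(p_j)$, which is the desired inequality.

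The main obstacle is the degenerate situation where $\gamma = \alpha$, i.e., the common interlacer already has its top root coinciding with $\alpha_j$, or where several $p_i$ share the root $\alpha$ with higher multiplicity. In such cases the open interval argument collapses and one can no longer simply invoke strict negativity to the left of $\alpha$. I would handle this by a standard perturbation: replace $p_i$ with $p_i(x) - \varepsilon$ (or equivalently shift the variable) so that the largest roots separate from $\gamma$, apply the clean argument above to get $\mathrm{maxroot}(g - m\varepsilon) \geq \min_i \mathrm{maxroot}(p_i - \varepsilon)$, and then let $\varepsilon \to 0^+$ using continuity of roots of real-rooted polynomials in the coefficients. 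This limiting step preserves the weak inequality and hence the lemma.
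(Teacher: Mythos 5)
The paper does not actually prove this lemma; it is cited verbatim from Marcus--Spielman--Srivastava's \emph{Interlacing Families I} and used as a black box. So I will evaluate your proof on its own terms, with an eye to the standard argument.

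Your proof is correct. The essential content is the sign observation that for each $i$ and each $x\in[\gamma,\alpha_i]$ we have $p_i(x)\leq 0$: writing $p_i(x)=c_i\prod_{k=1}^d(x-b_k^{(i)})$ with $c_i>0$ and $b_1^{(i)}\leq\cdots\leq b_d^{(i)}=\alpha_i$, the interlacing gives $b_{d-1}^{(i)}\leq\gamma\leq x$, so all factors $(x-b_k^{(i)})$ with $k<d$ are nonnegative while $(x-\alpha_i)\leq 0$, hence $p_i(x)\leq 0$. Applying this at $x=\alpha:=\min_i\alpha_i\in[\gamma,\alpha_i]$ yields $g(\alpha)=\sum_i p_i(\alpha)\leq 0$, and since $g$ has degree $d$ and positive leading coefficient, $\mathrm{maxroot}(g)\geq\alpha=\mathrm{maxroot}(p_j)$ for the $j$ achieving the minimum. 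This matches the standard proof.

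One remark: your final paragraph on perturbation is superfluous, because the core argument never relies on strict negativity. Even when $\gamma=\alpha$, or when several $p_i$ share the root $\alpha$, the non-strict inequality $p_i(\alpha)\leq 0$ holds for every $i$ (the point $\alpha$ still lies in $[\gamma,\alpha_i]$), giving $g(\alpha)\leq 0$ directly; nothing collapses. You can simply delete the perturbation step, which in fact introduces a small unforced burden (justifying continuity of the largest root under the specific perturbation $p_i\mapsto p_i-\varepsilon$, which takes a polynomial with only nonnegative leading coefficient terms to one that may no longer have all the structure you used) without buying you anything the clean argument does not already deliver.
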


We next introduce the notion of real stable polynomials, which generalizes the definition of real-rooted polynomials to the multivariate case. For more details about real stable polynomials we refer to \cite{Branden01,wag}.

\begin{definition}\label{defstable}
A multivariate polynomial $p\in\mathbb{R}[z_1,\ldots, z_n]$ is said to be \emph{real stable} if  $p(z_1,\ldots,z_n)\neq 0$ for all $(z_1,\ldots,z_n)\in\mathbb{C}^n$ with $\mathbf{Im}(z_i)>0, i=1,\ldots,n$.
\end{definition}

A univariate polynomial is real stable if and only if it is real-rooted. The following two lemmas will help us to prove that the polynomials of interest in this paper are real stable.

\begin{lemma}\label{real stable}
{\rm {\cite[Proposition 2.4]{Branden}}} For any Hermitian matrix $\mathbf{B}\in\mathbb{C}^{d\times d}$ and any positive semidefinite Hermitian matrices $\mathbf{A}_1,\ldots,\mathbf{A}_n\in\mathbb{C}^{d\times d}$, the polynomial
\begin{equation*}
\det[\mathbf{A}_1 z_1+\cdots+\mathbf{A}_n z_n+\mathbf{B}]\in\mathbb{R}[z_1,\ldots,z_n]
\end{equation*}
is real stable in $z_1,\ldots,z_n$ if it is not identically zero.
\end{lemma}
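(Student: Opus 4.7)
The plan is to verify two things: first, that the polynomial $p(z_1,\ldots,z_n):=\det[\sum_i z_i\mathbf{A}_i+\mathbf{B}]$ has real coefficients (so that ``real stable'' is even meaningful), and second, that $p$ has no zero on $\{(z_1,\ldots,z_n)\in\mathbb{C}^n:\mathbf{Im}(z_i)>0\}$ provided it is not identically zero.

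For real coefficients, I would substitute arbitrary real values $z_1,\ldots,z_n\in\mathbb{R}$. The matrix $\sum_i z_i\mathbf{A}_i+\mathbf{B}$ is then a real linear combination of Hermitian matrices, hence Hermitian, so its determinant is real. Since $p$ takes real values on $\mathbb{R}^n$, its coefficients are real.

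For nonvanishing, I would decompose each $z_j=a_j+ib_j$ with $b_j>0$ and write
\[ \sum_{j=1}^n z_j\mathbf{A}_j+\mathbf{B}=H+iP,\quad H:=\sum_j a_j\mathbf{A}_j+\mathbf{B},\quad P:=\sum_j b_j\mathbf{A}_j. \]
Here $H$ is Hermitian and $P$ is positive semidefinite (a positive combination of PSD matrices). Suppose $(H+iP)v=0$ for some $v\neq 0$; taking the inner product with $v$ on the left and using that $v^*Hv,v^*Pv\in\mathbb{R}$, one reads off $v^*Pv=0$, and since $P$ is PSD this forces $Pv=0$. Then $\sum_j b_j(v^*\mathbf{A}_jv)=0$ together with $b_j>0$ and $v^*\mathbf{A}_jv\geq 0$ yields $\mathbf{A}_jv=0$ for every $j$. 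Substituting back, $Hv=\mathbf{B}v=0$. Thus any such $v$ lies in the common kernel of $\mathbf{B}$ and all the $\mathbf{A}_j$, which forces $(\sum_j z_j\mathbf{A}_j+\mathbf{B})v=0$ for \emph{every} $z$ and hence $p\equiv 0$. Contrapositively, if $p\not\equiv 0$ then $H+iP$ is invertible whenever all $b_j>0$, which is exactly real stability.

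The main obstacle here is conceptual rather than computational: one must recognize the splitting of $\sum z_j\mathbf{A}_j+\mathbf{B}$ into Hermitian plus $i$-times-PSD parts and then apply the standard trick of sandwiching by $v^*$ to separate real and imaginary components. Once this is done, the positive semidefiniteness of each $\mathbf{A}_j$ pins down the obstruction to invertibility to a simultaneous null vector of $\mathbf{B}$ and of all $\mathbf{A}_j$, which is precisely what the hypothesis $p\not\equiv 0$ rules out.
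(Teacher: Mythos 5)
The paper states this lemma as a bare citation to Borcea--Br\"and\'en \cite[Proposition~2.4]{Branden} and gives no proof of its own, so there is nothing in the paper to compare against; I can only assess your argument on its merits, and it is correct. Your check that $p$ has real coefficients by restricting to $\mathbb{R}^n$ (where $\sum_j z_j\mathbf{A}_j+\mathbf{B}$ is Hermitian, hence has real determinant) is fine, and the nonvanishing step is sound: writing $\sum_j z_j\mathbf{A}_j+\mathbf{B}=H+iP$ with $H$ Hermitian and $P=\sum_j b_j\mathbf{A}_j\succeq 0$, the sandwich $v^*(H+iP)v=0$ separates into $v^*Hv=0$ and $v^*Pv=0$, the latter forcing $Pv=0$ and, term by term, $v^*\mathbf{A}_jv=0$, hence $\mathbf{A}_jv=0$ for all $j$ and then $\mathbf{B}v=Hv=0$; such a common null vector would make $p$ identically zero, which is exactly what the hypothesis excludes. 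For comparison, a commonly presented proof (for instance the one usually given alongside Marcus--Spielman--Srivastava) first treats the case where every $\mathbf{A}_j$ is positive \emph{definite} by restricting to lines $t\mapsto x+te$ with $e\in\mathbb{R}^n_{>0}$, reducing stability to real-rootedness of $t\mapsto\det\bigl[t\,(\sum_j e_j\mathbf{A}_j)+M\bigr]$ via a Hermitian generalized eigenvalue computation, and then handles the semidefinite case by perturbing $\mathbf{A}_j\mapsto\mathbf{A}_j+\varepsilon\mathbf{I}$ and invoking Hurwitz's theorem. Your argument handles the semidefinite case directly and needs no limiting step, which is a genuine (if modest) simplification; both routes are valid and reach the same conclusion.
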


\begin{lemma}\label{real stable2}
Let $r_1,\ldots,r_n$ be a collection of nonnegative integers, and let $p \in \mathbb{R}[z_1,\ldots, z_n]$ be a real stable polynomial of degree at most $r_i$ in $z_i$, $i=1,\ldots,n$. Then the following polynomials are real stable if they are not identically zero:
\begin{enumerate}[{\rm (i)}]
 \item $p(z_1, z_2 ,\ldots, z_n)|_{z_1=a}\in\mathbb{R}[z_2,z_3,\ldots,z_n], \text{ for any } a\in\mathbb{R}$;
 \item $p(z_1, z_2 ,\ldots, z_n)|_{z_1=z_2=x}\in\mathbb{R}[x,z_3,\ldots,z_n]$;
 \item $(\sum_{i=1}^{n}a_i\cdot \partial_{z_i})\cdot p(z_1,\ldots,z_n),\ \text{ for any } a_1\geq 0,\ldots, a_n\geq 0$;
 \item $(1\pm \partial_{z_i})\cdot p(z_1,\ldots,z_n),\ \text{ for any } i\in[n]$.
\end{enumerate}
\end{lemma}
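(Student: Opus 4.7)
The plan is to verify each of the four properties in turn by reducing to the defining zero-set condition of real stability, using Hurwitz's theorem from complex analysis together with classical slicewise arguments (Gauss-Lucas and a residue calculation).

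For (i), I would fix $(z_2,\ldots,z_n)$ in the open polydisk $\mathbb{H}^{n-1}$, where $\mathbb{H}$ denotes the open upper half-plane, and view $p(a+i\varepsilon,z_2,\ldots,z_n)$ as a family of polynomials in $(z_2,\ldots,z_n)$ parametrized by $\varepsilon>0$. Each member is nowhere-vanishing on $\mathbb{H}^{n-1}$ by real stability of $p$, and Hurwitz's theorem on the connected domain $\mathbb{H}^{n-1}$ forces the $\varepsilon\downarrow 0$ limit $p(a,z_2,\ldots,z_n)$ to be either identically zero (excluded by hypothesis) or nowhere zero, which is exactly real stability. Part (ii) is immediate from the definition: if $\mathrm{Im}(x)>0$, then $(x,x,z_3,\ldots,z_n)\in\mathbb{H}^n$ and hence $p(x,x,z_3,\ldots,z_n)\neq 0$.

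For (iii), I would first establish the single-variable case $\partial_{z_i}p$ by a slicewise Gauss-Lucas argument. Fix $(z_j)_{j\neq i}\in\mathbb{H}^{n-1}$ and regard $p$ as a univariate polynomial in $z_i$; by real stability its roots lie in the closed lower half-plane, and Gauss-Lucas implies that the roots of its $z_i$-derivative lie in the convex hull of those roots, hence also in the closed lower half-plane, so $\partial_{z_i}p$ is real stable. For the weighted sum, I would introduce an auxiliary variable $w$ and consider $\widetilde p(z_1,\ldots,z_n,w):=p(z_1+a_1 w,\ldots,z_n+a_n w)$; the translation preserves real stability because $\mathrm{Im}(z_i+a_iw)>0$ whenever $\mathrm{Im}(z_i)>0$, $\mathrm{Im}(w)>0$, and $a_i\geq 0$. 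Applying the single-variable derivative result to $\widetilde p$ in the $w$-slot and then specializing $w=0$ via (i) yields real stability of $\partial_w\widetilde p\big|_{w=0}=\sum_i a_i\partial_{z_i}p$, as desired.

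For (iv), I would again argue slicewise: fix $(z_j)_{j\neq i}\in\mathbb{H}^{n-1}$ and let $f(z_i):=p(z_1,\ldots,z_n)$ viewed as a univariate polynomial in $z_i$, whose roots $\{\zeta_j\}$ all lie in the closed lower half-plane. Writing the logarithmic derivative $f'(z_i)/f(z_i)=\sum_j 1/(z_i-\zeta_j)$, a point $z_i\in\mathbb{H}$ satisfies $\mathrm{Im}(z_i-\zeta_j)>0$, so $\mathrm{Im}\bigl(1/(z_i-\zeta_j)\bigr)<0$ and consequently $\mathrm{Im}(f'(z_i)/f(z_i))<0$; hence $f'(z_i)/f(z_i)$ is never equal to the real values $\pm 1$ on $\mathbb{H}$, which rearranges to $(1\pm\partial_{z_i})f(z_i)\neq 0$ on $\mathbb{H}$, giving real stability of $(1\pm\partial_{z_i})p$. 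The main technical subtlety I foresee is ensuring the not-identically-zero hypothesis propagates through each reduction step, in particular through the specialization $w=0$ in (iii); this is what prevents Hurwitz's dichotomy from collapsing the conclusions to the trivial case.
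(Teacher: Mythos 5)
Your proposal proves the lemma from scratch, whereas the paper's proof is a single line of citations: it invokes \cite[Lemma~2.4(d)]{wag} for (i), the definition for (ii), \cite[Theorem~1.3]{Branden01} (Borcea--Br\"and\'en's symbol characterization of stability preservers) for (iii), and \cite[Corollary~3.8]{inter2} for (iv). So you take a genuinely different route: a self-contained elementary argument built on Hurwitz's theorem, Gauss--Lucas, and the logarithmic-derivative/half-plane picture, rather than appealing to the (considerably more general) stability-preserver machinery. Your proof of (iii) is particularly nice in that the auxiliary substitution $\widetilde p(z,w)=p(z_1+a_1w,\ldots,z_n+a_nw)$ reduces the nonnegative linear combination of partials to the single-variable case plus your (i), avoiding the full Borcea--Br\"and\'en theorem. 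What the paper's approach buys is brevity and the safety of citing fully rigorous published statements; what yours buys is a transparent, reference-free verification of exactly the four facts actually used.

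One small technical point you should tighten: in the slicewise Gauss--Lucas argument for $\partial_{z_i}p$ (and implicitly in the logarithmic-derivative argument for (iv)), the univariate slice $f(z_i)=p(w_1,\ldots,z_i,\ldots,w_n)$ might a priori be a nonzero constant for some $(w_j)_{j\ne i}\in\mathbb H^{n-1}$, in which case $f'\equiv 0$ on that complex line and the Gauss--Lucas conclusion is vacuous rather than contradictory. The standard patch is to observe that the leading coefficient of $p$ in $z_i$ is itself real stable (a Hurwitz limit of $t^{-d}p(tz_1,z_2,\ldots)$ as $t\to\infty$), hence nonvanishing on $\mathbb H^{n-1}$ when $\deg_{z_i}p\ge 1$, so no slice in $\mathbb H^{n-1}$ can be constant. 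For (iv) the degenerate case is harmless as you implicitly note, since then $(1\pm\partial_{z_i})f=c\ne 0$, but for (iii) the slicewise argument as written does not by itself exclude $\partial_{z_i}p$ vanishing somewhere in $\mathbb H^n$ while remaining not identically zero. This is a well-known technicality and easy to repair, but worth making explicit in a rigorous write-up.
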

\begin{proof}
The assertions (i), (ii), (iii), and (iv) follow from \cite[Lemma 2.4 (d)]{wag}, Definition \ref{defstable}, \cite[Theorem 1.3]{Branden01}, and \cite[Corollary 3.8]{inter2} respectively.
\end{proof}

\section{Properties of the Expected Polynomial  $P_{k,r}(x;\mathbf{A},\mathbf{B},\mathbf{C})$}\label{section: basic prop}

In this section, we present some basic properties of the expected polynomial $P_{k,r}(x;\mathbf{A},\mathbf{B},\mathbf{C})$ defined in Definition \ref{def2}.
Recall that for a matrix $\mathbf{M}$ and two subsets $S$ and $R$, we use $\mathbf{M}_{R,S}^{\dagger}$ to denote the Moore-Penrose pseudoinverse of the submatrix $\mathbf{M}_{R,S}$, i.e., $\mathbf{M}_{R,S}^{\dagger}=(\mathbf{M}_{R,S})^{\dagger}$.
Throughout this paper, when the source matrices $\mathbf{B}\in\mathbb{R}^{n\times d_{{B}}}$ and $\mathbf{C}\in\mathbb{R}^{n_{{C}}\times d}$ are specified,  we will denote the projection matrices $\mathbf{Q}_S\in\mathbb{R}^{n\times n}$ and $\mathbf{P}_R\in\mathbb{R}^{d\times d}$ as follows:
\begin{equation}\label{QSPR}
\mathbf{Q}_S:=\mathbf{I}_n-\mathbf{B}_{:,S}\mathbf{B}_{:,S}^{\dagger}\quad \text{and}\quad \mathbf{P}_R:=\mathbf{I}_d-\mathbf{C}_{R,:}^{\dagger}\mathbf{C}_{R,:}=\mathbf{I}_d-(\mathbf{C}^{\rm T})_{:,R}((\mathbf{C}^{\rm T})_{:,R})^{\dagger},
\end{equation}
where $S\subset [d_B]$ and $R\subset [n_C]$ are two subsets.

We start with proving a simple but useful property of $P_{k,r}(x;\mathbf{A},\mathbf{B},\mathbf{C})$.

\begin{prop}\label{P-sym}
Let $\mathbf{A}\in\mathbb{R}^{n\times d}$, $\mathbf{B}\in\mathbb{R}^{n\times d_{{B}}}$ and $\mathbf{C}\in\mathbb{R}^{n_{{C}}\times d}$. Let $k$ and $r$ be two integers such that $k\in[d_B]$ and $r\in[n_C]$. Then we have
\begin{equation*}
P_{k,r}(x;\mathbf{A},\mathbf{B},\mathbf{C})=x^{d-n}\cdot P_{r,k}(x;\mathbf{A}^{\rm T},\mathbf{C}^{\rm T},\mathbf{B}^{\rm T}).
\end{equation*}
\end{prop}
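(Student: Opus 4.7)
The plan is to verify the identity at the level of individual summands $p_{S,R}$ (so that it holds before averaging) and then match up the weights $\det[\mathbf{C}_{R,:}\mathbf{C}_{R,:}^{\rm T}]\cdot\det[\mathbf{B}_{:,S}^{\rm T}\mathbf{B}_{:,S}]$ on the two sides. The engine driving the pointwise identity will be the Weinstein--Aronszajn formula \eqref{Weinstein-Aronszajn2}, combined with the fact that $\mathbf{Q}_S$ and $\mathbf{P}_R$ are symmetric and idempotent.

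First I would rewrite
\begin{equation*}
p_{S,R}(x;\mathbf{A},\mathbf{B},\mathbf{C})=\det[x\cdot\mathbf{I}_d-(\mathbf{Q}_S\mathbf{A}\mathbf{P}_R)^{\rm T}(\mathbf{Q}_S\mathbf{A}\mathbf{P}_R)],
\end{equation*}
which is the characteristic polynomial of a product of a $d\times n$ matrix and an $n\times d$ matrix, and apply \eqref{Weinstein-Aronszajn2} to transfer the determinant to the $n$-dimensional side, yielding
\begin{equation*}
p_{S,R}(x;\mathbf{A},\mathbf{B},\mathbf{C})=x^{d-n}\cdot\det[x\cdot\mathbf{I}_n-(\mathbf{Q}_S\mathbf{A}\mathbf{P}_R)(\mathbf{Q}_S\mathbf{A}\mathbf{P}_R)^{\rm T}].
\end{equation*}

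Next I would identify the right-hand determinant with $p_{R,S}(x;\mathbf{A}^{\rm T},\mathbf{C}^{\rm T},\mathbf{B}^{\rm T})$. Under the swap $(\mathbf{A},\mathbf{B},\mathbf{C})\mapsto(\mathbf{A}^{\rm T},\mathbf{C}^{\rm T},\mathbf{B}^{\rm T})$ and $(k,r)\mapsto(r,k)$, the row/column projectors interchange: a direct check using $(\mathbf{M}^{\rm T})^{\dagger}=(\mathbf{M}^{\dagger})^{\rm T}$ and symmetry of the orthogonal projections shows that the projector attached to the subset $R$ for the source $\mathbf{C}^{\rm T}$ is exactly $\mathbf{P}_R$, and the projector attached to $S$ for the source $\mathbf{B}^{\rm T}$ is exactly $\mathbf{Q}_S$. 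Using $\mathbf{P}_R^2=\mathbf{P}_R$ and $\mathbf{Q}_S^2=\mathbf{Q}_S$, the transposed product expands to $\mathbf{Q}_S\mathbf{A}\mathbf{P}_R\mathbf{A}^{\rm T}\mathbf{Q}_S=(\mathbf{Q}_S\mathbf{A}\mathbf{P}_R)(\mathbf{Q}_S\mathbf{A}\mathbf{P}_R)^{\rm T}$, exactly matching the $n$-dimensional side above. This gives the pointwise relation
\begin{equation*}
p_{S,R}(x;\mathbf{A},\mathbf{B},\mathbf{C})=x^{d-n}\cdot p_{R,S}(x;\mathbf{A}^{\rm T},\mathbf{C}^{\rm T},\mathbf{B}^{\rm T}).
\end{equation*}

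Finally I would check that the weights match. Since $(\mathbf{C}^{\rm T})_{:,R}=(\mathbf{C}_{R,:})^{\rm T}$ and $(\mathbf{B}^{\rm T})_{S,:}=(\mathbf{B}_{:,S})^{\rm T}$, the two determinantal factors that appear in $P_{r,k}(x;\mathbf{A}^{\rm T},\mathbf{C}^{\rm T},\mathbf{B}^{\rm T})$ are $\det[(\mathbf{B}_{:,S})^{\rm T}\mathbf{B}_{:,S}]$ and $\det[\mathbf{C}_{R,:}(\mathbf{C}_{R,:})^{\rm T}]$, i.e.\ precisely the weights appearing in $P_{k,r}(x;\mathbf{A},\mathbf{B},\mathbf{C})$. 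Summing the pointwise identity over all subsets $S\subset[d_B]$ with $|S|=k$ and all $R\subset[n_C]$ with $|R|=r$ then yields the claimed formula $P_{k,r}(x;\mathbf{A},\mathbf{B},\mathbf{C})=x^{d-n}\cdot P_{r,k}(x;\mathbf{A}^{\rm T},\mathbf{C}^{\rm T},\mathbf{B}^{\rm T})$.

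The proof is essentially bookkeeping, and the only place that requires any care is tracking the correspondence between projection matrices (and the pseudoinverse/transpose commutation) under the swap of the two source matrices; once those are matched, the Weinstein--Aronszajn identity does the rest.
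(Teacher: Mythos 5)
Your proposal is correct and follows essentially the same route as the paper: establish the pointwise identity $p_{S,R}(x;\mathbf{A},\mathbf{B},\mathbf{C})=x^{d-n}\,p_{R,S}(x;\mathbf{A}^{\rm T},\mathbf{C}^{\rm T},\mathbf{B}^{\rm T})$ via the Weinstein--Aronszajn formula and then sum over subsets. You supply slightly more detail than the paper on why the projectors and the determinantal weights transpose correctly, but the core argument is identical.
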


\begin{proof}
By the Weinstein-Aronszajn identity \eqref{Weinstein-Aronszajn2}, we can express $p_{S,R}(x;\mathbf{A},\mathbf{B},\mathbf{C})$ as follows:
\begin{equation}\label{eq:proofp31}
\begin{aligned}
p_{S,R}(x;\mathbf{A},\mathbf{B},\mathbf{C})&=\det[x\cdot \mathbf{I}_{d}-  \mathbf{P}_R \mathbf{A}^{\rm T}\mathbf{Q}_S\mathbf{A} \mathbf{P}_R]\\
&=x^{d-n}\cdot \det[x\cdot \mathbf{I}_{n}- \mathbf{Q}_S\mathbf{A} \mathbf{P}_R\mathbf{A}^{\rm T}\mathbf{Q}_S]\\
&=x^{d-n}\cdot p_{R,S}(x;\mathbf{A}^{\rm T},\mathbf{C}^{\rm T},\mathbf{B}^{\rm T}).
\end{aligned}
\end{equation}
Substituting (\ref{eq:proofp31}) into the definition of $P_{k,r}(x;\mathbf{A},\mathbf{B},\mathbf{C})$ gives the desired result.
\end{proof}

We next introduce two multivariate polynomials that have a close relationship with $P_{k,r}(x;\mathbf{A},\mathbf{B},\mathbf{C})$.

\begin{definition}
%\label{def-H}
Let $\mathbf{A}\in\mathbb{R}^{n\times d}$, $\mathbf{B}\in\mathbb{R}^{n\times d_{{B}}}$, and $\mathbf{C}\in\mathbb{R}^{n_{{C}}\times d}$. Let $\mathbf{Y}=\mathrm{diag}(y_1,\ldots,y_{d_B})$ and $\mathbf{Z}=\mathrm{diag}(z_1,\ldots,z_{n_C})$, where $y_i$ and $z_i$ are variables. We define the multivariate polynomials $H(x,w,\mathbf{Y},\mathbf{Z};\mathbf{A},\mathbf{B},\mathbf{C})$ as:
\begin{equation*}
H(x,w,\mathbf{Y},\mathbf{Z};\mathbf{A},\mathbf{B},\mathbf{C}):=	\det\left[\begin{matrix}
w\cdot \mathbf{I}_n  & \mathbf{0} & \mathbf{B} & \mathbf{A}  \\
 \mathbf{0}& \mathbf{Z} & \mathbf{0}  & \mathbf{C}  \\
\mathbf{B}^{\rm T}  &  \mathbf{0} & \mathbf{Y} &  \mathbf{0} \\
\mathbf{A}^{\rm T}  & \mathbf{C}^{\rm T} & \mathbf{0} & x\cdot  \mathbf{I}_d
\end{matrix}\right]
\in\mathbb{R}[x,w,y_1,\ldots,y_{d_B},z_1,\ldots,z_{n_C}].
\end{equation*}
If we set $\mathbf{Y}=y\cdot \mathbf{I}_{d_B}$ and $\mathbf{Z}=z\cdot \mathbf{I}_{n_C}$, the polynomial
$H(x,w,\mathbf{Y},\mathbf{Z};\mathbf{A},\mathbf{B},\mathbf{C})$  simplifies to:
\begin{equation*}
H(x,y,z,w;\mathbf{A},\mathbf{B},\mathbf{C}):=H(x,w,y\cdot \mathbf{I}_{d_B}, z\cdot \mathbf{I}_{n_C};\mathbf{A},\mathbf{B},\mathbf{C})
\in\mathbb{R}[x,y,z,w].
\end{equation*}

\end{definition}

In the following proposition, we show that each polynomial $p_{S,R}(x;\mathbf{A},\mathbf{B},\mathbf{C})$ defined in Definition \ref{def2} can be expressed in terms of the multivariate polynomial $H(x,w,\mathbf{Y},\mathbf{Z};\mathbf{A},\mathbf{B},\mathbf{C})$.

\begin{prop}\label{P-exp-base}
Let $\mathbf{A}\in\mathbb{R}^{n\times d}$, $\mathbf{B}\in\mathbb{R}^{n\times d_{{B}}}$ and $\mathbf{C}\in\mathbb{R}^{n_{{C}}\times d}$.
 Let $\mathbf{Y}=\mathrm{diag}(y_1,\ldots,y_{d_B})$ and $\mathbf{Z}=\mathrm{diag}(z_1,\ldots,z_{n_C})$, where $y_i$ and $z_i$ are variables.
 Let $k$ and $r$ be two integers satisfying $0\leq k\leq \mathrm{rank}(\mathbf{B})$ and $0\leq r\leq \mathrm{rank}(\mathbf{C})$. For any subset $S\subset[d_B]$ of size $k$ and for any subset $R\subset[n_C]$ of size $r$, we have
\begin{equation}\label{eq:P-exp-base}
\begin{aligned}
&\det[\mathbf{C}_{R,:}(\mathbf{C}_{R,:})^{\rm T}] \cdot \det[(\mathbf{B}_{:,S})^{\rm T}\mathbf{B}_{:,S}]\cdot p_{S,R}(x\cdot w;\mathbf{A},\mathbf{B},\mathbf{C})\\
&=(-1)^{k+r}\cdot x^r\cdot w^{d+k-n}\cdot \partial_{\mathbf{Y}^{S^C}} \partial_{\mathbf{Z}^{R^C}}\ H(x,w,\mathbf{Y},\mathbf{Z};\mathbf{A},\mathbf{B},\mathbf{C})  \ \Bigg\vert_{\substack{y_i=0,\forall i\in[d_B]\\z_j=0,\forall j\in[n_C]}}.
\end{aligned}
\end{equation}
Here, $\partial_{\mathbf{Y}^{S^C}}:=\prod_{i\in [d_B]\backslash S}\partial_{y_i}$ and $\partial_{\mathbf{Z}^{R^C}}:=\prod_{i\in [n_C]\backslash R}\partial_{z_i}$.
\end{prop}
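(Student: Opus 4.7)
The plan is to reduce the identity to a concrete computation of a determinant of a reduced block matrix, then to a classical Schur-complement identity.

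First I would exploit multilinearity. Observe that the variable $y_i$ appears in $H$ only at the diagonal entry in row/column $n+n_C+i$, and $z_j$ only at the diagonal entry in row/column $n+j$, so $H$ is multilinear in each of $y_1,\dots,y_{d_B},z_1,\dots,z_{n_C}$. Consequently, $\partial_{\mathbf{Y}^{S^C}}\partial_{\mathbf{Z}^{R^C}}H\big|_{y=z=0}$ is simply the coefficient of $\prod_{i\in S^C}y_i\prod_{j\in R^C}z_j$ in $\det H$. A Laplace expansion picking out those particular diagonal entries shows that this coefficient equals $\det(M'')$, where $M''$ is the $(n+r+k+d)\times(n+r+k+d)$ matrix obtained from $H$ by deleting the rows/columns of the $\mathbf{Y}$-block indexed by $S^C$ and of the $\mathbf{Z}$-block indexed by $R^C$, and setting the remaining $\mathbf{Y}$- and $\mathbf{Z}$-diagonal entries to zero:
\[
M''=\begin{pmatrix} w\mathbf{I}_n & 0 & \mathbf{B}_{:,S} & \mathbf{A}\\ 0 & 0_{r} & 0 & \mathbf{C}_{R,:}\\ \mathbf{B}_{:,S}^{\rm T} & 0 & 0_{k} & 0\\ \mathbf{A}^{\rm T} & \mathbf{C}_{R,:}^{\rm T} & 0 & x\mathbf{I}_d\end{pmatrix}.
\]

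Next I would assume, without loss of generality (by a polynomial/continuity argument in the entries of $\mathbf{A},\mathbf{B},\mathbf{C}$), that $\mathbf{B}_{:,S}$ has full column rank $k$ and $\mathbf{C}_{R,:}$ has full row rank $r$, and compute $\det M''$ by iterated Schur complement. Taking Schur complement first with respect to $w\mathbf{I}_n$ contributes a factor $w^n$; then Schur complement with respect to the resulting $(2,2)$-block $-\mathbf{B}_{:,S}^{\rm T}\mathbf{B}_{:,S}/w$ contributes $(-1)^k w^{-k}\det[\mathbf{B}_{:,S}^{\rm T}\mathbf{B}_{:,S}]$ and, after using $\mathbf{A}^{\rm T}\mathbf{A}/w-\mathbf{A}^{\rm T}\mathbf{B}_{:,S}(\mathbf{B}_{:,S}^{\rm T}\mathbf{B}_{:,S})^{-1}\mathbf{B}_{:,S}^{\rm T}\mathbf{A}/w=\mathbf{A}^{\rm T}\mathbf{Q}_S\mathbf{A}/w$, reduces to the $2\times 2$ block matrix $\left(\begin{smallmatrix}0 & \mathbf{C}_{R,:}\\ \mathbf{C}_{R,:}^{\rm T} & \mathbf{W}\end{smallmatrix}\right)$ where $\mathbf{W}:=x\mathbf{I}_d-\mathbf{A}^{\rm T}\mathbf{Q}_S\mathbf{A}/w$. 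A final Schur complement with respect to $\mathbf{W}$ gives $(-1)^r\det\mathbf{W}\cdot\det[\mathbf{C}_{R,:}\mathbf{W}^{-1}\mathbf{C}_{R,:}^{\rm T}]$, and collecting everything yields
\[
\det M''=(-1)^{k+r}\,w^{n-k}\,\det[\mathbf{B}_{:,S}^{\rm T}\mathbf{B}_{:,S}]\,\det\mathbf{W}\,\det[\mathbf{C}_{R,:}\mathbf{W}^{-1}\mathbf{C}_{R,:}^{\rm T}].
\]

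The remaining task is to match this expression with the claimed right-hand side, that is, to prove that with $\lambda=xw$ and $\mathbf{M}:=\mathbf{A}^{\rm T}\mathbf{Q}_S\mathbf{A}$,
\[
\lambda^r\,\det[\lambda\mathbf{I}_d-\mathbf{M}]\,\det[\mathbf{C}_{R,:}(\lambda\mathbf{I}_d-\mathbf{M})^{-1}\mathbf{C}_{R,:}^{\rm T}]=\det[\mathbf{C}_{R,:}\mathbf{C}_{R,:}^{\rm T}]\cdot p_{S,R}(\lambda).
\]
To establish this, I would change to an orthonormal basis in which the last $r$ vectors span $\operatorname{range}(\mathbf{C}_{R,:}^{\rm T})$; in this basis $\mathbf{C}_{R,:}^{\rm T}=\left(\begin{smallmatrix}0\\ \mathbf{N}\end{smallmatrix}\right)$ with $\mathbf{N}^{\rm T}\mathbf{N}=\mathbf{C}_{R,:}\mathbf{C}_{R,:}^{\rm T}$ and $\mathbf{P}_R=\operatorname{diag}(\mathbf{I}_{d-r},0)$. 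Writing $\mathbf{M}=\left(\begin{smallmatrix}\mathbf{M}_{11}&\mathbf{M}_{12}\\ \mathbf{M}_{21}&\mathbf{M}_{22}\end{smallmatrix}\right)$ one gets $p_{S,R}(\lambda)=\lambda^r\det[\lambda\mathbf{I}_{d-r}-\mathbf{M}_{11}]$; simultaneously the block-inverse formula gives the bottom-right $r\times r$ block of $(\lambda\mathbf{I}_d-\mathbf{M})^{-1}$ as $\mathbf{S}^{-1}$, where $\mathbf{S}$ is the Schur complement of the top block, while $\det[\lambda\mathbf{I}_d-\mathbf{M}]=\det[\lambda\mathbf{I}_{d-r}-\mathbf{M}_{11}]\cdot\det\mathbf{S}$ and $\det[\mathbf{C}_{R,:}(\lambda\mathbf{I}_d-\mathbf{M})^{-1}\mathbf{C}_{R,:}^{\rm T}]=\det[\mathbf{C}_{R,:}\mathbf{C}_{R,:}^{\rm T}]/\det\mathbf{S}$. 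The $\det\mathbf{S}$ factors cancel, proving the identity. Converting back from $\lambda$ to $xw$ using $\det\mathbf{W}=w^{-d}\det[\lambda\mathbf{I}_d-\mathbf{M}]$ and $\mathbf{W}^{-1}=w(\lambda\mathbf{I}_d-\mathbf{M})^{-1}$ then gives exactly the prefactor $(-1)^{k+r}x^rw^{d+k-n}$ in the statement.

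Finally, if $\mathbf{B}_{:,S}$ has rank less than $k$ or $\mathbf{C}_{R,:}$ has rank less than $r$, then on the left side the corresponding Gram determinant vanishes, while on the right side the corresponding block rows/columns of $M''$ are linearly dependent, so $\det M''=0$; thus the identity holds trivially. The main obstacle I anticipate is the third stage: correctly tracking the powers of $x$ and $w$ and the signs $(-1)^{k+r}$ through the three Schur complements and the basis-change argument, and justifying that the intermediate Schur complements are invertible — which is handled by noting that both sides of the claimed identity are polynomials, so it suffices to verify the identity on a dense open subset on which all inverses exist.
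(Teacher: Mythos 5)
Your proposal is correct, and it takes a genuinely different route from the paper's. The paper's proof of Proposition~\ref{P-exp-base} is combinatorial: it expands $H$ via the all-minors formula \eqref{expand det} into a quadruple sum over subsets $L\subset[n]$, $N\subset[d_B]$, $M\subset[n_C]$, $P\subset[d]$, extracts the terms with $N=S$, $M=R$, and then collapses the remaining double sum over $L,P$ back into a single determinant using a chain of auxiliary block-determinant identities (the rather technical Lemma~\ref{lemmas: P-exp-base}, especially part (iv), whose own proof is a multi-step coefficient-matching argument), together with Cauchy--Binet and Lemma~\ref{lemma2.2}. Your approach bypasses all of that: you observe that multilinearity in $y_i,z_j$ makes the mixed derivative at $y=z=0$ a single principal minor $\det M''$, and you then evaluate $\det M''$ by three successive Schur complements, finishing with a block-inverse identity (after an orthogonal change of basis on the $\mathbf{C}_{R,:}$ range) to match the resulting expression to $p_{S,R}$. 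The density/continuity argument correctly handles the degenerate cases, since $\det M''$ is a polynomial in all data and vanishes exactly when the relevant Gram determinant on the left side does. Both proofs are valid; yours is shorter and more structural, while the paper's machinery (in particular Lemma~\ref{lemmas: P-exp-base}(iii)) is somewhat more aligned with the volume-sampling interpretation of $P_{k,r}$, but neither approach confers a real advantage for the downstream Proposition~\ref{P-three-exp}, which in either case follows by the Leibniz resummation \eqref{Leibniz formula}.
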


\begin{proof}
See Appendix \ref{proof of P-exp-base}.		
\end{proof}

With the help of Proposition \ref{P-exp-base}, we derive several equivalent expressions for $P_{k,r}(x;\mathbf{A},\mathbf{B},\mathbf{C})$.

\begin{prop}\label{P-three-exp}
Let $\mathbf{A}\in\mathbb{R}^{n\times d}$, $\mathbf{B}\in\mathbb{R}^{n\times d_{{B}}}$, and $\mathbf{C}\in\mathbb{R}^{n_{{C}}\times d}$. Let $k$ and $r$ be two integers satisfying $0\leq k\leq \mathrm{rank}(\mathbf{B})$ and $0\leq r\leq \mathrm{rank}(\mathbf{C})$. Then we have
\begin{equation}\label{P-exp}
P_{k,r}(x;\mathbf{A},\mathbf{B},\mathbf{C})=\frac{(-1)^{k+r}}{ (d_B-k)!\cdot (n_C-r)!}\cdot x^{r} \cdot \partial_y^{d_B-k}\partial_z^{n_C-r}\ H(x,y,z,1;\mathbf{A},\mathbf{B},\mathbf{C})  \ \Big\vert_{y=z=0}.
\end{equation}
Furthermore, we have the following two alternative expressions for $P_{k,r}(x;\mathbf{A},\mathbf{B},\mathbf{C})$:
\begingroup\fontsize{10pt}{12pt}\selectfont
\begin{subequations}
\begin{align}
P_{k,r}(x;\mathbf{A},\mathbf{B},\mathbf{C})&=\frac{(-1)^{k}\cdot x^{r}}{ r!(d_B-k)!} \cdot \partial_y^{d_B-k}\partial_z^{r}\ \det\left[
\begin{pmatrix}
x\cdot \mathbf{I}_d +z\cdot
\mathbf{C}^{\rm T}\mathbf{C}  & \mathbf{0}  \\
 \mathbf{0} & y\cdot \mathbf{I}_{d_B}
\end{pmatrix}
-
\begin{pmatrix}
\mathbf{A}^{\rm T}   \\
\mathbf{B}^{\rm T}
\end{pmatrix}
\begin{pmatrix}
\mathbf{A} & \mathbf{B}
\end{pmatrix}
\right]    \ \Bigg\vert_{y=z=0}\label{P-exp-1}\\
P_{k,r}(x;\mathbf{A},\mathbf{B},\mathbf{C})&=\frac{(-1)^{r}\cdot x^{d-n+k}}{k! (n_C-r)!} \cdot \partial_y^{k}\partial_z^{n_C-r}\ \det\left[
\begin{pmatrix}
x\cdot \mathbf{I}_n +y\cdot
\mathbf{B}\mathbf{B}^{\rm T}  &  \mathbf{0} \\
\mathbf{0}  & z\cdot \mathbf{I}_{n_C}
\end{pmatrix}
-
\begin{pmatrix}
\mathbf{A}  \\
\mathbf{C}
\end{pmatrix}
\begin{pmatrix}
\mathbf{A}^{\rm T}  & \mathbf{C}^{\rm T}
\end{pmatrix}
\right]    \ \Bigg\vert_{y=z=0}\label{P-exp-2}.
\end{align}
\end{subequations}
\endgroup
\end{prop}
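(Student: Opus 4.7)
My plan is to establish the three expressions in turn. First, for (\ref{P-exp}), I would sum the identity (\ref{eq:P-exp-base}) of Proposition \ref{P-exp-base} over all subsets $S\subset[d_B]$ of size $k$ and $R\subset[n_C]$ of size $r$. The left-hand side directly yields $P_{k,r}(xw;\mathbf{A},\mathbf{B},\mathbf{C})$ by the definition of $P_{k,r}$. For the right-hand side, the key observation is that $H(x,w,\mathbf{Y},\mathbf{Z})$ is multi-affine in $y_1,\ldots,y_{d_B}$ and $z_1,\ldots,z_{n_C}$, since each such variable appears in only one diagonal entry of the defining block matrix. Writing $H=\sum_{T,U}c_{T,U}(x,w)\mathbf{Y}^T\mathbf{Z}^U$, the operator $\partial_{\mathbf{Y}^{S^C}}\partial_{\mathbf{Z}^{R^C}}$ evaluated at $y_i=z_j=0$ extracts the coefficient $c_{S^C,R^C}(x,w)$, and summing over $|S|=k$, $|R|=r$ produces $\sum_{|T|=d_B-k,|U|=n_C-r}c_{T,U}(x,w)$. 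This sum is exactly $\frac{1}{(d_B-k)!(n_C-r)!}\partial_y^{d_B-k}\partial_z^{n_C-r}H(x,y,z,w)|_{y=z=0}$, since setting $y_i=y$ and $z_j=z$ in a multi-affine polynomial groups terms by the cardinalities of the indexing subsets. Taking $w=1$ gives (\ref{P-exp}).

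For (\ref{P-exp-1}), let $D(x,y,z)$ denote the inner determinant on its right-hand side. I would apply Schur complement to the $4\times 4$ block matrix defining $H|_{w=1}$, pivoting on the top-left $(n+n_C)\times(n+n_C)$ block $\mathrm{diag}(\mathbf{I}_n, z\mathbf{I}_{n_C})$ of determinant $z^{n_C}$. The resulting Schur complement, after a sign-neutral swap of the two remaining block rows and columns (contributing sign $(-1)^{2d_B d}=1$), is precisely $D(x,y,-z^{-1})$, yielding the polynomial identity $H(x,y,z,1)=z^{n_C}\cdot D(x,y,-z^{-1})$. Since the $z$-dependence of $D$ sits only in the term $z\mathbf{C}^{\rm T}\mathbf{C}$ and $\mathrm{rank}(\mathbf{C}^{\rm T}\mathbf{C})=\mathrm{rank}(\mathbf{C})\leq n_C$, one has $\deg_z D\leq n_C$, so the right-hand side is truly a polynomial. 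Writing $D(x,y,z)=\sum_j D_j(x,y)z^j$ and comparing the coefficient of $z^{n_C-r}$ on both sides yields $[z^{n_C-r}]H(x,y,z,1)=(-1)^r[z^r]D(x,y,z)$. Translating to derivatives and substituting into (\ref{P-exp}) produces (\ref{P-exp-1}) once the signs $(-1)^{k+2r}=(-1)^k$ collapse.

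For (\ref{P-exp-2}), I would invoke Proposition \ref{P-sym} to write $P_{k,r}(x;\mathbf{A},\mathbf{B},\mathbf{C})=x^{d-n}\cdot P_{r,k}(x;\mathbf{A}^{\rm T},\mathbf{C}^{\rm T},\mathbf{B}^{\rm T})$, and then apply (\ref{P-exp-1}) to the right-hand side with source matrices $(\mathbf{A}',\mathbf{B}',\mathbf{C}')=(\mathbf{A}^{\rm T},\mathbf{C}^{\rm T},\mathbf{B}^{\rm T})$ and swapped dimensions $(n',d',d_{B'},n_{C'})=(d,n,n_C,d_B)$. Under the transposition, $(\mathbf{A}')^{\rm T}\mathbf{A}'$, $(\mathbf{B}')^{\rm T}\mathbf{B}'$, $(\mathbf{C}')^{\rm T}\mathbf{C}'$, and $(\mathbf{A}')^{\rm T}\mathbf{B}'$ turn into $\mathbf{A}\mathbf{A}^{\rm T}$, $\mathbf{C}\mathbf{C}^{\rm T}$, $\mathbf{B}\mathbf{B}^{\rm T}$, and $\mathbf{A}\mathbf{C}^{\rm T}$ respectively, so the inner determinant becomes exactly the one in (\ref{P-exp-2}) after relabeling the dummy variables $y\leftrightarrow z$, and the factor $x^{d-n}$ from Proposition \ref{P-sym} combines with $x^{r+k}$ into the required $x^{d-n+k}$.

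The main technical obstacle is verifying the Schur-complement identity in the second step as a genuine polynomial identity rather than merely as an equality of rational functions: the manipulation is initially valid only away from $z=0$, so one must check that the prefactor $z^{n_C}$ exactly cancels the $z^{-j}$ singularities introduced by $z^{-1}\mathbf{C}^{\rm T}\mathbf{C}$, which reduces to the rank bound $\deg_z D\leq \mathrm{rank}(\mathbf{C})\leq n_C$. Secondary bookkeeping concerns are the signs produced by block-row/column permutations and the dimension factor $x^{d-n}$ appearing through Proposition \ref{P-sym}; if these are handled carefully, the three expressions follow cleanly from a single polynomial identity relating $H$ to $D$.
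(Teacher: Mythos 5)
Your proposal follows the same route as the paper: \eqref{P-exp} comes from summing Proposition~\ref{P-exp-base} over $|S|=k$, $|R|=r$ and applying coefficient extraction for multi-affine polynomials (the paper phrases this as a Leibniz-type identity, \eqref{Leibniz formula}, but the content is identical); \eqref{P-exp-1} comes from a Schur/block-determinant reduction of $H(x,y,z,1)$ producing the identity $H(x,y,z,1)=z^{n_C}\,D(x,y,-1/z)$ followed by a $z\leftrightarrow 1/z$ coefficient comparison, again matching the paper's derivation; and \eqref{P-exp-2} comes from Proposition~\ref{P-sym} applied to \eqref{P-exp-1} with transposed data. The only slip is notational: in the last step the factor contributed by \eqref{P-exp-1} applied to $P_{r,k}(x;\mathbf{A}^{\rm T},\mathbf{C}^{\rm T},\mathbf{B}^{\rm T})$ is $x^{k}$ (since the roles of $k$ and $r$ are swapped), not $x^{r+k}$; combined with $x^{d-n}$ this gives the correct $x^{d-n+k}$, so your conclusion is right even though the intermediate expression is misstated.
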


\begin{proof}
See Appendix \ref{proof of P-three-exp}.
\end{proof}

\begin{remark}\label{remark:realrootedness of Pkr}
Lemma \ref{real stable} establishes that $H(x,y,z,1;\mathbf{A},\mathbf{B},\mathbf{C})$ is real stable in the variables $x, y$, and $z$. Lemma \ref{real stable2} further demonstrates that the real stability is preserved by the differential operators $\partial_y$, $\partial_z$ and the setting $y=z=0$. Consequently, by applying equation \eqref{P-exp}, we can deduce that $P_{k,r}(x;\mathbf{A},\mathbf{B},\mathbf{C})$ is real-rooted when $0\leq k\leq \mathrm{rank}(\mathbf{B})$ and $0\leq r\leq \mathrm{rank}(\mathbf{C})$.
Moreover, note that $P_{k,r}(x;\mathbf{A},\mathbf{B},\mathbf{C})$ is a convex linear combination of real-rooted polynomials $p_{S,R}(x;\mathbf{A},\mathbf{B},\mathbf{C})$ whose leading coefficients and roots are all nonnegative.  Therefore, the roots of $P_{k,r}(x;\mathbf{A},\mathbf{B},\mathbf{C})$ are all nonnegative.
\end{remark}

In the following proposition we simplify the expression of $P_{k,r}(x;\mathbf{A},\mathbf{B},\mathbf{C})$ for the case when $r=0$.

\begin{prop}\label{P-exp-r=0}
Let $\mathbf{A}\in\mathbb{R}^{n\times d}$ and $\mathbf{B}\in\mathbb{R}^{n\times d_{{B}}}$. For each  integer $k\in [0, \mathrm{rank}(\mathbf{B})]$,
we can derive the equivalent expressions for $P_{k}(x;\mathbf{A},\mathbf{B})$ as follows:
\begin{subequations}
\begin{align}
P_{k}(x;\mathbf{A},\mathbf{B})&=\frac{(-1)^k}{ (d_{{B}}-k)!}\cdot \partial_y^{d_{{B}}-k}\ \det\left[
\begin{pmatrix}
x\cdot \mathbf{I}_d  &  \mathbf{0} \\
\mathbf{0}  & y\cdot \mathbf{I}_{d_B}
\end{pmatrix}
-
\begin{pmatrix}
\mathbf{A}^{\rm T}   \\
\mathbf{B}^{\rm T}
\end{pmatrix}
\begin{pmatrix}
\mathbf{A} & \mathbf{B}
\end{pmatrix}
\right] \ \Bigg|_{y=0}\label{lemma_f_k:expression1},\\
P_{k}(x;\mathbf{A},\mathbf{B})&=\frac{(-1)^k }{k!}\cdot x^{d-n+k} \cdot \partial_y^{k} \ \det[x\cdot \mathbf{I}_n-\mathbf{A}\mathbf{A}^{\rm T}-y\cdot  \mathbf{B}\mathbf{B}^{\rm T}]	 \ \big|_{y=0}\label{lemma_f_k:expression2}.		
\end{align}	
\end{subequations}
 Here,  the definition of $P_{k}(x;\mathbf{A},\mathbf{B})$ is given in Definition \ref{def2}, i.e., $P_{k}(x;\mathbf{A},\mathbf{B}):=P_{k,0}(x;\mathbf{A},\mathbf{B},\mathbf{0})$.
\end{prop}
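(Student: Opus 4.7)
The plan is to specialize Proposition \ref{P-three-exp} to the case $r=0$. The key preliminary observation is that $P_{k,0}(x;\mathbf{A},\mathbf{B},\mathbf{C})$ does not actually depend on $\mathbf{C}$: when $r=0$, the only index set in the outer sum in Definition \ref{def2} is $R = \emptyset$, and under the standing conventions $\det[\mathbf{C}_{\emptyset,:}(\mathbf{C}_{\emptyset,:})^{\rm T}] = 1$ and $\mathbf{C}_{\emptyset,:}^{\dagger}\mathbf{C}_{\emptyset,:} = \mathbf{0}$, the matrix $\mathbf{C}$ drops out of every summand. This freedom lets me pick any convenient $\mathbf{C}$ when applying \eqref{P-exp-1} and \eqref{P-exp-2}; the natural choice is the empty matrix $\mathbf{C} = \mathbf{0}_{0 \times d}$, i.e.\ $n_C = 0$.

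For \eqref{lemma_f_k:expression1}, I would substitute $r=0$ and $n_C=0$ directly into \eqref{P-exp-1}. Then $x^r = 1$, $\partial_z^{r} = \mathrm{id}$, $(n_C - r)! = 1$, the sign $(-1)^{k+r}$ becomes $(-1)^k$, and the block $z\cdot \mathbf{C}^{\rm T}\mathbf{C}$ vanishes (either because $\mathbf{C}$ is empty or because one later evaluates at $z=0$). What remains is exactly the right-hand side of \eqref{lemma_f_k:expression1}.

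For \eqref{lemma_f_k:expression2}, I would analogously substitute $r=0$ and $n_C=0$ into \eqref{P-exp-2}. The bottom-right $z$-block of the determinant disappears, $\partial_z^{n_C - r}$ becomes the identity, $(-1)^r = 1$, and one is left with
\begin{equation*}
P_{k,0}(x;\mathbf{A},\mathbf{B},\mathbf{C}) \;=\; \frac{x^{d-n+k}}{k!} \cdot \partial_y^{k}\ \det\bigl[x\cdot \mathbf{I}_n + y\cdot \mathbf{B}\mathbf{B}^{\rm T} - \mathbf{A}\mathbf{A}^{\rm T}\bigr] \,\bigr|_{y=0}.
\end{equation*}
To match the sign convention of \eqref{lemma_f_k:expression2}, I would apply the elementary chain-rule identity $\partial_y^{k} g(y)\bigr|_{y=0} = (-1)^k\, \partial_y^{k} g(-y)\bigr|_{y=0}$ with $g(y) = \det[x\mathbf{I}_n + y\,\mathbf{B}\mathbf{B}^{\rm T} - \mathbf{A}\mathbf{A}^{\rm T}]$. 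This change of variable $y \mapsto -y$ flips the sign in front of the $\mathbf{B}\mathbf{B}^{\rm T}$ term and produces precisely the missing factor $(-1)^k$, yielding \eqref{lemma_f_k:expression2}.

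I do not expect a genuine obstacle: the proposition is a direct specialization of Proposition \ref{P-three-exp} combined with the $\mathbf{C}$-independence observation and a single sign flip via $y \leftrightarrow -y$. The only point requiring a brief justification is the $\mathbf{C}$-independence, which follows immediately from the conventions introduced before Definition \ref{def2}.
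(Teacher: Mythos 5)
Your derivation of \eqref{lemma_f_k:expression1} is the same as the paper's: specialize \eqref{P-exp-1} to $r=0$. For \eqref{lemma_f_k:expression2}, however, you take a genuinely different and shorter route. The paper does not invoke \eqref{P-exp-2} at this point; instead it sets $f(x,y)$ to be the determinant appearing in \eqref{lemma_f_k:expression1}, rewrites it via the Weinstein--Aronszajn identity as $x^d y^{d_B}\det[\mathbf{I}_n - \tfrac{1}{x}\mathbf{A}\mathbf{A}^{\rm T} - \tfrac{1}{y}\mathbf{B}\mathbf{B}^{\rm T}]$, and then uses the reciprocal-substitution identity $\tfrac{1}{(d_B-k)!}\partial_y^{d_B-k} f(x,y)\big|_{y=0} = \tfrac{1}{k!}\partial_y^{k}\big(y^{d_B}f(x,1/y)\big)\big|_{y=0}$ together with the chain rule. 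In effect, the paper re-derives the $r=0$ case of the passage from \eqref{P-exp-1} to \eqref{P-exp-2} by hand. You instead specialize \eqref{P-exp-2} directly, which is more economical since that identity already encodes exactly the required transformation; your $\mathbf{C}$-independence observation is correct, and the $y\mapsto -y$ change of variable (contributing the $(-1)^k$) correctly reconciles the $+y\,\mathbf{B}\mathbf{B}^{\rm T}$ in \eqref{P-exp-2} with the $-y\,\mathbf{B}\mathbf{B}^{\rm T}$ in \eqref{lemma_f_k:expression2}. One minor remark: taking $n_C=0$ is a clean way to make the $z$-block vanish, but it is not strictly necessary — for an arbitrary $\mathbf{C}\in\mathbb{R}^{n_C\times d}$, the operator $\tfrac{1}{n_C!}\partial_z^{n_C}(\cdot)\big|_{z=0}$ applied to the block determinant in \eqref{P-exp-2} extracts the coefficient of $z^{n_C}$, which is precisely $\det[x\cdot\mathbf{I}_n + y\cdot\mathbf{B}\mathbf{B}^{\rm T} - \mathbf{A}\mathbf{A}^{\rm T}]$, so the $\mathbf{C}$-dependence drops out on the right-hand side as well, without needing to appeal to the degenerate $n_C=0$ case of Proposition \ref{P-three-exp}.
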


\begin{proof}
See Appendix \ref{proof of P-exp-r=0}.
\end{proof}

\begin{remark}\label{CSS-GCSS}
Consider the special case when $\mathbf{B}=\mathbf{A}\in\mathbb{R}^{n\times d}$. In this case we have
\begin{equation}\label{remark32:eq1}
\begin{aligned}
& \partial_y^{k} \ \det[x\cdot \mathbf{I}_n-\mathbf{A}\mathbf{A}^{\rm T}-y\cdot  \mathbf{B}\mathbf{B}^{\rm T}]\\
&=\partial_y^{k}\ \det[x\cdot \mathbf{I}_n-(y+1)\cdot \mathbf{A}\mathbf{A}^{\rm T}] \ \big|_{y=0} =x^n\cdot  \partial_y^{k}\ \det[\mathbf{I}_n-\frac{y+1}{x}\cdot \mathbf{A}\mathbf{A}^{\rm T}] \ \big|_{y=0}\\
&\overset{(a)}=x^{n-k}\cdot\partial_y^{k}\ \det[\mathbf{I}_n-y\cdot \mathbf{A}\mathbf{A}^{\rm T}] \ \big|_{y=1/x} = x^{n-k-d}\cdot \mathcal{R}_{x,d}^+\ \partial_x^{k}\ \det[\mathbf{I}_n-x\cdot \mathbf{A}\mathbf{A}^{\rm T}] \\
&\overset{(b)}=x^{n-k-d}\cdot \mathcal{R}_{x,d}^+\ \partial_x^{k}\ \det[\mathbf{I}_d-x\cdot \mathbf{A}^{\rm T}\mathbf{A}]=x^{n-k-d}\cdot \mathcal{R}_{x,d}^+\ \partial_x^{k}\ \mathcal{R}_{x,d}^+\ \det[x\cdot\mathbf{I}_d-  \mathbf{A}^{\rm T}\mathbf{A}],
\end{aligned}
\end{equation}
where the flip operator $\mathcal{R}_{x,d}^+$ is defined as $\mathcal{R}_{x,d}^+\ p(x):=x^d\cdot p(1/x)$ for any polynomial $p(x)$ of degree at most $d$. Equation $(a)$ follows from the application of the chain rule, while equation $(b)$ is derived using the Weinstein-Aronszajn identity \eqref{Weinstein-Aronszajn}.
Then, substituting \eqref{remark32:eq1} into \eqref{lemma_f_k:expression2} we have
\begin{equation*}
P_k(x;\mathbf{A},\mathbf{A})	=\frac{(-1)^k}{k!}\cdot\mathcal{R}_{x,d}^+\cdot  \partial_x^{k}\cdot  \mathcal{R}_{x,d}^+ \det[x\cdot\mathbf{I}_d-  \mathbf{A}^{\rm T}\mathbf{A}],\\
\end{equation*}
which recovers the first result in \cite[Proposition 3.1]{CXX23}.

\end{remark}

In the following proposition, we simplify the expression of $P_{k,r}(x;\mathbf{A},\mathbf{B},\mathbf{C})$ specifically for the case when $\mathbf{A}\in\mathbb{R}^{d\times d}$ is a square matrix and $\mathbf{B}=\mathbf{C}=\mathbf{I}_d$.

\begin{prop}\label{P-exp-I}
Let $\mathbf{A}\in\mathbb{R}^{d\times d}$ and $\mathbf{B}=\mathbf{C}=\mathbf{I}_d$. Let $k,r\leq d$  be two nonnegative integers. 
Let $S\subset[d]$ be a subset of size $|S|\leq k$, 
and let $R\subset[d]$ be a subset of size $|R|\leq r$.
Then we have
\begingroup\fontsize{10pt}{12pt}\selectfont
\begin{equation}\label{P-exp-II-oct}
P_{k-|S|,r-|R|}(x;\mathbf{Q}_{S}\mathbf{A}\mathbf{P}_{R},\mathbf{Q}_{S}\mathbf{B},\mathbf{C}\mathbf{P}_{R})
=\frac{x^k}{(k-|S|)!(r-|R|)!} \partial_x^{k-|S|}
\big( x^{r-|S|} \partial_x^{r-|R|} \det[x\cdot\mathbf{I}_{d-|R|}-(\mathbf{A}_{S^C,R^C})^{\rm T}\mathbf{A}_{S^C,R^C} ]\big),
\end{equation}
\endgroup
where the projection matrices $\mathbf{Q}_S=\mathbf{I}_d-\mathbf{B}_{:,S}\mathbf{B}_{:,S}^{\dagger}$ and $\mathbf{P}_R=\mathbf{I}_d-\mathbf{C}_{R,:}^{\dagger}\mathbf{C}_{R,:}$ are defined as in \eqref{QSPR}.
In particular, we have
\begin{equation}\label{P-exp-II}
P_{k,r}(x;\mathbf{A},\mathbf{I}_d,\mathbf{I}_d)=\frac{1}{k!\cdot r!}\cdot x^k	\cdot\partial_x^{k}\cdot \big( x^r\cdot \partial_x^{r}\ \det[x\cdot \mathbf{I}_d-\mathbf{A}^{\rm T}\mathbf{A}]\big)
\end{equation}
and
\begin{equation}\label{P-exp-II-square}
P_{k,k}(x;\mathbf{A},\mathbf{I}_d,\mathbf{I}_d)=\frac{1}{(k!)^2}\cdot x^k\cdot   (\partial_x\cdot x\cdot \partial_x)^k\  \det[x\cdot \mathbf{I}_{d}-\mathbf{A}^{\rm T}\mathbf{A}].
\end{equation}

\end{prop}
\begin{proof}
See Appendix \ref{proof of P-exp-I}.
\end{proof}

We next present a recursive formula for $P_{k,r}(x;\mathbf{A},\mathbf{B},\mathbf{C})$, which plays a key role in the proof of Theorem \ref{mth1-spr}.

\begin{prop}\label{P-recursive2}
Let $\mathbf{A}\in\mathbb{R}^{n\times d}$, $\mathbf{B}=[\mathbf{b}_1,\ldots,\mathbf{b}_{d_B}]\in\mathbb{R}^{n\times d_{{B}}}$, and $\mathbf{C}=[\mathbf{c}_1,\ldots,\mathbf{c}_{n_C}]^{\rm T}\in\mathbb{R}^{n_{{C}}\times d}$.
Let $k$ and $r$ be two integers satisfying $1\leq k\leq \mathrm{rank}(\mathbf{B})$ and $1\leq r\leq \mathrm{rank}(\mathbf{C})$.
Let $S$ be an $l$-subset of $[d_B]$, where $0\leq l\leq k-1$, and let $R$ be a $t$-subset of $[n_C]$, where $0\leq t\leq r-1$.
Then we have
\begin{equation}\label{P-recursive2-AB}
P_{k-l,r-t}(x;\mathbf{Q}_{S}\mathbf{A}\mathbf{P}_{R},\mathbf{Q}_{S}\mathbf{B},\mathbf{C}\mathbf{P}_{R})=\frac{1}{k-l}	\sum_{i:\Vert \mathbf{Q}_{S}\mathbf{b}_{i} \Vert\neq 0}^{}\Vert \mathbf{Q}_{S}\mathbf{b}_{i} \Vert^2\cdot P_{k-l-1,r-t}(x;\mathbf{Q}_{S\cup\{i\}}\mathbf{A}\mathbf{P}_{R},\mathbf{Q}_{S\cup\{i\}}\mathbf{B},\mathbf{C} \mathbf{P}_{R})
\end{equation}	
and
\begin{equation}\label{P-recursive2-AC}
P_{k-l,r-t}(x;\mathbf{Q}_{S}\mathbf{A}\mathbf{P}_{R},\mathbf{Q}_{S}\mathbf{B},\mathbf{C}\mathbf{P}_{R})=\frac{1}{r-t}	\sum_{i:\Vert \mathbf{c}_{i}^{\rm T}\mathbf{P}_{R}\Vert\neq 0}^{}\Vert \mathbf{c}_{i}^{\rm T}\mathbf{P}_{R}\Vert^2\cdot P_{k-l,r-t-1}(x;\mathbf{Q}_{S}\mathbf{A}\mathbf{P}_{R\cup\{i\}},\mathbf{Q}_{S}\mathbf{B},\mathbf{C} \mathbf{P}_{R\cup\{i\}}).
\end{equation}	
Here, the projection matrices $\mathbf{Q}_S$ and $\mathbf{P}_R$ are defined in \eqref{QSPR}.
\end{prop}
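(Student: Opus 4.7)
The plan is to prove \eqref{P-recursive2-AB} directly from Definition \ref{def2} by a double-counting argument; the symmetric equation \eqref{P-recursive2-AC} then follows either by running the mirror argument on the row side, or by applying Proposition \ref{P-sym} to transpose into the first case. Write $\widetilde{\mathbf{A}}:=\mathbf{Q}_S\mathbf{A}\mathbf{P}_R$, $\widetilde{\mathbf{B}}:=\mathbf{Q}_S\mathbf{B}$, and $\widetilde{\mathbf{C}}:=\mathbf{C}\mathbf{P}_R$. First I would expand $P_{k-l,r-t}(x;\widetilde{\mathbf{A}},\widetilde{\mathbf{B}},\widetilde{\mathbf{C}})$ via Definition \ref{def2}, multiply both sides by $k-l$, and use the trivial reindexing
\begin{equation*}
(k-l)\cdot\sum_{\substack{S'\subset[d_B]\\|S'|=k-l}}g(S')\;=\;\sum_{i\in[d_B]}\;\sum_{\substack{S'\ni i\\|S'|=k-l}}g(S')
\end{equation*}
to convert each sum over $(k-l)$-subsets $S'$ into a sum over pairs $(i,S'')$ with $S''=S'\setminus\{i\}$. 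Indices $i$ with $\mathbf{Q}_S\mathbf{b}_i=\mathbf{0}$ contribute nothing because then the $i$-th column of $\widetilde{\mathbf{B}}$ vanishes, which is precisely why the outer sum on the right side of \eqref{P-recursive2-AB} is restricted to $\|\mathbf{Q}_S\mathbf{b}_i\|\neq 0$.

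The heart of the argument is then to establish two matching identities term by term. The first is the determinantal identity
\begin{equation*}
\det[\widetilde{\mathbf{B}}_{:,S'}^{\rm T}\widetilde{\mathbf{B}}_{:,S'}]=\|\mathbf{Q}_S\mathbf{b}_i\|^2\cdot\det[\mathbf{B}_{:,S''}^{\rm T}\mathbf{Q}_{S\cup\{i\}}\mathbf{B}_{:,S''}]
\end{equation*}
for $S'=S''\cup\{i\}$ with $i\notin S\cup S''$. I plan to derive it by applying Lemma \ref{lemma2.2} to the concatenation $[\mathbf{Q}_S\mathbf{b}_i,\ \mathbf{Q}_S\mathbf{B}_{:,S''}]$ with $\mathbf{Q}_S\mathbf{b}_i$ as the ``B'' block, so that the residual projector in Lemma \ref{lemma2.2} becomes the rank-one update $\mathbf{I}-\mathbf{Q}_S\mathbf{b}_i\mathbf{b}_i^{\rm T}\mathbf{Q}_S/\|\mathbf{Q}_S\mathbf{b}_i\|^2=\mathbf{Q}_{S\cup\{i\}}$, which is itself Lemma \ref{lemma2.1} specialized to appending one column. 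The second is the characteristic-polynomial identity
\begin{equation*}
p_{S',R'}(x;\widetilde{\mathbf{A}},\widetilde{\mathbf{B}},\widetilde{\mathbf{C}})=p_{S'',R'}(x;\mathbf{Q}_{S\cup\{i\}}\mathbf{A}\mathbf{P}_R,\mathbf{Q}_{S\cup\{i\}}\mathbf{B},\mathbf{C}\mathbf{P}_R),
\end{equation*}
both sides being equal to $\det[x\mathbf{I}_d-(\mathbf{Q}_{S\cup S'}\mathbf{A}\mathbf{P}_{R\cup R'})^{\rm T}(\mathbf{Q}_{S\cup S'}\mathbf{A}\mathbf{P}_{R\cup R'})]$. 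For this I apply Lemma \ref{lemma2.1} directly to $[\mathbf{B}_{:,S},\mathbf{B}_{:,S'}]$ and in transposed form to $[\mathbf{C}^{\rm T}_{:,R},\mathbf{C}^{\rm T}_{:,R'}]$ to obtain the composition identities $(\mathbf{I}_n-\widetilde{\mathbf{B}}_{:,S'}\widetilde{\mathbf{B}}_{:,S'}^{\dagger})\mathbf{Q}_S=\mathbf{Q}_{S\cup S'}$ and $\mathbf{P}_R(\mathbf{I}_d-\widetilde{\mathbf{C}}_{R',:}^{\dagger}\widetilde{\mathbf{C}}_{R',:})=\mathbf{P}_{R\cup R'}$, which collapse the two nested projectors into a single projector indexed by the union.

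Substituting both identities into the reindexed LHS, the inner double sum over $R'\subset[n_C]$ of size $r-t$ and $S''\subset[d_B]\setminus\{i\}$ of size $k-l-1$ extends freely to all $S''\subset[d_B]$ of size $k-l-1$ (since $S''\ni i$ forces $\det[\mathbf{B}_{:,S''}^{\rm T}\mathbf{Q}_{S\cup\{i\}}\mathbf{B}_{:,S''}]=0$), and by Definition \ref{def2} this inner sum equals $P_{k-l-1,r-t}(x;\mathbf{Q}_{S\cup\{i\}}\mathbf{A}\mathbf{P}_R,\mathbf{Q}_{S\cup\{i\}}\mathbf{B},\mathbf{C}\mathbf{P}_R)$. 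Dividing by $k-l$ delivers \eqref{P-recursive2-AB}, and \eqref{P-recursive2-AC} follows by the analogous argument on the $\mathbf{C}$-side.

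The main obstacle is the determinantal identity: it hinges on the absorption $\mathbf{Q}_S\mathbf{B}_{:,S''}(\mathbf{Q}_S\mathbf{B}_{:,S''})^{\dagger}\mathbf{b}_i=\mathbf{Q}_S\mathbf{B}_{:,S''}(\mathbf{Q}_S\mathbf{B}_{:,S''})^{\dagger}\mathbf{Q}_S\mathbf{b}_i$, valid because the leftmost projector has range inside the range of $\mathbf{Q}_S$ and is therefore orthogonal to $\mathbf{b}_i-\mathbf{Q}_S\mathbf{b}_i\in\mathrm{range}(\mathbf{B}_{:,S})$. Once that absorption is verified, both key identities become direct applications of Lemmas \ref{lemma2.1} and \ref{lemma2.2}, and the proof assembles cleanly.
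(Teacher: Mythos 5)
Your proposal is correct and follows essentially the same route as the paper: the paper first establishes an intermediate recursion (Lemma \ref{P-recursive}) for the base case $S=R=\emptyset$ via exactly your ordered-tuple/double-counting argument together with Lemma \ref{lemma2.2} and the projector identity $p_{S\cup\{i\},R}(\cdot;\mathbf{A},\mathbf{B},\mathbf{C})=p_{S,R}(\cdot;\mathbf{Q}_{\{i\}}\mathbf{A},\mathbf{Q}_{\{i\}}\mathbf{B},\mathbf{C})$, then derives Proposition \ref{P-recursive2} by applying that lemma to $\mathbf{Q}_S\mathbf{A}\mathbf{P}_R$, $\mathbf{Q}_S\mathbf{B}$, $\mathbf{C}\mathbf{P}_R$ and collapsing nested projectors via Lemmas \ref{lemma2.0} and \ref{lemma2.1}. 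You inline that intermediate lemma into a single pass, but the reindexing, the volume identity from Lemma \ref{lemma2.2}, and the crucial composition $\bigl(\mathbf{I}_n-(\mathbf{Q}_S\mathbf{b}_i)(\mathbf{Q}_S\mathbf{b}_i)^{\dagger}\bigr)\mathbf{Q}_S=\mathbf{Q}_{S\cup\{i\}}$ are all the same ingredients the paper uses.
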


\begin{proof}
See Appendix \ref{proof of P-recursive2}.	
\end{proof}

\section{Proof of Theorem \ref{mth1-spr}  Utilizing the Method of Interlacing Polynomials}
%\label{proof of mth1-spr}

The aim of this section is to prove Theorem \ref{mth1-spr}. With the help of Proposition \ref{P-three-exp} and \ref{P-recursive2}, we can prove the following lemma using the method of interlacing polynomials.

\begin{lemma}\label{lemma:interlacing}
Let $\mathbf{A}\in\mathbb{R}^{n\times d}$, $\mathbf{B}=[\mathbf{b}_1,\ldots,\mathbf{b}_{d_B}]\in\mathbb{R}^{n\times d_{{B}}}$, and $\mathbf{C}=[\mathbf{c}_1,\ldots,\mathbf{c}_{n_C}]^{\rm T}\in\mathbb{R}^{n_{{C}}\times d}$.
Let $k$ and $r$ be two integers satisfying $1\leq k\leq \mathrm{rank}(\mathbf{B})$ and $1\leq r\leq \mathrm{rank}(\mathbf{C})$.
Let $S$ be an $l$-subset of $[d_B]$ such that $\mathrm{rank}(\mathbf{B}_{:,S})=l$, and let $R$ be a $t$-subset of $[n_C]$ such that $\mathrm{rank}(\mathbf{C}_{R,:})=t$, where
  $l$ and $t$ are two integers satisfying $0\leq l\leq k-1$ and $0\leq t\leq r-1$.
Set $W:=\{i\in[d_B]\backslash S : \Vert \mathbf{Q}_S \mathbf{b}_{i}\Vert\neq0\}$ and $V:=\{i\in[n_C]\backslash R : \Vert \mathbf{c}_{i}^{\rm T}\mathbf{P}_R \Vert\neq0\}$. Then we have the following results.
\begin{enumerate}
\item[{\rm (i)}] The polynomials $P_{k-l-1,r-t}(x;\mathbf{Q}_{S\cup\{i\}}\mathbf{A}\mathbf{P}_{R},\mathbf{Q}_{S\cup\{i\}}\mathbf{B},\mathbf{C} \mathbf{P}_{R}), i\in W$ have a common interlacing. Moreover, there exists an integer $j\in W$, such that
\begin{equation}\label{lemma:interlacing-AB}
\mathrm{maxroot}\ 	P_{k-l-1,r-t}(x;\mathbf{Q}_{S\cup\{j\}}\mathbf{A}\mathbf{P}_{R},\mathbf{Q}_{S\cup\{j\}}\mathbf{B},\mathbf{C} \mathbf{P}_{R})\leq \mathrm{maxroot}\ 	P_{k-l,r-t}(x;\mathbf{Q}_{S}\mathbf{A}\mathbf{P}_{R},\mathbf{Q}_{S}\mathbf{B},\mathbf{C} \mathbf{P}_{R}).
\end{equation}
\item [{\rm (ii)}] The polynomials $P_{k-l,r-t-1}(x;\mathbf{Q}_{S}\mathbf{A}\mathbf{P}_{R\cup\{i\}},\mathbf{Q}_{S}\mathbf{B},\mathbf{C} \mathbf{P}_{R\cup\{i\}}), i\in V$ have a common interlacing. Moreover, there exists an integer $j\in V$, such that
\begin{equation*}
\mathrm{maxroot}\ 	P_{k-l,r-t-1}(x;\mathbf{Q}_{S}\mathbf{A}\mathbf{P}_{R\cup\{j\}},\mathbf{Q}_{S}\mathbf{B},\mathbf{C} \mathbf{P}_{R\cup\{j\}})\leq \mathrm{maxroot}\ 	P_{k-l,r-t}(x;\mathbf{Q}_{S}\mathbf{A}\mathbf{P}_{R},\mathbf{Q}_{S}\mathbf{B},\mathbf{C} \mathbf{P}_{R}).
\end{equation*}

\end{enumerate}

\end{lemma}

\begin{proof}
(i) We first prove that the polynomials
\[
P_{k-l-1,r-t}(x;\mathbf{Q}_{S\cup\{i\}}\mathbf{A}\mathbf{P}_{R},\mathbf{Q}_{S\cup\{i\}}\mathbf{B},\mathbf{C} \mathbf{P}_{R}),\qquad i\in W
\]
 have a common interlacing.
By Definition \ref{def2.1}, it is enough to show that for any two distinct integers $i_1,i_2\in W$, the polynomial  $P_{k-l-1,r-t}(x;\mathbf{Q}_{S\cup\{i_1\}}\mathbf{A}\mathbf{P}_{R},\mathbf{Q}_{S\cup\{i_1\}}\mathbf{B},\mathbf{C} \mathbf{P}_{R})$ and the polynomial $P_{k-l-1,r-t}(x;\mathbf{Q}_{S\cup\{i_2\}}\mathbf{A}\mathbf{P}_{R},\mathbf{Q}_{S\cup\{i_2\}}\mathbf{B},\mathbf{C} \mathbf{P}_{R})$ have a common interlacing. This is equivalent to show that the polynomial
\begin{equation*}
\begin{aligned}
P_{\mu}(x):=&\mu\cdot P_{k-l-1,r-t}(x;\mathbf{Q}_{S\cup\{i_1\}}\mathbf{A}\mathbf{P}_{R},\mathbf{Q}_{S\cup\{i_1\}}\mathbf{B},\mathbf{C} \mathbf{P}_{R})\\
&+(1-\mu)\cdot P_{k-l-1,r-t}(x;\mathbf{Q}_{S\cup\{i_2\}}\mathbf{A}\mathbf{P}_{R},\mathbf{Q}_{S\cup\{i_2\}}\mathbf{B},\mathbf{C} \mathbf{P}_{R})
\end{aligned}
\end{equation*}
is real-rooted for any $\mu\in[0,1]$ and for any two distinct integers $i_1,i_2\in W$.

By \eqref{P-exp-1} in Proposition \ref{P-three-exp}, we can write $P_{\mu}(x)$ as
\begin{equation}\label{eq:Pmu}
P_{\mu}(x)=\frac{(-1)^{k-l-1}\cdot x^{r-t}}{ (r-t)!(d_B-k+l+1)!} \cdot \partial_y^{d_B-k+l+1}\partial_z^{r-t}\  Q_{\mu}(x,y,z) \ \big|_{y=z=0},
\end{equation}
where
\begin{equation*}
\begin{aligned}
Q_{\mu}(x,y,z)
&:=\mu\cdot \det\left[
\begin{pmatrix}
x\cdot \mathbf{I}_d +z\cdot
(\mathbf{C}\mathbf{P}_{R})^{\rm T}\mathbf{C}\mathbf{P}_{R}  & \mathbf{0}  \\
\mathbf{0}  & y\cdot \mathbf{I}_{d_B}
\end{pmatrix}
-\mathbf{M}{(S\cup\{i_1\},R)}
\right]\\
&\quad +(1-\mu)\cdot \det\left[
\begin{pmatrix}
x\cdot \mathbf{I}_d +z\cdot
(\mathbf{C}\mathbf{P}_{R})^{\rm T}\mathbf{C}\mathbf{P}_{R}  & \mathbf{0}  \\
 \mathbf{0} & y\cdot \mathbf{I}_{d_B}
\end{pmatrix}
-\mathbf{M}{(S\cup\{i_2\},R)}
\right].
\end{aligned}
\end{equation*}
Here, the $(d+d_B)\times(d+d_B)$ matrix $\mathbf{M}{(T_1,T_2)}$ corresponding to
 $T_1\subset[d_B]$ and $T_2\subset[n_C]$ is defined as
\begin{equation*}
\mathbf{M}{(T_1,T_2)}:=\begin{pmatrix}
(\mathbf{Q}_{T_1}\mathbf{A}\mathbf{P}_{T_2})^{\rm T}   \\
(\mathbf{Q}_{T_1}\mathbf{B})^{\rm T}
\end{pmatrix}
\begin{pmatrix}
\mathbf{Q}_{T_1}\mathbf{A}\mathbf{P}_{T_2} & \mathbf{Q}_{T_1}\mathbf{B}
\end{pmatrix}=\begin{pmatrix}
\mathbf{P}_{T_2}\mathbf{A}^{\rm T}   \\
\mathbf{B}^{\rm T}
\end{pmatrix}\mathbf{Q}_{T_1}
\begin{pmatrix}
\mathbf{A}\mathbf{P}_{T_2} & \mathbf{B}
\end{pmatrix}.
\end{equation*}
Note that for each $i\in W$, by Lemma \ref{lemma2.1}, we have
\begin{equation*}
\mathbf{M}{(S\cup\{i\},R)}=\begin{pmatrix}
\mathbf{P}_{R}\mathbf{A}^{\rm T}      \\
\mathbf{B}^{\rm T}
\end{pmatrix} \mathbf{Q}_{S\cup\{i\}}  \begin{pmatrix}
\mathbf{A}\mathbf{P}_{R}  &  \mathbf{B}
\end{pmatrix}=\begin{pmatrix}
\mathbf{P}_{R}\mathbf{A}^{\rm T}      \\
\mathbf{B}^{\rm T}
\end{pmatrix}  (\mathbf{Q}_{S}-\frac{\mathbf{Q}_{S}\mathbf{b}_{i}\mathbf{b}_{i}^{\rm T}\mathbf{Q}_{S}}{\Vert\mathbf{Q}_{S}\mathbf{b}_{i} \Vert^2})  \begin{pmatrix}
\mathbf{A}\mathbf{P}_{R}  &  \mathbf{B}
\end{pmatrix}=\mathbf{M}{(S,R)}-\mathbf{c}_{i}\mathbf{c}_{i}^{\rm T},
\end{equation*}
where $\mathbf{c}_{i}:=\frac{1}{\Vert\mathbf{Q}_{S}\mathbf{b}_{i} \Vert}\begin{pmatrix}
\mathbf{b}_{i}^{\rm T}\mathbf{Q}_{S} \mathbf{A}\mathbf{P}_{R}& \mathbf{b}_{i}^{\rm T}\mathbf{Q}_{S} \mathbf{B}\end{pmatrix}^{\rm T} \in\mathbb{R}^{(d+d_B)\times 1}$.
Then we can write $Q_{\mu}(x,y,z)$ as
\begin{equation*}
\begin{aligned}
 Q_{\mu}(x,y,z)&=\mu\cdot 	\det\left[
\begin{pmatrix}
x\cdot \mathbf{I}_d +z\cdot
(\mathbf{C}\mathbf{P}_{R})^{\rm T}\mathbf{C}\mathbf{P}_{R}  &  \mathbf{0} \\
\mathbf{0} & y\cdot \mathbf{I}_{d_B}
\end{pmatrix}
-\mathbf{M}{(S,R)}+\mathbf{c}_{i_1}\mathbf{c}_{i_1}^{\rm T}
\right]\\
&\quad+(1-\mu)\cdot 	\det\left[
\begin{pmatrix}
x\cdot \mathbf{I}_d +z\cdot
(\mathbf{C}\mathbf{P}_{R})^{\rm T}\mathbf{C}\mathbf{P}_{R}  &  \mathbf{0} \\
\mathbf{0}  & y\cdot \mathbf{I}_{d_B}
\end{pmatrix}
-\mathbf{M}{(S,R)}+\mathbf{c}_{i_2}\mathbf{c}_{i_2}^{\rm T}
\right]\\
&=(1+\partial_t) \det\left[\begin{pmatrix}
x\cdot \mathbf{I}_d +z\cdot
(\mathbf{C}\mathbf{P}_{R})^{\rm T}\mathbf{C}\mathbf{P}_{R}  & \mathbf{0}  \\
\mathbf{0}  & y\cdot \mathbf{I}_{d_B}
\end{pmatrix}
-\mathbf{M}{(S,R)}+t\cdot (\mu\cdot \mathbf{c}_{i_1}\mathbf{c}_{i_1}^{\rm T}+(1-\mu)\cdot \mathbf{c}_{i_2}\mathbf{c}_{i_2}^{\rm T}) \right] \Bigg|_{t=0},
\end{aligned}
\end{equation*}
where the last equation  follows from Lemma \ref{rank-one takeout}.
By Lemmas \ref{real stable} and \ref{real stable2}, the polynomial $Q_{\mu}(x,y,z)$ is real stable in $x$, $y$, and $z$ for any $\mu\in[0,1]$. Since the differential operators $\partial_y$, $\partial_z$ and setting $y=z=0$ also preserve the real stability, according to (\ref{eq:Pmu}), the polynomial $P_{\mu}(x)$ is real stable and hence real-rooted for each $\mu\in[0,1]$. Therefore, the polynomials
\[
P_{k-l-1,r-t}(x;\mathbf{Q}_{S\cup\{i\}}\mathbf{A}\mathbf{P}_{R},\mathbf{Q}_{S\cup\{i\}}\mathbf{B},\mathbf{C} \mathbf{P}_{R}),\qquad i\in W
\]
 have a common interlacing.

We next turn to prove \eqref{lemma:interlacing-AB}.  Since $\Vert\mathbf{Q}_S\mathbf{b}_{i}\Vert\neq0$ for each $i\in W$,  the polynomials
\begin{equation*}
\Vert\mathbf{Q}_S\mathbf{b}_{i}\Vert^2\cdot P_{k-l-1,r-t}(x;\mathbf{Q}_{S\cup\{i\}}\mathbf{A}\mathbf{P}_{R},\mathbf{Q}_{S\cup\{i\}}\mathbf{B},\mathbf{C} \mathbf{P}_{R}),\qquad i\in W
\end{equation*}
also have a common interlacing. Then Lemma \ref{lemma2.6} shows that there exists an integer $j\in W$ such that
\begin{equation}\label{lemma_interlacing:eq3}
\begin{aligned}
&\mathrm{maxroot}\ \Vert\mathbf{Q}_S\mathbf{b}_{j}\Vert^2\cdot	P_{k-l-1,r-t}(x;\mathbf{Q}_{S\cup\{j\}}\mathbf{A}\mathbf{P}_{R},\mathbf{Q}_{S\cup\{j\}}\mathbf{B},\mathbf{C} \mathbf{P}_{R})\\
&\leq \mathrm{maxroot}\ \sum_{i\in W}^{}\Vert\mathbf{Q}_S\mathbf{b}_{i}\Vert^2\cdot	P_{k-l-1,r-t}(x;\mathbf{Q}_{S\cup\{i\}}\mathbf{A}\mathbf{P}_{R},\mathbf{Q}_{S\cup\{i\}}\mathbf{B},\mathbf{C} \mathbf{P}_{R}).
\end{aligned}
\end{equation}
Since $\Vert\mathbf{Q}_S\mathbf{b}_{j}\Vert\neq 0$, substituting \eqref{P-recursive2-AB} into the right hand side of \eqref{lemma_interlacing:eq3}, we arrive at our conclusion.

(ii) By a similar analysis as in (i) and using \eqref{P-exp-2} in Proposition \ref{P-three-exp}, we can obtain the desired result. The proof is not included here for the sake of conciseness.

\end{proof}

Now we can prove Theorem \ref{mth1-spr} by iterating our argument in Lemma \ref{lemma:interlacing}.

\begin{proof}[Proof of Theorem \ref{mth1-spr}]
We only consider the general case where $k>0$ and $r>0$, as the procedure remains valid when either $k$ or $r$ is zero.
We begin by initializing $\widehat{S}=\widehat{R}=\emptyset$. Applying Lemma \ref{lemma:interlacing} (i), we can identify an integer $j_1\in[d_B]$
 such that $\Vert \mathbf{b}_{j_1}\Vert\neq0$  and
\begin{equation}\label{thm1proof:eq1}
\mathrm{maxroot}\ 	P_{k-1,r}(x;\mathbf{Q}_{\{j_1\}}\mathbf{A},\mathbf{Q}_{\{j_1\}}\mathbf{B},\mathbf{C})\leq \mathrm{maxroot}\ 	P_{k,r}(x;\mathbf{A},\mathbf{B},\mathbf{C}).
\end{equation}
We then update $\widehat{S}:=\{j_1\}$.
Next, we apply Lemma \ref{lemma:interlacing} (i) to the updated set $\widehat{S}=\{j_1\}$
 and $\widehat{R}=\emptyset$.
 This yields an integer $j_2$
  such that 
 $\Vert \mathbf{Q}_{\{j_1\}}\mathbf{b}_{j_2}\Vert\neq0$ and
\begin{equation*}
\mathrm{maxroot}\ 	P_{k-2,r}(x;\mathbf{Q}_{\{j_1,j_2\}}\mathbf{A},\mathbf{Q}_{\{j_1,j_2\}}\mathbf{B},\mathbf{C})\leq \mathrm{maxroot}\ 	P_{k-1,r}(x;\mathbf{Q}_{\{j_1\}}\mathbf{A},\mathbf{Q}_{\{j_1\}}\mathbf{B},\mathbf{C}).
\end{equation*}
Combining with \eqref{thm1proof:eq1}, we obtain
\begin{equation*}
\mathrm{maxroot}\ 	P_{k-2,r}(x;\mathbf{Q}_{\{j_1,j_2\}}\mathbf{A},\mathbf{Q}_{\{j_1,j_2\}}\mathbf{B},\mathbf{C})\leq \mathrm{maxroot}\ P_{k,r}(x;\mathbf{A},\mathbf{B},\mathbf{C}).
\end{equation*}
We then update $\widehat{S}:=\{j_1,j_2\}$.
 Since $\Vert \mathbf{b}_{j_1}\Vert\neq0$ and $\Vert \mathbf{Q}_{\{j_1\}}\mathbf{b}_{j_2}\Vert\neq0$, we have $\mathrm{rank}(\mathbf{B}_{:,\{j_1,j_2\}})=2$. Repeating this argument $k-2$ more times, we obtain a $k$-subset $ \widehat{S}:=\{j_1,\ldots,j_{k}\}$ such that $\mathrm{rank}(\mathbf{B}_{:, \widehat{S}})=k$ and
\begin{equation}\label{thm1proof:eq2}
\mathrm{maxroot}\ 	P_{0,r}(x;\mathbf{Q}_{\widehat{S}}\mathbf{A},\mathbf{Q}_{\widehat{S}}\mathbf{B},\mathbf{C})\leq \mathrm{maxroot}\ P_{k,r}(x;\mathbf{A},\mathbf{B},\mathbf{C}).
\end{equation}
Then, applying Lemma \ref{lemma:interlacing} (ii) to the sets $ \widehat{S}=\{j_1,\ldots,j_{k}\}$ and $\widehat{R}=\emptyset$, there exists an integer $j_1'\in[n_C]$, such that $\Vert \mathbf{c}_{j_1'}\Vert\neq0$ and
\begin{equation*}
%\label{thm1proof:eq1}
\mathrm{maxroot}\ 	P_{0,r-1}(x;\mathbf{Q}_{\widehat{S}}\mathbf{A}\mathbf{P}_{\{j_1'\}},\mathbf{Q}_{\widehat{S}}\mathbf{B},\mathbf{C}\mathbf{P}_{\{j_1'\}})\leq \mathrm{maxroot}\ 	P_{0,r}(x;\mathbf{Q}_{\widehat{S}}\mathbf{A},\mathbf{Q}_{\widehat{S}}\mathbf{B},\mathbf{C}).
\end{equation*}
Combining with \eqref{thm1proof:eq2}, we obtain
\begin{equation*}
\mathrm{maxroot}\ 	P_{0,r-1}(x;\mathbf{Q}_{\widehat{S}}\mathbf{A}\mathbf{P}_{\{j_1'\}},\mathbf{Q}_{\widehat{S}}\mathbf{B},\mathbf{C}\mathbf{P}_{\{j_1'\}})\leq \mathrm{maxroot}\ P_{k,r}(x;\mathbf{A},\mathbf{B},\mathbf{C}).
\end{equation*}
Then, we set $ \widehat{R}:=\{j_1'\}$. Similarly, repeating this argument for $r-1$ more times, we obtain an $r$-subset $ \widehat{R}:=\{j_1',\ldots,j_{r}'\}$ such that $\mathrm{rank}(\mathbf{C}_{\widehat{R},:})=r$ and
\begin{equation}\label{thm1proof:eq22}
\mathrm{maxroot}\ 	P_{0,0}(x;\mathbf{Q}_{\widehat{S}}\mathbf{A}\mathbf{P}_{\widehat{R}},\mathbf{Q}_{\widehat{S}}\mathbf{B},\mathbf{C}\mathbf{P}_{\widehat{R}})\leq \mathrm{maxroot}\ P_{k,r}(x;\mathbf{A},\mathbf{B},\mathbf{C}).
\end{equation}
According to Definition \ref{def2}, we have
\[
P_{0,0}(x;\mathbf{Q}_{\widehat{S}}\mathbf{A}\mathbf{P}_{\widehat{R}},\mathbf{Q}_{\widehat{S}}\mathbf{B},\mathbf{C}\mathbf{P}_{\widehat{R}})=\det[x\cdot \mathbf{I}_d-\mathbf{P}_{\widehat{R}}\mathbf{A}^{\rm T}\mathbf{Q}_{\widehat{S}}\mathbf{A}\mathbf{P}_{\widehat{R}}],
 \]
 which implies
\begin{equation}\label{P00}
\mathrm{maxroot}\ 	P_{0,0}(x;\mathbf{Q}_{\widehat{S}}\mathbf{A}\mathbf{P}_{\widehat{R}},\mathbf{Q}_{\widehat{S}}\mathbf{B},\mathbf{C}\mathbf{P}_{\widehat{R}})=\Vert \mathbf{Q}_{\widehat{S}}\mathbf{A}\mathbf{P}_{\widehat{R}}\Vert_2^2.	
\end{equation}
Recall that
\begin{equation}\label{QSPR1}
\mathbf{Q}_{\widehat{S}}:=\mathbf{I}_n-\mathbf{B}_{:,\widehat{S}}\mathbf{B}_{:,\widehat{S}}^{\dagger}\quad \text{and}\quad \mathbf{P}_{\widehat{R}}:=\mathbf{I}_d-\mathbf{C}_{\widehat{R},:}^{\dagger}\mathbf{C}_{\widehat{R},:}.
\end{equation}
Combining (\ref{P00}), (\ref{QSPR1}), and \eqref{thm1proof:eq22}, we obtain the desired result.

\end{proof}

\subsection{Deterministic Polynomial Time Algorithm for GCRSS}\label{section: algorithm for GCRSS}

In this subsection, we propose our deterministic polynomial time algorithm for the spectral norm case of GCRSS; see Algorithm \ref{alg-spr-GCRSS}. 
Before proceeding, we introduce a pertinent notation. 
For a real-rooted polynomial $p(x)$, we use $\lambda_{\max}^{\eta}(p)\in \mathbb{R}$ to represent an $\eta$-approximation to the largest root of $p(x)$, i.e., 
\begin{equation}\label{eps-approx}
| \lambda_{\max}^{\eta}(p)-\text{\rm maxroot}\ p|\leq \eta	.
\end{equation}

%Algorithm \ref{alg-spr-GCRSS} is based on our proof of Theorem \ref{mth1-spr}.
%At the $l$-th iteration, we find an $\eta$-approximation of the largest root of $P_{k-l,r}(x;\mathbf{Q}_{\widehat{S}\cup\{i\}}\mathbf{A},\mathbf{Q}_{\widehat{S}\cup\{i\}}\mathbf{B},\mathbf{C})$ for each $i\in [d_B]\backslash \widehat{S}$. 
%The index $i$ that minimizes $\lambda_{\max}^{\eta}( P_{k-l,r}(x;\mathbf{Q}_{\widehat{S}\cup\{i\}}\mathbf{A},\mathbf{Q}_{\widehat{S}\cup\{i\}}\mathbf{B},\mathbf{C}))$ is added to the subset $\widehat{S}$. 
%After constructing the $k$-subset $\widehat{S}\subset[d_B]$, the algorithm similarly builds the $r$-subset $\widehat{R}\subset[n_C]$.

Algorithm \ref{alg-spr-GCRSS} is grounded in our proof of Theorem \ref{mth1-spr}. The algorithm proceeds as follows:
During the $l$-th iteration, we compute an $\eta$-approximation of the largest root of $P_{k-l,r}(x;\mathbf{Q}_{\widehat{S}\cup\{i\}}\mathbf{A},\mathbf{Q}_{\widehat{S}\cup\{i\}}\mathbf{B},\mathbf{C})$
for each $i\in [d_B]\backslash \widehat{S}$.
Subsequently, we add the index $i\in [d_B]\backslash \widehat{S}$ that satisfies $\|\mathbf{Q}_{\widehat{S}}\mathbf{b}_i\|\neq 0$ and minimizes $\lambda_{\max}^{\eta}( P_{k-l,r}(x;\mathbf{Q}_{\widehat{S}\cup\{i\}}\mathbf{A},\mathbf{Q}_{\widehat{S}\cup\{i\}}\mathbf{B},\mathbf{C}))$
   to the subset $\widehat{S}$. Once the $k$-subset $\widehat{S}\subset[d_B]$  is constructed, the algorithm employs a similar approach to build the $r$-subset $\widehat{R}\subset[n_C]$.
Lemma \ref{lemma:alg-GCRSS} presents the time complexity for computing the expected polynomials in Lines 4 and 11 of Algorithm \ref{alg-spr-GCRSS}. 

\begin{lemma}\label{lemma:alg-GCRSS}
Let  $\mathbf{A}=[\mathbf{a}_1,\ldots,\mathbf{a}_d]\in\mathbb{R}^{n\times d}$, $\mathbf{B}=[\mathbf{b}_1,\ldots,\mathbf{b}_{d_B}]\in\mathbb{R}^{n\times d_B}$ and $\mathbf{C}=[\mathbf{c}_1,\ldots,\mathbf{c}_{n_C}]^{\rm T}\in\mathbb{R}^{n_C\times d}$. 
Let $k$ and $r$ be two integers satisfying $0\leq k\leq\mathrm{rank}(\mathbf{B})$ and $0\leq r\leq\mathrm{rank}(\mathbf{C})$.
 Let $\widehat{S}\subset[d_B]$ be a subset of size $|\widehat{S}|\leq k$, and let $\widehat{R}\subset[n_C]$ be a subset of size $|\widehat{R}|\leq r$. 
 Denote $\mathbf{V}=\mathbf{B}\mathbf{B}^{\rm T}\mathbf{Q}_{\widehat{S}}\in\mathbb{R}^{n\times n}$ and $\mathbf{W}=\begin{pmatrix}
\mathbf{A}   \\
\mathbf{C}
\end{pmatrix}\mathbf{P}_{\widehat{R}}[\mathbf{A}^{\rm T}\mathbf{Q}_{\widehat{S}}\ \ \mathbf{C}^{\rm T}]\in\mathbb{R}^{(n+n_C)\times (n+n_C)}$. 
Assume that the matrices $\mathbf{V}$ and $\mathbf{W}$ are known. 
Then we can compute the polynomial $P_{k-|\widehat{S}|,r-|\widehat{R}|}(x;\mathbf{Q}_{\widehat{S}}\mathbf{A}\mathbf{P}_{\widehat{R}},\mathbf{Q}_{\widehat{S}}\mathbf{B},\mathbf{C}\mathbf{P}_{\widehat{R}})$ in time $O((n_C+n)^{w+2})$, where $w \in  (2, 2.373)$ is the matrix multiplication exponent.

\end{lemma}

\begin{proof}
See Appendix \ref{Appendix-alg-GCRSS}.
\end{proof}

\begin{algorithm}[!t]
%[!h]
\caption{Deterministic polynomial time algorithm for the spectral norm case of GCRSS.}\label{alg-spr-GCRSS}
\begin{algorithmic}[1]
\Require $\mathbf{A}=[\mathbf{a}_1,\ldots,\mathbf{a}_d]\in\mathbb{R}^{n\times d}$; $\mathbf{B}=[\mathbf{b}_1,\ldots,\mathbf{b}_{d_B}]\in\mathbb{R}^{n\times d_B}$; $\mathbf{C}=[\mathbf{c}_1,\ldots,\mathbf{c}_{n_C}]^{\rm T}\in\mathbb{R}^{n_C\times d}$;  sampling parameters $k$ and $r$; error parameter $\eta>0$.
\Ensure A subset $\widehat{S}\subset[d_B]$ of cardinality $k$ and a subset $\widehat{R}\subset[n_C]$ of cardinality $r$.
\State Set $\widehat{S}=\widehat{R}=\emptyset$, $\widehat{\mathbf{Q}}=\mathbf{I}_n$ and $\widehat{\mathbf{P}}=\mathbf{I}_d$. 
Compute $\widehat{\mathbf{V}}=\mathbf{B}\mathbf{B}^{\rm T}\widehat{\mathbf{Q}}$ and $\widehat{\mathbf{W}}=\begin{pmatrix}
\mathbf{A}   \\
\mathbf{C}
\end{pmatrix}\widehat{\mathbf{P}} [\mathbf{A}^{\rm T}\widehat{\mathbf{Q}}\ \ \mathbf{C}^{\rm T}]$.

\For{$l=1,\ldots,k$}

\State For each $i\in[d_B]\backslash \widehat{S}$, compute 
$\mathbf{q}_i=\frac{\widehat{\mathbf{Q}}\mathbf{b}_i}{\|\widehat{\mathbf{Q}}\mathbf{b}_i\|}$, 
$\mathbf{Q}_{\widehat{S}\cup\{i\}}= \widehat{\mathbf{Q}}-\mathbf{q}_i\cdot \mathbf{q}_i^{\rm T}$,
$\mathbf{B}\mathbf{B}^{\rm T}\mathbf{Q}_{\widehat{S}\cup \{i\}}=\widehat{\mathbf{V}}-\widehat{\mathbf{V}}\mathbf{q}_i\cdot \mathbf{q}_i^{\rm T}$ 
and $\begin{pmatrix}
\mathbf{A}   \\
\mathbf{C}
\end{pmatrix}\mathbf{P}_{\widehat{R}} [\mathbf{A}^{\rm T}\mathbf{Q}_{\widehat{S}\cup\{i\}}\ \ \mathbf{C}^{\rm T}]
=\widehat{\mathbf{W}}-\widehat{\mathbf{W}} \begin{pmatrix}
\mathbf{q}_i\\
\mathbf{0}	\end{pmatrix}\cdot \begin{pmatrix}
\mathbf{q}_i^{\rm T} & \mathbf{0}	
\end{pmatrix}$.

\State For each $i\in[d_B]\backslash \widehat{S}$, compute the polynomial $P_{k-l,r}(x;\mathbf{Q}_{\widehat{S}\cup\{i\}}\mathbf{A},\mathbf{Q}_{\widehat{S}\cup\{i\}}\mathbf{B},\mathbf{C})$ and find an $\eta$-approximation $\lambda_{\max}^{\eta}(P_{k-l,r}(x;\mathbf{Q}_{\widehat{S}\cup\{i\}}\mathbf{A},\mathbf{Q}_{\widehat{S}\cup\{i\}}\mathbf{B},\mathbf{C}))$ using the standard technique of binary search with a Sturm sequence.
\State Find $j_{l}=\mathop{\mathrm{argmin}}_{i: \|\widehat{\mathbf{Q}}\mathbf{b}_i\|\neq 0} 	\lambda_{\max}^{\eta}\big( P_{k-l,r}(x;\mathbf{Q}_{\widehat{S}\cup\{i\}}\mathbf{A},\mathbf{Q}_{\widehat{S}\cup\{i\}}\mathbf{B},\mathbf{C})\big)$.

%\Else 
%
%\State For each $i\in[d_B]\backslash \widehat{S}$, compute  $\mathrm{Tr}(P_{k-l,r}(x;\mathbf{Q}_{\widehat{S}\cup\{i\}}\mathbf{A},\mathbf{Q}_{\widehat{S}\cup\{i\}}\mathbf{B},\mathbf{C}))$.
%\State Find $j_{l}=\mathop{\mathrm{argmin}}_{i: \|\widehat{\mathbf{Q}}\mathbf{b}_i\|\neq 0} 	\mathrm{Tr}(P_{k-l,r}(x;\mathbf{Q}_{\widehat{S}\cup\{i\}}\mathbf{A},\mathbf{Q}_{\widehat{S}\cup\{i\}}\mathbf{B},\mathbf{C}))$. 
%
%\EndIf 

\State Update 
$\widehat{\mathbf{V}}\gets\mathbf{B}\mathbf{B}^{\rm T}\mathbf{Q}_{\widehat{S}\cup \{j_l\}}$,
$\widehat{\mathbf{W}}\gets \begin{pmatrix}
\mathbf{A}   \\
\mathbf{C}
\end{pmatrix}\mathbf{P}_{\widehat{R}} [\mathbf{A}^{\rm T}\mathbf{Q}_{\widehat{S}\cup\{j_l\}}\  \mathbf{C}^{\rm T}]$,
$\widehat{\mathbf{Q}}\gets \mathbf{Q}_{\widehat{S}\cup\{j_l\}}$ 
and $\widehat{S}\gets \widehat{S}\cup \{j_{l}\}$.
\EndFor 
\State 
Compute $\widehat{\mathbf{H}}_1=\begin{pmatrix}
\mathbf{A}   \\
\mathbf{C}
\end{pmatrix}\widehat{\mathbf{P}} \in\mathbb{R}^{(n+n_C)\times d}$ 
and $\widehat{\mathbf{H}}_2=\widehat{\mathbf{P}}[\mathbf{A}^{\rm T}\mathbf{Q}_{\widehat{S}}\ \ \mathbf{C}^{\rm T}]\in\mathbb{R}^{d\times (n+n_C)}$.

\For{$l=1,\ldots,r$}

\State For each $i\in[n_C]\backslash \widehat{R}$, compute
$\mathbf{p}_i=\frac{\widehat{\mathbf{P}}\mathbf{c}_i}{\|\widehat{\mathbf{P}}\mathbf{c}_i\|}$ and  
$\begin{pmatrix}
\mathbf{A}   \\
\mathbf{C}
\end{pmatrix}\mathbf{P}_{\widehat{R}\cup\{i\}} [\mathbf{A}^{\rm T}\mathbf{Q}_{\widehat{S}}\ \ \mathbf{C}^{\rm T}]
=\widehat{\mathbf{W}}-\widehat{\mathbf{H}}_1\mathbf{p}_i\cdot \mathbf{p}_i^{\rm T}\widehat{\mathbf{H}}_2$.  

\State For each $i\in[n_C]\backslash \widehat{R}$, compute $P_{0,r-l}(x;\mathbf{Q}_{\widehat{S}}\mathbf{A}\mathbf{P}_{\widehat{R}\cup\{i\}},\mathbf{Q}_{\widehat{S}}\mathbf{B},\mathbf{C}\mathbf{P}_{\widehat{R}\cup\{i\}})$ and find an $\eta$-approximation $\lambda_{\max}^{\eta}(P_{0,r-l}(x;\mathbf{Q}_{\widehat{S}}\mathbf{A}\mathbf{P}_{\widehat{R}\cup\{i\}},\mathbf{Q}_{\widehat{S}}\mathbf{B},\mathbf{C}\mathbf{P}_{\widehat{R}\cup\{i\}}))$ for each $i\in[n_C]\backslash \widehat{R}$, using the standard technique of binary search with a Sturm sequence.
\State Find $j'_{l}=\mathop{\mathrm{argmin}}_{i: \|\mathbf{c}_i^{\rm T}\widehat{\mathbf{P}}\|\neq 0} 	\lambda_{\max}^{\eta}\big(P_{0,r-l}(x;\mathbf{Q}_{\widehat{S}}\mathbf{A}\mathbf{P}_{\widehat{R}\cup\{i\}},\mathbf{Q}_{\widehat{S}}\mathbf{B},\mathbf{C}\mathbf{P}_{\widehat{R}\cup\{i\}}) \big)$.

\State Update 
$\widehat{\mathbf{W}}\gets\begin{pmatrix}
\mathbf{A}   \\
\mathbf{C}
\end{pmatrix}\mathbf{P}_{\widehat{R}\cup\{j_l'\}} [\mathbf{A}^{\rm T}\mathbf{Q}_{\widehat{S}}\ \ \mathbf{C}^{\rm T}]$, 
$\widehat{\mathbf{H}}_1\gets \widehat{\mathbf{H}}_1-\widehat{\mathbf{H}}_1 \mathbf{p}_{j_{l}'}\cdot \mathbf{p}_{j_{l}'}^{\rm T}$,
$\widehat{\mathbf{H}}_2\gets \widehat{\mathbf{H}}_2-\mathbf{p}_{j_{l}'}\cdot \mathbf{p}_{j_{l}'}^{\rm T}\widehat{\mathbf{H}}_2 $,
$\widehat{\mathbf{P}}\gets \widehat{\mathbf{P}}-\widehat{\mathbf{P}}\mathbf{p}_{j_{l}'}\cdot \mathbf{p}_{j_{l}'}^{\rm T} $ 
and $\widehat{R}\gets \widehat{R}\cup \{j_{l}'\}$. 
\EndFor 
\\
\Return $\widehat{S}=\{j_1,j_2,\ldots,j_{k}\}\subset[d_B]$ and $\widehat{R}=\{j'_1,j'_2,\ldots,j'_{r}\}\subset[n_C]$.
\end{algorithmic}
\end{algorithm}

Now we analyze the time complexity of Algorithm \ref{alg-spr-GCRSS} and present a proof of Theorem \ref{mth:alg:spr}.

\begin{proof}[Proof of Theorem \ref{mth:alg:spr}]
We first prove \eqref{alg:eq1-GCRSS}.
At the $l$-th iteration for constructing the subset $\widehat{S}$, we let
\begin{equation*}
i_{l}:=\mathop{\mathrm{argmin}}_{i: \|{\mathbf{Q}_{\widehat{S}}}\mathbf{b}_i\|\neq 0} 	\text{\rm maxroot}\  P_{k-l,r}(x;\mathbf{Q}_{\widehat{S}\cup\{i\}}\mathbf{A},\mathbf{Q}_{\widehat{S}\cup\{i\}}\mathbf{B},\mathbf{C}).
\end{equation*}
By our choice of $j_l$ in Line 5 of Algorithm \ref{alg-spr-GCRSS}, we have
\begin{equation*}
\begin{aligned}
&\text{\rm maxroot}\  P_{k-l,r}(x;\mathbf{Q}_{\widehat{S}\cup\{j_{l}\}}\mathbf{A},\mathbf{Q}_{\widehat{S}\cup\{j_{l}\}}\mathbf{B},\mathbf{C})\\
&\overset{(a)}  \leq \lambda_{\max}^{\eta}\Big( P_{k-l,r}(x;\mathbf{Q}_{\widehat{S}\cup\{j_{l}\}}\mathbf{A},\mathbf{Q}_{\widehat{S}\cup\{j_{l}\}}\mathbf{B},\mathbf{C})\Big)+\eta 
%\\&
\leq \lambda_{\max}^{\eta}\Big(P_{k-l,r}(x;\mathbf{Q}_{\widehat{S}\cup\{i_{l}\}}\mathbf{A},\mathbf{Q}_{\widehat{S}\cup\{i_{l}\}}\mathbf{B},\mathbf{C})\Big)+\eta
\\&
 \overset{(b)}\leq  \text{\rm maxroot}\  P_{k-l,r}(x;\mathbf{Q}_{\widehat{S}\cup\{i_{l}\}}\mathbf{A},\mathbf{Q}_{\widehat{S}\cup\{i_{l}\}}\mathbf{B},\mathbf{C})+ 2\eta 
% \\&
 \overset{(c)}\leq  \text{\rm maxroot}\ P_{k-l+1,r}(x;\mathbf{Q}_{\widehat{S}}\mathbf{A},\mathbf{Q}_{\widehat{S}}\mathbf{B},\mathbf{C})  +2\eta.
\end{aligned}
\end{equation*}
Here, ($a$), ($b$) follow from the definition of an $\eta$-approximation in \eqref{eps-approx},  and (c) follows from Lemma \ref{lemma:interlacing} (i).
Starting with $\widehat{S}=\emptyset$ and applying the above argument to $l=1,2,\ldots,k$, 
we can iteratively construct the subset $\widehat{S}=\{j_1,\ldots,j_k\}\subset[d_B]$ such that
\begin{equation*}
\mathrm{maxroot}\ 	P_{0,r}(x;\mathbf{Q}_{\widehat{S}}\mathbf{A},\mathbf{Q}_{\widehat{S}}\mathbf{B},\mathbf{C})
\leq 2k\eta+\mathrm{maxroot}\ P_{k,r}(x;\mathbf{A},\mathbf{B},\mathbf{C}).
\end{equation*}
Similarly, applying this argument for constructing the subset $\widehat{R}$, we can iteratively construct the subset $\widehat{R}=\{j_1',\ldots,j_r'\}\subset[n_C]$ such that
\begin{equation*}
\begin{aligned}
\mathrm{maxroot}\ 	P_{0,0}(x;\mathbf{Q}_{\widehat{S}}\mathbf{A}\mathbf{P}_{\widehat{R}},\mathbf{Q}_{\widehat{S}}\mathbf{B},\mathbf{C}\mathbf{P}_{\widehat{R}})
&\leq 2r\eta+ \mathrm{maxroot}\ 	P_{0,r}(x;\mathbf{Q}_{\widehat{S}}\mathbf{A},\mathbf{Q}_{\widehat{S}}\mathbf{B},\mathbf{C}) \\
&\leq 2(k+r)\eta+\mathrm{maxroot}\ P_{k,r}(x;\mathbf{A},\mathbf{B},\mathbf{C}).	
\end{aligned}
\end{equation*}
Recall that the largest root of $P_{0,0}(x;\mathbf{Q}_{\widehat{S}}\mathbf{A}\mathbf{P}_{\widehat{R}},\mathbf{Q}_{\widehat{S}}\mathbf{B},\mathbf{C}\mathbf{P}_{\widehat{R}})$ is $	\Vert(\mathbf{I}_n-\mathbf{B}_{:, \widehat{S}}\mathbf{B}_{:, \widehat{S}}^{\dagger})\mathbf{A}(\mathbf{I}_d-\mathbf{C}_{\widehat{R}, :}^{\dagger}\mathbf{C}_{\widehat{R}, :}) \Vert_{2}^2$, so we arrive at \eqref{alg:eq1-GCRSS}.

We now turn to analyze the time complexity of the spectral norm case of Algorithm \ref{alg-spr-GCRSS}. 
We first show that the time complexity for constructing the column subset $\widehat{S}$ at each iteration is $O(d_B\cdot (n+n_C)^{w+2}+d_B\cdot d^2\log\frac{1}{\eta})$. 
Specifically, for each $i\in[d_B]\backslash \widehat{S}$, we first 
compute 
$\mathbf{q}_i\in\mathbb{R}^n$, 
$\mathbf{Q}_{\widehat{S}\cup\{i\}}\in\mathbb{R}^{n\times n}$,
$\mathbf{B}\mathbf{B}^{\rm T}\mathbf{Q}_{\widehat{S}\cup \{i\}}\in\mathbb{R}^{n\times n}$ 
and $\begin{pmatrix}
\mathbf{A}   \\
\mathbf{C}
\end{pmatrix}\mathbf{P}_{\widehat{R}} [\mathbf{A}^{\rm T}\mathbf{Q}_{\widehat{S}\cup\{i\}}\ \ \mathbf{C}^{\rm T}]
\in\mathbb{R}^{(n+n_C)\times (n+n_C)}$
in time $O(n^2)$, $O(n^2)$, $O(n^2)$ and $O(n(n+n_C))$, respectively. 
Lemma \ref{lemma:alg-GCRSS} shows that we can compute each polynomial $P_{k-l,r}(x;\mathbf{Q}_{\widehat{S}\cup\{i\}}\mathbf{A},\mathbf{Q}_{\widehat{S}\cup\{i\}}\mathbf{B},\mathbf{C})$ in time $O((n+n_C)^{w+2})$.
The time complexity to obtain an $\eta$-approximation to the largest root of a degree $d$ polynomial is $O(d^2\log \frac{1}{\eta})$ \cite{inter3}.
Additionally, finding $j_l$ in  Line 5 takes $O(d_B)$ time.
Thus, the time complexity of constructing the subset $\widehat{S}$ at each iteration is 
\begin{equation}\label{construct-S}
d_B\cdot O\Big( 3n^2+n(n+n_C)+(n+n_C)^{w+2}+d^2\log\frac{1}{\eta}\Big)+	O(d_B)=O\Big(d_B\cdot (n+n_C)^{w+2}+d_B\cdot d^2\log\frac{1}{\eta}\Big).
\end{equation} 

We next analyze the time complexity of constructing the subset $\widehat{R}$ at each iteration. 
For each $i\in[n_C]\backslash \widehat{R}$, we compute 
$\mathbf{p}_i\in\mathbb{R}^{d}$ 
and $\begin{pmatrix}
\mathbf{A}   \\
\mathbf{C}
\end{pmatrix}\mathbf{P}_{\widehat{R}\cup\{i\}} [\mathbf{A}^{\rm T}\mathbf{Q}_{\widehat{S}}\ \ \mathbf{C}^{\rm T}]$ in time $O(d^2)$ and $O(d(n+n_C)+(n+n_C)^2)$, respectively. 
Then, we can compute each polynomial $P_{k-l,r}(x;\mathbf{Q}_{\widehat{S}\cup\{i\}}\mathbf{A},\mathbf{Q}_{\widehat{S}\cup\{i\}}\mathbf{B},\mathbf{C})$ and its $\eta$-approximation in time $O((n+n_C)^{w+2}+d^2\log \frac{1}{\eta})$.
Finding $j_l'$ in Line 20 takes $O(n_C)$ time. 
Updating the matrices $\widehat{\mathbf{W}}$, $\widehat{\mathbf{H}}_1$, $\widehat{\mathbf{H}}_2$ and $\widehat{\mathbf{P}}$ in Line 25 takes time $O(d(n+n_C)+d^2)$.
Therefore, the time complexity of constructing the subset $\widehat{R}$ at each iteration is 
\begin{equation}\label{construct-R}
\begin{aligned}
&n_C\cdot O\Big( d^2+ d(n+n_C)+(n+n_C)^2+(n+n_C)^{w+2}+d^2\log\frac{1}{\eta}\Big)+	O(n_C)+O(d(n+n_C)+d^2)\\
&=O\Big(n_C\cdot d(n+n_C)+n_C\cdot (n+n_C)^{w+2}+n_C\cdot d^2\log\frac{1}{\eta}\Big).
\end{aligned}
\end{equation} 

Now we can analyze the total running time of the algorithm. 
In Line 1, we compute the matrices $\widehat{\mathbf{W}}\in\mathbb{R}^{(n+n_C)\times (n+n_C)}$ and $\widehat{\mathbf{V}}\in\mathbb{R}^{n\times n}$ in time $O(d(n+n_C)^2)$ and $O(d_B\cdot n^2)$, respectively.  
In Line 14, we need $O(dn^2)$ time to compute $\mathbf{A}^{\rm T}\mathbf{Q}_{\widehat{S}}\in\mathbb{R}^{d\times n}$. Then we form the matrices $\widehat{\mathbf{H}}_1$ and $\widehat{\mathbf{H}}_2$ in time $O(1)$. 
Finally, combining with \eqref{construct-S} and \eqref{construct-R}, the total time complexity of the algorithm is
\begingroup\fontsize{10pt}{12pt}\selectfont 
\begin{equation*}
\begin{aligned}
&r\cdot O\Big(n_C\cdot d(n+n_C)+n_C\cdot (n+n_C)^{w+2}+n_C\cdot d^2\log\frac{1}{\eta}\Big)+
k\cdot O\Big(d_B\cdot (n+n_C)^{w+2}+d_B\cdot d^2\log\frac{1}{\eta}\Big)\\
&+O(d(n+n_C)^2+d_B\cdot n^2+dn^2)=O\Big(dn^2+r\cdot n_C\cdot  d(n+n_C)+(k\cdot d_B+r\cdot n_C)\cdot \big((n+n_C)^{w+2}+d^2\log\frac{1}{\eta}\big)\Big).
\end{aligned}	
\end{equation*} 
\endgroup
This completes the proof.

\end{proof}

\begin{remark}\label{re:41}
Algorithm \ref{alg-spr-GCRSS} can be efficiently applied to both the generalized column subset selection (GCSS) and submatrix selection problems with reduced computational complexity.
Specifically, for the GCSS problem where $r=0$ and $\mathbf{C}=\mathbf{0}$,   we can use equation \eqref{lemma_f_k:expression2} to compute $P_{k-|\widehat{S}|}(x;\mathbf{Q}_{\widehat{S}}\mathbf{A},\mathbf{Q}_{\widehat{S}}\mathbf{B})$ at each iteration.
Since equation \eqref{lemma_f_k:expression2} involves only two variables, this computation requires only a one-dimensional Fast Fourier Transform for polynomial interpolation. 
Consequently, the time complexity for calculating each $P_{k-|\widehat{S}|}(x;\mathbf{Q}_{\widehat{S}}\mathbf{A},\mathbf{Q}_{\widehat{S}}\mathbf{B})$ is reduced to $O(n^{w+1})$ when the matrices $\mathbf{A}\mathbf{A}^{\rm T}\mathbf{Q}_{\widehat{S}}$ and $\mathbf{B}\mathbf{B}^{\rm T}\mathbf{Q}_{\widehat{S}}$ are known. 
For the submatrix selection problem where $\mathbf{B}=\mathbf{C}=\mathbf{I}_d$, we can use equation \eqref{P-exp-II-oct} to compute  $P_{k-|\widehat{S}|,r-|\widehat{R}|}(x;\mathbf{Q}_{\widehat{S}}\mathbf{A}\mathbf{P}_{\widehat{R}},\mathbf{Q}_{\widehat{S}}\mathbf{B},\mathbf{C}\mathbf{P}_{\widehat{R}})$, which does not require polynomial interpolation. Thus, the time complexity for computing each $P_{k-|\widehat{S}|,r-|\widehat{R}|}(x;\mathbf{Q}_{\widehat{S}}\mathbf{A}\mathbf{P}_{\widehat{R}},\mathbf{Q}_{\widehat{S}}\mathbf{B},\mathbf{C}\mathbf{P}_{\widehat{R}})$ is reduced to $O(d^{w})$ when the matrix $(\mathbf{A}_{S^C,R^C})^{\rm T}\mathbf{A}_{S^C,R^C}$ or $\mathbf{A}_{S^C,R^C}(\mathbf{A}_{S^C,R^C})^{\rm T}$ is known.
%Since equation \eqref{P-exp-II-oct} only involves the single variable $x$, there is no need for polynomial interpolation, reducing the time complexity to $O(d^{w})$.  
For brevity, we omit the proof of runtime analysis for these two cases. 
\end{remark}

\section{Reconstruction Bound for the Spectral Norm Case of GCSS: Proof of Theorem \ref{main thm-spr}}

The aim of this subsection is to prove Theorem \ref{main thm-spr}. 
We first prove the following lemma, which provides an upper bound on the spectral norm of a residual matrix.

\begin{lemma}\label{mth2-spr}
Let  $\mathbf{A}\in\mathbb{R}^{n\times d}$ and $\mathbf{B}\in\mathbb{R}^{n\times d_B}$. For each positive integer $1\leq k\leq \mathrm{rank}(\mathbf{B})$, we can iteratively select a subset $ \widehat{S}=\{j_1,\ldots,j_k\}\subset[d_B]$ of size $k$, such that $\mathrm{rank}(\mathbf{B}_{:, \widehat{S}})=k$ and
\begin{equation}\label{spr:bound2}
\Vert\mathbf{A}-\mathbf{B}_{:, \widehat{S}}\mathbf{B}_{:, \widehat{S}}^{\dagger}\mathbf{A} \Vert_{2}^2\leq \Vert\mathbf{A}-\mathbf{B}_{}\mathbf{B}_{}^{\dagger}\mathbf{A} \Vert_{2}^2+  \mathrm{maxroot}\ P_{k}(x;\mathbf{B}_{}\mathbf{B}_{}^{\dagger}\mathbf{A},\mathbf{B}).
\end{equation}	
\end{lemma}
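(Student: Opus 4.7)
The plan is to reduce Lemma \ref{mth2-spr} to Theorem \ref{mth1-spr} (applied with $r=0$) by means of an orthogonal decomposition of $\mathbf{A}$ into its component inside $\mathrm{range}(\mathbf{B})$ and its component orthogonal to $\mathrm{range}(\mathbf{B})$. Specifically, write
\begin{equation*}
\mathbf{A} \;=\; \mathbf{A}_1 + \mathbf{A}_2, \qquad \mathbf{A}_1 := \mathbf{B}\mathbf{B}^{\dagger}\mathbf{A}, \qquad \mathbf{A}_2 := (\mathbf{I}_n - \mathbf{B}\mathbf{B}^{\dagger})\mathbf{A}.
\end{equation*}
For any $S\subset [d_B]$, since $\mathrm{range}(\mathbf{B}_{:,S}) \subset \mathrm{range}(\mathbf{B})$, the columns of $\mathbf{A}_2$ are orthogonal to $\mathrm{range}(\mathbf{B}_{:,S})$, so $(\mathbf{I}_n - \mathbf{B}_{:,S}\mathbf{B}_{:,S}^{\dagger})\mathbf{A}_2 = \mathbf{A}_2$. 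Consequently,
\begin{equation*}
\mathbf{A} - \mathbf{B}_{:,S}\mathbf{B}_{:,S}^{\dagger}\mathbf{A} \;=\; (\mathbf{I}_n - \mathbf{B}_{:,S}\mathbf{B}_{:,S}^{\dagger})\mathbf{A}_1 \;+\; \mathbf{A}_2.
\end{equation*}

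Next I would exploit orthogonality of the column ranges of these two summands. The matrix $(\mathbf{I}_n - \mathbf{B}_{:,S}\mathbf{B}_{:,S}^{\dagger})\mathbf{A}_1$ has columns in $\mathrm{range}(\mathbf{B})$ while $\mathbf{A}_2$ has columns in $\mathrm{range}(\mathbf{B})^{\perp}$, so $((\mathbf{I}_n - \mathbf{B}_{:,S}\mathbf{B}_{:,S}^{\dagger})\mathbf{A}_1)^{\mathrm{T}}\mathbf{A}_2 = \mathbf{0}$. Therefore, for every unit vector $\mathbf{x}\in\mathbb{R}^d$, a Pythagorean identity gives
\begin{equation*}
\Vert (\mathbf{A}-\mathbf{B}_{:,S}\mathbf{B}_{:,S}^{\dagger}\mathbf{A})\mathbf{x}\Vert^2 = \Vert (\mathbf{I}_n - \mathbf{B}_{:,S}\mathbf{B}_{:,S}^{\dagger})\mathbf{A}_1\mathbf{x}\Vert^2 + \Vert \mathbf{A}_2\mathbf{x}\Vert^2,
\end{equation*}
from which the elementary inequality $\Vert \mathbf{M}_1 + \mathbf{M}_2\Vert_2^2 \leq \Vert \mathbf{M}_1\Vert_2^2 + \Vert \mathbf{M}_2\Vert_2^2$ (valid whenever the column ranges of $\mathbf{M}_1,\mathbf{M}_2$ are orthogonal) yields
\begin{equation*}
\Vert \mathbf{A}-\mathbf{B}_{:,S}\mathbf{B}_{:,S}^{\dagger}\mathbf{A}\Vert_2^2 \;\leq\; \Vert (\mathbf{I}_n - \mathbf{B}_{:,S}\mathbf{B}_{:,S}^{\dagger})\mathbf{A}_1\Vert_2^2 \;+\; \Vert \mathbf{A}_2\Vert_2^2.
\end{equation*}

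Finally, I apply Theorem \ref{mth1-spr} (in the case $r=0$, so $\mathbf{C}$ is irrelevant) to the pair $(\mathbf{A}_1, \mathbf{B})$. Iterating Lemma \ref{lemma:interlacing}(i) as in the proof of Theorem \ref{mth1-spr} delivers a subset $\widehat{S}=\{j_1,\ldots,j_k\}$ chosen one index at a time, each satisfying $\Vert \mathbf{Q}_{\{j_1,\ldots,j_{i-1}\}}\mathbf{b}_{j_i}\Vert \neq 0$, which forces $\mathrm{rank}(\mathbf{B}_{:,\widehat{S}})=k$, and for which
\begin{equation*}
\Vert (\mathbf{I}_n - \mathbf{B}_{:,\widehat{S}}\mathbf{B}_{:,\widehat{S}}^{\dagger})\mathbf{A}_1\Vert_2^2 \;\leq\; \mathrm{maxroot}\ P_k(x; \mathbf{A}_1, \mathbf{B}) \;=\; \mathrm{maxroot}\ P_k(x; \mathbf{B}\mathbf{B}^{\dagger}\mathbf{A}, \mathbf{B}).
\end{equation*}
Since $\Vert \mathbf{A}_2\Vert_2 = \Vert \mathbf{A} - \mathbf{B}\mathbf{B}^{\dagger}\mathbf{A}\Vert_2$, combining with the previous display produces (\ref{spr:bound2}). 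The only subtle step is the orthogonal-range Pythagorean inequality used to split the spectral norm; everything else is a direct invocation of Theorem \ref{mth1-spr} with the target matrix replaced by $\mathbf{B}\mathbf{B}^{\dagger}\mathbf{A}$.
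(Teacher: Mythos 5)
Your proof is correct and is essentially the same argument as the paper's. You and the paper both decompose $\mathbf{A}$ into $\mathbf{B}\mathbf{B}^{\dagger}\mathbf{A}$ and $(\mathbf{I}_n-\mathbf{B}\mathbf{B}^{\dagger})\mathbf{A}$, exploit the orthogonality of the resulting summands (the paper does so via the vanishing cross term in $(\mathbf{M}_1+\mathbf{M}_2)^{\rm T}(\mathbf{M}_1+\mathbf{M}_2)$ and subadditivity of $\Vert\cdot\Vert_2$ on PSD matrices, whereas you do it via a Pythagorean identity on $\Vert\cdot\mathbf{x}\Vert^2$ followed by a supremum; these are equivalent), and then invoke Theorem \ref{mth1-spr} with $r=0$ and target matrix $\mathbf{B}\mathbf{B}^{\dagger}\mathbf{A}$, noting that the iterative construction guarantees $\mathrm{rank}(\mathbf{B}_{:,\widehat{S}})=k$.
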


\begin{proof}
Note that for each subset $S\subset[d_B]$, we have $\mathbf{A}-\mathbf{B}_{:,{S}}\mathbf{B}_{:,{S}}^{\dagger}\mathbf{A}=(\mathbf{A}-\mathbf{B}_{ }\mathbf{B}_{ }^{\dagger}\mathbf{A})+(\mathbf{B}_{ }\mathbf{B}_{ }^{\dagger}\mathbf{A}-\mathbf{B}_{:,{S}}\mathbf{B}_{:,{S}}^{\dagger}\mathbf{A})$, so we obtain
\begin{equation}\label{Frob:eq21}
\begin{aligned}
\Vert \mathbf{A}-\mathbf{B}_{:,{S}}\mathbf{B}_{:,{S}}^{\dagger}\mathbf{A}\Vert_{2}^2&= \Vert (\mathbf{A}-\mathbf{B}_{:,{S}}\mathbf{B}_{:,{S}}^{\dagger}\mathbf{A})^{\rm T} (\mathbf{A}-\mathbf{B}_{:,{S}}\mathbf{B}_{:,{S}}^{\dagger}\mathbf{A}) \Vert_2\\
&\overset{(a)}=\Vert(\mathbf{A}-\mathbf{B}_{ }\mathbf{B}_{ }^{\dagger}\mathbf{A})^{\rm T}(\mathbf{A}-\mathbf{B}_{ }\mathbf{B}_{ }^{\dagger}\mathbf{A})+(\mathbf{B}_{ }\mathbf{B}_{ }^{\dagger}\mathbf{A}-\mathbf{B}_{:,{S}}\mathbf{B}_{:,{S}}^{\dagger}\mathbf{A})^{\rm T}(\mathbf{B}_{ }\mathbf{B}_{ }^{\dagger}\mathbf{A}-\mathbf{B}_{:,{S}}\mathbf{B}_{:,{S}}^{\dagger}\mathbf{A}) \Vert_2\\
&\overset{(b)}\leq\Vert \mathbf{A}-\mathbf{B}_{}\mathbf{B}_{}^{\dagger}\mathbf{A}\Vert_{2}^2+\Vert (\mathbf{I}_n-\mathbf{B}_{:,{S}}\mathbf{B}_{:,{S}}^{\dagger})\mathbf{B}_{ }\mathbf{B}_{ }^{\dagger}\mathbf{A}\Vert_{2}^2.
\end{aligned}
\end{equation}
Here in ($a$) and ($b$), we used the facts that $(\mathbf{I}_n-\mathbf{B}_{ }\mathbf{B}_{ }^{\dagger})\mathbf{B}_{:,S }\mathbf{B}_{:,S }^{\dagger}=\mathbf{0}_{n\times n}$ and $ \mathbf{B}_{:,S }\mathbf{B}_{:,S }^{\dagger}=\mathbf{B}_{:,S }\mathbf{B}_{:,S }^{\dagger}\mathbf{B}_{ }\mathbf{B}_{ }^{\dagger}$.
Taking $r=0$ in Theorem \ref{mth1-spr} and applying the result of Theorem \ref{mth1-spr} to the target matrix $\mathbf{B}_{ }\mathbf{B}_{ }^{\dagger}\mathbf{A}\in\mathbb{R}^{n\times d}$ and the source matrix $\mathbf{B}\in\mathbb{R}^{n\times d_B}$, we obtain that there exists a $k$-subset $\widehat{S}$ such that $\mathrm{rank}(\mathbf{B}_{:, \widehat{S}})=k$ and
\begin{equation}\label{Frob:eq22}
\Vert (\mathbf{I}_n-\mathbf{B}_{:,\widehat{S}}\mathbf{B}_{:,\widehat{S}}^{\dagger})\mathbf{B}_{ }\mathbf{B}_{ }^{\dagger}\mathbf{A}\Vert_{2}^2\leq \mathrm{maxroot}\ 	P_{k}(x;\mathbf{B}\mathbf{B}^{\dagger}\mathbf{A},\mathbf{B}).
\end{equation}
Combining \eqref{Frob:eq21} with \eqref{Frob:eq22}, we arrive at \eqref{spr:bound2}.

\end{proof}

By Lemma \ref{mth2-spr}, to obtain an upper bound on the residual $\Vert\mathbf{A}-\mathbf{B}_{:, \widehat{S}}\mathbf{B}_{:, \widehat{S}}^{\dagger}\mathbf{A} \Vert_{2}^2$, it suffices to estimate the largest root of the expected polynomial $P_{k}(x;\mathbf{B}\mathbf{B}^{\dagger}\mathbf{A},\mathbf{B})$.

\subsection{Estimation of the Largest Root of $P_{k}(x;\mathbf{B}\mathbf{B}^{\dagger}\mathbf{A},\mathbf{B})$}

The following lemma is the main result of this subsection, which provides an estimate on the largest root of $P_{k}(x;\mathbf{B}\mathbf{B}^{\dagger}\mathbf{A},\mathbf{B})$. We postpone the proof of Lemma \ref{mth2} to the end of this subsection.

\begin{lemma}\label{mth2}
Let $\mathbf{A}$ be a matrix in $\mathbb{R}^{n\times d}$, and let $\mathbf{B}\in\mathbb{R}^{n\times d_B}$ be a matrix of rank $m$. Let $b_i$ be the square of the $i$-th largest singular value of $\mathbf{B}$, and let $\mathbf{B}\mathbf{B}^{\rm T}=\mathbf{U}\cdot \mathrm{diag}(b_1,\ldots,b_m)\cdot \mathbf{U}^{\rm T}$ be the singular value decomposition of $\mathbf{B}\mathbf{B}^{\rm T}$, where $\mathbf{U}\in\mathbb{R}^{n\times m}$ satisfies $\mathbf{U}^{\rm T}\mathbf{U}=\mathbf{I}_m$. Let $\alpha$ be the largest diagonal element of the matrix $\frac{\mathbf{U}^{\rm T}\mathbf{A}\mathbf{A}^{\rm T}\mathbf{U}}{\Vert \mathbf{U}^{\rm T}\mathbf{A}\mathbf{A}^{\rm T}\mathbf{U}\Vert_2}$. Let $k\in[m-1]$ be an integer such that
\begin{equation*}
\delta_{k}:= \frac{\sum_{S\subset[m-1],\vert S\vert=k} \mathbf{b}^{S}}{	 \sum_{S\subset[m],\vert S\vert=k} \mathbf{b}^{S}}\leq(1-\sqrt{\alpha})^2.
\end{equation*}
Then, we have
\begin{equation*}
\mathrm{maxroot}\ P_{k}(x;\mathbf{B}\mathbf{B}^{\dagger}\mathbf{A},\mathbf{B})\leq \big(\sqrt{\alpha\cdot t_{}}+\sqrt{(1-\alpha)(1-t_{})}\big )^2\cdot \Vert \mathbf{B}\mathbf{B}^{\dagger}\mathbf{A}\Vert_2^2.
\end{equation*}

\end{lemma}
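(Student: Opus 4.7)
The plan is to reduce $P_k(x;\mathbf{B}\mathbf{B}^\dagger\mathbf{A},\mathbf{B})$ to an $m$-dimensional quantity, realize it as a weighted derivative of a multi-affine real-stable polynomial (an instance of the additive convolution of multi-affine polynomials in \cite{ravi4}), and then apply the multivariate barrier function method of \cite{inter2} adapted to the weighted operator induced by the $b_i$'s.

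For the reduction, I would apply Proposition \ref{P-exp-r=0} with $\tilde{\mathbf{A}} := \mathbf{B}\mathbf{B}^\dagger\mathbf{A}$. Since $\mathbf{B}\mathbf{B}^\dagger = \mathbf{U}\mathbf{U}^T$, both $\tilde{\mathbf{A}}\tilde{\mathbf{A}}^T = \mathbf{U}\mathbf{M}\mathbf{U}^T$ and $\mathbf{B}\mathbf{B}^T = \mathbf{U}\mathbf{B}_m\mathbf{U}^T$ have range in $\mathrm{Col}(\mathbf{U})$, where $\mathbf{M} := \mathbf{U}^T\mathbf{A}\mathbf{A}^T\mathbf{U}$ and $\mathbf{B}_m := \mathrm{diag}(b_1,\ldots,b_m)$. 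Conjugating by an orthogonal completion of $\mathbf{U}$ collapses the $n\times n$ determinant in Proposition \ref{P-exp-r=0} onto $\mathrm{Col}(\mathbf{U})$, and extracting the coefficient of $y^k$ via the identity $\det[\mathbf{N} + \mathrm{diag}(d_1,\ldots,d_m)] = \sum_T\prod_{i\in T}d_i \cdot \det[\mathbf{N}_{T^C,T^C}]$ yields $P_k(x;\tilde{\mathbf{A}},\mathbf{B}) = x^{d-m+k}\, Q_k(x)$, where
\[
Q_k(x) \;:=\; \sum_{T\subset[m],\,|T|=k} \mathbf{b}^T \,\det\big[x\mathbf{I}_{m-k} - \mathbf{M}_{T^C,T^C}\big].
\]
Because $\Vert\tilde{\mathbf{A}}\Vert_2^2 = \Vert\mathbf{M}\Vert_2$, normalizing by $\Vert\mathbf{M}\Vert_2$ reduces the claim to showing $\mathrm{maxroot}\, Q_k \le \alpha + 4\delta_k^{1/4}$ under the assumption $\Vert\mathbf{M}\Vert_2 = 1$ (so that $\mathbf{M}_{ii}\le\alpha$ for every $i$).

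Next I would view $Q_k$ as a weighted derivative of the multi-affine polynomial $g(x,y_1,\ldots,y_m) := \det[x\mathbf{I}_m - \mathbf{M} + \mathrm{diag}(y_1,\ldots,y_m)]$, namely $k!\,Q_k(x) = \big(\sum_{i=1}^m b_i\,\partial_{y_i}\big)^k\, g(x,\mathbf{y})\,\big|_{\mathbf{y}=\mathbf{0}}$. Real stability of $g$ (Lemma \ref{real stable}) is preserved by $\sum_i b_i\,\partial_{y_i}$ and by restriction to $\mathbf{y}=\mathbf{0}$ (Lemma \ref{real stable2}), which identifies $Q_k$ with an additive convolution of multi-affine polynomials in the sense of \cite{ravi4}. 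I would then run the multivariate barrier argument of \cite{inter2}: starting from an upper-barrier point $(u_0,\mathbf{0})$ with $u_0 > \Vert\mathbf{M}\Vert_2 = 1$, track the barrier values $\Phi^{y_i}_g(u) := \partial_{y_i}\log g|_{(u,\mathbf{0})} = [u\mathbf{I}_m-\mathbf{M}]^{-1}_{ii}$ and $\Phi^x_g(u) := \partial_x\log g|_{(u,\mathbf{0})}$, and show that each application of $\sum_i b_i\,\partial_{y_i}$ is absorbed by shifting $u$ downward by a step tailored to the weights $b_i$. After $k$ such steps, the cumulative shift, together with the initial contribution from $\alpha$ at $u_0$, bounds $\mathrm{maxroot}\,Q_k$.

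The main obstacle is the delicate barrier bookkeeping with \emph{weighted} derivatives $\sum_i b_i\,\partial_{y_i}$, which is more subtle than the uniform-weight setting of Kadison--Singer. The quantity $\delta_k = e_k(b_1,\ldots,b_{m-1}) / e_k(b_1,\ldots,b_m)$ emerges as the natural factor governing the weighted aggregate of the barrier shifts across the $k$ rounds. The fourth-root rate $\delta_k^{1/4}$, in place of the $\delta_k^{1/2}$ usual for such barrier arguments, reflects a two-parameter optimization in which the shifts in the $x$-direction and in the weighted $y$-direction must be balanced simultaneously, and optimizing jointly produces $\delta_k^{1/4}$. The additive constant $\alpha$ enters through the initial diagonal bound on $\mathbf{M}/\Vert\mathbf{M}\Vert_2$, and separating cleanly the contributions from the initial diagonal and from the $k$ barrier updates is what yields the additive combination $\alpha + 4\delta_k^{1/4}$ rather than a multiplicative one.
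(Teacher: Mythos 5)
Your reduction to an $m$-dimensional problem is correct, and the representation $k!\,Q_k(x) = (\sum_i b_i\partial_{y_i})^k\det[x\mathbf{I}_m-\mathbf{M}+\mathrm{diag}(\mathbf{y})]\big|_{\mathbf{y}=0}$ with $\mathbf{M}=\mathbf{U}^{\rm T}\mathbf{A}\mathbf{A}^{\rm T}\mathbf{U}$ does recover the expected polynomial: this is algebraically equivalent to the paper's Proposition~\ref{fk:exp3}, which instead writes $P_k$ as the convolution $\prod_i\partial_{z_i}\bigl(h_k(\mathbf{z})\det[\mathbf{Z}-\mathbf{M}]\bigr)\big|_{z_i=x/2}$ with $h_k=(\sum_i b_i\partial_{z_i})^k\prod_i z_i$. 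Up to that point you are on the same track as the paper.

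The genuine gap is that the step carrying all the weight --- bounding the largest root of this polynomial by $\alpha+4\delta_k^{1/4}$ --- is only gestured at. You write that you ``would then run the multivariate barrier argument of \cite{inter2}'' and that the threshold $\delta_k\le(1-\sqrt{\alpha})^2$, the fourth-root exponent, and the precise constant $4$ all ``emerge'' from a two-parameter optimization, but none of this is derived, and the exponent $1/4$ in particular is not what a naive one-parameter barrier shift delivers (standard soft-invertibility-style bookkeeping produces a square-root rate). The paper does not reprove this either: it cites \cite[Theorem~22]{ravi4} (Lemma~\ref{lemma:ravi4}), which is exactly the statement that for a homogeneous real-stable multi-affine $h$ with $\partial_{z_i}h/h\big|_{\mathbf{z}=\mathbf{1}}\le\delta\le(1-\sqrt{\alpha})^2$ and $\mathbf{0}\preceq\mathbf{M}\preceq\mathbf{I}_m$ with largest diagonal entry $\alpha$, the convolution $\prod_i\partial_{z_i}\bigl(h\cdot\det[\mathbf{Z}-\mathbf{M}]\bigr)\big|_{z_i=x/2}$ has max-root at most $\alpha+4\delta^{1/4}$. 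To close the argument you therefore need either to invoke that theorem (in which case the convolution form of Proposition~\ref{fk:exp3} is the right normalization, not your $(\sum b_i\partial_{y_i})^k$ form) or to actually execute the bivariate barrier optimization it encodes.

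There is also a concrete verification you skip: applying Ravichandran's theorem requires $\max_i\,\partial_{z_i}h_k/h_k\big|_{\mathbf{z}=\mathbf{1}}\le\delta_k$. The paper proves this in one line from $\partial_{z_i}h_k/h_k\big|_{\mathbf{z}=\mathbf{1}}=e_k(\mathbf{b}_{[m]\setminus\{i\}})/e_k(\mathbf{b})$ together with the ordering $b_1\ge\cdots\ge b_m>0$, which makes $i=m$ the worst case and yields exactly $\delta_k=e_k(b_1,\ldots,b_{m-1})/e_k(b_1,\ldots,b_m)$. You identify $\delta_k$ correctly but never tie it to the hypothesis of the theorem you are implicitly leaning on. Separately, note that your description of iterating $k$ rounds of $\sum_i b_i\partial_{y_i}$ does not mirror the structure of the cited barrier argument, which peels off the $m$ derivatives $\partial_{z_1},\ldots,\partial_{z_m}$ from the product $h_k\cdot\det[\mathbf{Z}-\mathbf{M}]$; the two bookkeepings are not obviously interchangeable, so you cannot assume the same constants survive without redoing the analysis.
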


 To prove Lemma \ref{mth2}, we start with presenting a new expression of the expected polynomial $P_{k}(x;\mathbf{B}\mathbf{B}^{\dagger}\mathbf{A},\mathbf{B})$, which establish a close connection between the expected polynomial $P_{k}(x;\mathbf{B}\mathbf{B}^{\dagger}\mathbf{A},\mathbf{B})$ and the convolution of multi-affine polynomials.

\begin{prop}\label{fk:exp3}
Let $\mathbf{A}\in\mathbb{R}^{n\times d}$, and let $\mathbf{B}\in\mathbb{R}^{n\times d_B}$ be a rank-$m$ matrix. Let $\mathbf{B}\mathbf{B}^{\rm T}=\mathbf{U}\mathbf{\Sigma}\mathbf{U}^{\rm T}$ be the singular value decomposition  of $\mathbf{B}\mathbf{B}^{\rm T}$, where $\mathbf{U}\in\mathbb{R}^{n\times m}$ satisfies $\mathbf{U}^{\rm T}\mathbf{U}=\mathbf{I}_m$, $\mathbf{\Sigma}=\mathrm{diag}(b_1,\ldots,b_m)\in\mathbb{R}^{m\times m}$, and $b_i> 0$ for each $i\in[m]$. Then for each integer $k\in [1, m]$ we have
\begin{equation}\label{section5:fk}
P_{k}(x;\mathbf{B}\mathbf{B}^{\dagger}\mathbf{A},\mathbf{B})=\frac{x^{d+k-m}}{k!}\cdot	 \Big(\prod_{i=1}^m \partial_{z_i}\Big)\cdot h_k(z_1,\ldots,z_m)\cdot \det[\mathbf{Z}- \mathbf{U}^{\rm T}\mathbf{A}\mathbf{A}^{\rm T}\mathbf{U} ]\ \Big|_{z_i=\frac{x}{2},\forall i\in[m]},
\end{equation}
where $\mathbf{Z}:=\mathrm{diag}(z_1,\ldots,z_m)$ and $h_k(z_1,\ldots,z_m)$ is a multi-affine polynomial defined as
\begin{equation}\label{def-hkx}
h_k(z_1,\ldots,z_m):=(b_1\partial_{z_1}+\cdots +b_m\partial_{z_m})^k\cdot \prod_{i=1}^mz_i=k!\sum_{S\subset[m],\vert S\vert=m-k} \mathbf{b}^{S^C}\cdot \mathbf{Z}^S.
\end{equation}

\end{prop}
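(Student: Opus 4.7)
The plan is to start from the one-variable formula \eqref{lemma_f_k:expression2} of Proposition \ref{P-exp-r=0} applied to $(\mathbf{B}\mathbf{B}^\dagger\mathbf{A},\mathbf{B})$, reduce the $n$-dimensional determinant to an $m$-dimensional one via Weinstein--Aronszajn, convert the $y$-derivative into a directional operator in new variables $z_1,\ldots,z_m$, and finally recognise the resulting expression as the one appearing on the right-hand side of \eqref{section5:fk} via a purely combinatorial identity for multi-affine polynomials.

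Concretely, first I would observe that since $\mathbf{B}\mathbf{B}^{\rm T}=\mathbf{U}\mathbf{\Sigma}\mathbf{U}^{\rm T}$ with $\mathbf{U}^{\rm T}\mathbf{U}=\mathbf{I}_m$ and $\mathbf{\Sigma}$ invertible, the range of $\mathbf{B}$ coincides with the column space of $\mathbf{U}$, so $\mathbf{B}\mathbf{B}^\dagger=\mathbf{U}\mathbf{U}^{\rm T}$. Therefore, setting $\mathbf{A}':=\mathbf{B}\mathbf{B}^\dagger\mathbf{A}$ one has
\[
x\,\mathbf{I}_n-\mathbf{A}'(\mathbf{A}')^{\rm T}-y\,\mathbf{B}\mathbf{B}^{\rm T}=x\,\mathbf{I}_n-\mathbf{U}\bigl(\mathbf{U}^{\rm T}\mathbf{A}\mathbf{A}^{\rm T}\mathbf{U}+y\mathbf{\Sigma}\bigr)\mathbf{U}^{\rm T}.
\]
Applying the Weinstein--Aronszajn identity \eqref{Weinstein-Aronszajn2} with $\mathbf{A}_{\rm WA}=\mathbf{U}^{\rm T}$ gives
\[
\det\!\bigl[x\mathbf{I}_n-\mathbf{U}(\mathbf{M}+y\mathbf{\Sigma})\mathbf{U}^{\rm T}\bigr]=x^{n-m}\det\!\bigl[x\mathbf{I}_m-\mathbf{M}-y\mathbf{\Sigma}\bigr],
\]
where $\mathbf{M}:=\mathbf{U}^{\rm T}\mathbf{A}\mathbf{A}^{\rm T}\mathbf{U}$. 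The chain rule then converts $\partial_y^k$ at $y=0$ into a $z$-directional operator:
\[
\partial_y^k\,\det[x\mathbf{I}_m-\mathbf{M}-y\mathbf{\Sigma}]\big|_{y=0}=(-1)^k\bigl(b_1\partial_{z_1}+\cdots+b_m\partial_{z_m}\bigr)^k\det[\mathbf{Z}-\mathbf{M}]\big|_{z_i=x}.
\]
Substituting into \eqref{lemma_f_k:expression2} and absorbing the two factors of $(-1)^k$ yields
\[
P_k(x;\mathbf{B}\mathbf{B}^\dagger\mathbf{A},\mathbf{B})=\frac{x^{d+k-m}}{k!}\bigl(b_1\partial_{z_1}+\cdots+b_m\partial_{z_m}\bigr)^k\det[\mathbf{Z}-\mathbf{M}]\Big|_{z_i=x}.
\]

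It then suffices to prove the combinatorial identity: for every multi-affine polynomial $p(\mathbf{z})=\sum_{S\subset[m]}p_S\mathbf{Z}^S$,
\[
\bigl(b_1\partial_{z_1}+\cdots+b_m\partial_{z_m}\bigr)^k p(\mathbf{z})\big|_{z_i=x}=\Bigl(\prod_{i=1}^m\partial_{z_i}\Bigr)\bigl[h_k(\mathbf{z})\,p(\mathbf{z})\bigr]\Big|_{z_i=x/2}.
\]
My approach is to expand both sides and match coefficients. On the left, multi-affinity of $p$ forces all mixed partials $\partial_{z_i}^{\geq 2}$ to vanish, so $(b\cdot\partial)^k p=k!\sum_{|T|=k}\mathbf{b}^T\partial_{\mathbf{z}^T}p$, yielding $k!\sum_{S}p_S x^{|S|-k}\sum_{T\subseteq S,\ |T|=k}\mathbf{b}^T$. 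On the right, using $h_k=k!\sum_{|T|=k}\mathbf{b}^T\mathbf{Z}^{T^C}$, the product $\mathbf{Z}^{T^C}\mathbf{Z}^S$ has exponent $\mathbf{1}_{i\in T^C}+\mathbf{1}_{i\in S}\in\{0,1,2\}$ in $z_i$; applying $\prod_i\partial_{z_i}$ and evaluating at $z_i=x/2$ annihilates any term with a zero exponent (forcing $T\subseteq S$) and, for the surviving terms, the factor $2$ from $\partial_{z_i}(z_i^2)$ cancels with the evaluation $z_i=x/2$, producing exactly $x^{|S|-k}$. Both sides therefore equal the same double sum.

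Finally, applying this identity with $p(\mathbf{z})=\det[\mathbf{Z}-\mathbf{M}]$, which is multi-affine because $\mathbf{Z}$ is diagonal, delivers \eqref{section5:fk}. I expect Step~4 (the combinatorial identity) to be the main obstacle, since it demands careful bookkeeping of the degree-two monomials that appear in $h_k\cdot p$ and verification that the combinatorial factors from multi-affine differentiation cancel cleanly against the rescaling $z_i\mapsto x/2$; the rest of the argument is essentially algebraic manipulation using results already established in the paper.
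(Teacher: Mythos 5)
Your proof is correct, and while it ultimately rests on the same combinatorics, the route is genuinely cleaner than the paper's. The paper also begins from~\eqref{lemma_f_k:expression2} and applies Weinstein--Aronszajn to pass to the $m\times m$ matrix $\widetilde{\mathbf{A}}=\mathbf{U}^{\rm T}\mathbf{A}\mathbf{A}^{\rm T}\mathbf{U}$, but then it expands $\det[\mathrm{diag}(x-yb_1,\ldots,x-yb_m)-\widetilde{\mathbf{A}}]$ as a bivariate polynomial in $(x,y)$, extracts the coefficient $c_{j,k}$ of $y^k$ via $\partial_y^k|_{y=0}$, separately expands the right-hand side of~\eqref{section5:fk} into coefficients $\widehat{c}_j$, and finishes by verifying the subset-sum identity $\widehat{c}_j=c_{j,k}$. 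Your version instead performs the substitution $z_i=x-yb_i$ explicitly and observes by the chain rule that $\partial_y^k|_{y=0}=(-1)^k(b_1\partial_{z_1}+\cdots+b_m\partial_{z_m})^k|_{z_i=x}$, which converts the problem into the statement that $(b\cdot\partial)^k p(\mathbf{z})|_{z_i=x}=\bigl(\prod_i\partial_{z_i}\bigr)[h_k\cdot p]\bigl|_{z_i=x/2}$ for every multi-affine $p$. This isolated lemma is exactly the evaluation at the diagonal of Ravichandran's multi-affine convolution $h_k\ast p$ (compare~\eqref{convolution-def1}), and your coefficient check of it is essentially the same subset bookkeeping the paper does with $c_{j,k}$ and $\widehat{c}_j$, but stated once and for a general $p$ rather than entangled with the specific determinant. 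What this buys is modularity: the ``analytic'' part (Weinstein--Aronszajn plus chain rule) and the ``combinatorial'' part (the multi-affine convolution identity) are cleanly separated, and the latter becomes a reusable fact rather than an ad hoc coefficient comparison. The only small thing worth spelling out when you write this up is the chain-rule induction, namely that $\partial_y^k\,g(x,y)=(-1)^k[(b\cdot\partial)^k f](\mathbf{z})\big|_{z_i=x-yb_i}$ before setting $y=0$, so that the directional derivative genuinely commutes past the evaluation; that is routine but should be stated.
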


\begin{proof}
See Appendix \ref{Appendix3}.	
\end{proof}

\begin{remark}
By Proposition \ref{fk:exp3}, we can express the expected polynomial $P_{k}(x;\mathbf{B}\mathbf{B}^{\dagger}\mathbf{A},\mathbf{B})$ in terms of the convolution of multi-affine polynomials introduced by Ravichandran in \cite{ravi4}. Given two multi-affine polynomials $p(z_1,\ldots,z_m)=\sum_{S\subset[m]}a_S\cdot \mathbf{Z}^S$ and $q(z_1,\ldots,z_m)=\sum_{S\subset[m]}b_S\cdot \mathbf{Z}^S$, the convolution of $p$ and $q$ is a multi-affine polynomial defined as
\begin{equation*}\label{convolution-def1}
(p\ast q)(z_1,\ldots,z_m):=\Big(\prod_{i=1}^m \partial_{y_i}\Big)\cdot p(y_1,\ldots,y_m)\cdot q(y_1,\ldots,y_m)\ \Big|_{y_i=\frac{z_i}{2},\forall i\in[m]}.
\end{equation*}
With the notion of the convolution of multi-affine polynomials, we can rewrite \eqref{section5:fk} as
\begin{equation*}
P_{k}(x;\mathbf{B}\mathbf{B}^{\dagger}\mathbf{A},\mathbf{B})= \frac{x^{d+k-m}}{k!}\cdot \Big(	h_k(z_1,\ldots,z_m)\ast \det[\mathbf{Z}- \mathbf{U}^{\rm T}\mathbf{A}\mathbf{A}^{\rm T}\mathbf{U}]\Big)\ \Big|_{z_i=x,\forall i\in[m]}.
\end{equation*}
For more details about convolution of multi-affine polynomials and its relationship with the additive convolution of univariate polynomials,  we refer to \cite{inter5,ravi4,ravi2}.

\end{remark}

Remark \ref{remark:realrootedness of Pkr} implies that $P_{k}(x;\mathbf{B}\mathbf{B}^{\dagger}\mathbf{A},\mathbf{B})$ has only nonnegative real roots.
Therefore, Proposition \ref{fk:exp3} shows that, to estimate the largest root of $P_{k}(x;\mathbf{B}\mathbf{B}^{\dagger}\mathbf{A},\mathbf{B})$, it suffices to estimate the largest root of the polynomial on the right-hand side of \eqref{section5:fk}. 
We need Ravichandran's estimate for the zero free region of the convolution of two multi-affine polynomials. The following lemma is essentially proved in \cite[Theorem 22]{ravi4}, using the method of multivariate barrier method introduced by Batson-Marcus-Spielman-Srivastava in \cite{inter0,inter2}.
We would like to mention that the upper bound $(\sqrt{\alpha\cdot  t}+\sqrt{(1-\alpha)(1-t)})^2$ in \eqref{lemma:ravi4:bound} is an intermediate result of \cite[Theorem 22]{ravi4}. 
Ravichandran first derived this bound and then showed that this bound is no greater than $\alpha+4\cdot \delta^{1/4}$.

\begin{lemma}[{\cite[Theorem 22]{ravi4}}]\label{lemma:ravi4}
Let $h(z_1,\ldots,z_m)=\sum_{S\subset[m]}\mu_S\cdot \mathbf{Z}^S$ be a homogeneous multi-affine polynomial, where $\mu_S\geq 0$ for each $S$.
Assume that $h(z_1,\ldots,z_m)$ is real stable and
\begin{equation*}
\frac{\partial_{z_i}h}{h}\ \Big|_{z_1=\cdots=z_m=1}\leq \delta,\quad\forall \ i\in[m].	
\end{equation*}
Let $\mathbf{M}\in\mathbb{R}^{m\times m}$ be a positive semidefinite matrix satisfying $\mathbf{0}\preceq\mathbf{M}\preceq\mathbf{I}_m$. Let $\alpha$ be the largest diagonal element of $\mathbf{M}$. If $\delta\leq (1-\sqrt{\alpha})^2$, then
\begin{equation}\label{lemma:ravi4:bound}
\mathrm{maxroot}\ \Big(\prod_{i=1}^m \partial_{z_i}\Big)\cdot h(z_1,\ldots,z_m)\cdot \det[\mathbf{Z}- \mathbf{M}]\ \Big|_{z_i=\frac{x}{2},\forall i\in[m]}
\leq \big(\sqrt{\alpha\cdot t}+\sqrt{(1-\alpha)(1-t)}\big )^2.
\end{equation}
\end{lemma}

With the help of Proposition \ref{fk:exp3} and Lemma \ref{lemma:ravi4}, we can provide a proof of Lemma \ref{mth2}.

\begin{proof}[Proof of Lemma \ref{mth2}]
Set
\[
h_k(z_1,\ldots,z_m):=(b_1\partial_{z_1}+\cdots +b_m\partial_{z_m})^k\cdot \prod_{i=1}^mz_i.
 \]
 The real stability of $h_k(z_1,\ldots,z_m)$ follows from Lemma \ref{real stable2} (iii).  Note that, for each $i\in[m]$,
\begin{equation*}
\frac{\partial_{z_i}h_k}{h_k}\ \Big|_{z_1=\cdots=z_m=1}=	\frac{\sum_{S\subset[m]\backslash\{i\},\vert S\vert=k} \mathbf{b}^{S}}{	 \sum_{S\subset[m],\vert S\vert=k} \mathbf{b}^{S}}\leq 	\frac{\sum_{S\subset[m-1],\vert S\vert=k} \mathbf{b}^{S}}{	 \sum_{S\subset[m],\vert S\vert=k} \mathbf{b}^{S}}=\delta_{k},
\end{equation*}
where the inequality ``$\leq$" follows from the assumption that $b_1\geq b_2\geq\cdots \geq b_m>0$. When $\delta_{k}\leq(1-\sqrt{\alpha})^2$, applying Lemma \ref{lemma:ravi4} with $\mathbf{M}=\mathbf{U}^{\rm T}\mathbf{A}\mathbf{A}^{\rm T}\mathbf{U}/\Vert\mathbf{U}^{\rm T}\mathbf{A}\mathbf{A}^{\rm T}\mathbf{U} \Vert_2$, we have
\begin{equation*}
\begin{aligned}
&\quad\ \mathrm{maxroot}\ \Big(\prod_{i=1}^m \partial_{z_i}\Big)\cdot h_k(z_1,\ldots,z_m)\cdot \det[\mathbf{Z}- \mathbf{U}^{\rm T}\mathbf{A}\mathbf{A}^{\rm T}\mathbf{U} ]\ \Big|_{z_i=\frac{x}{2},\forall i\in[m]}\\
&\overset{(a)}=\Vert\mathbf{U}^{\rm T}\mathbf{A}\mathbf{A}^{\rm T}\mathbf{U} \Vert_2\cdot \mathrm{maxroot}\ \Big(\prod_{i=1}^m \partial_{z_i}\Big)\cdot h_k(z_1,\ldots,z_m)\cdot \det[\mathbf{Z}- \frac{\mathbf{U}^{\rm T}\mathbf{A}\mathbf{A}^{\rm T}\mathbf{U}}{\Vert\mathbf{U}^{\rm T}\mathbf{A}\mathbf{A}^{\rm T}\mathbf{U} \Vert_2}]\ \Big|_{z_i=\frac{x}{2},\forall i\in[m]}\\
&\leq \big(\sqrt{\alpha\cdot t}+\sqrt{(1-\alpha)(1-t)}\big )^2\cdot \Vert\mathbf{U}^{\rm T}\mathbf{A}\mathbf{A}^{\rm T}\mathbf{U} \Vert_2.
\end{aligned}
\end{equation*}
Here, in the equality $\overset{(a)}=$, we used the chain rule of differentiation and the fact that $h_k(z_1,\ldots,z_m)$ is a homogeneous multi-affine polynomial.
Since $P_{k}(x;\mathbf{B}\mathbf{B}^{\dagger}\mathbf{A},\mathbf{B})$ has only nonnegative real roots, by Proposition \ref{fk:exp3} we obtain
\begin{equation*}
\begin{aligned}
\mathrm{maxroot}\ P_{k}(x;\mathbf{B}\mathbf{B}^{\dagger}\mathbf{A},\mathbf{B})&=\mathrm{maxroot}\ \Big(\prod_{i=1}^m \partial_{z_i}\Big)\cdot h_k(z_1,\ldots,z_m)\cdot \det[\mathbf{Z}- \mathbf{U}^{\rm T}\mathbf{A}\mathbf{A}^{\rm T}\mathbf{U} ]\ \Big|_{z_i=\frac{x}{2},\forall i\in[m]}\\
&\leq \big(\sqrt{\alpha\cdot t}+\sqrt{(1-\alpha)(1-t)}\big )^2\cdot \Vert\mathbf{U}^{\rm T}\mathbf{A}\mathbf{A}^{\rm T}\mathbf{U} \Vert_2.
\end{aligned}
\end{equation*}
Finally, noting that $\mathbf{U}^{\rm T}\mathbf{U}=\mathbf{I}_m$, $\mathbf{U}\mathbf{U}^{\rm T}=\mathbf{B}\mathbf{B}^{\dagger}$ and
\begin{equation*}\label{eqvov5}
\Vert\mathbf{U}^{\rm T}\mathbf{A}\mathbf{A}^{\rm T}\mathbf{U} \Vert_2
=\Vert\mathbf{U}^{\rm T}\mathbf{U}\cdot \mathbf{U}^{\rm T}\mathbf{A}\mathbf{A}^{\rm T}\mathbf{U} \Vert_2
=\Vert\mathbf{U} \mathbf{U}^{\rm T}\mathbf{A}\mathbf{A}^{\rm T}\mathbf{U}\mathbf{U}^{\rm T} \Vert_2=\Vert \mathbf{B}\mathbf{B}^{\dagger}\mathbf{A}\Vert_2^2,	
\end{equation*}
we arrive at our conclusion.
\end{proof}

\subsubsection{Proof of Theorem \ref{main thm-spr}}
%\label{section5:alpha}

Combining Lemma \ref{mth2-spr} and Lemma \ref{mth2}, we can provide a proof of Theorem \ref{main thm-spr}.

\begin{proof}[Proof of Theorem \ref{main thm-spr}]
By Lemma \ref{mth2-spr} we see that the residual $\Vert\mathbf{A}-\mathbf{B}_{:, \widehat{S}}\mathbf{B}_{:, \widehat{S}}^{\dagger}\mathbf{A} \Vert_{2}^2$ is bounded above by $\Vert\mathbf{A}-\mathbf{B}_{}\mathbf{B}_{}^{\dagger}\mathbf{A} \Vert_{2}^2$ plus the largest root of $P_{k}(x;\mathbf{B}\mathbf{B}^{\dagger}\mathbf{A},\mathbf{B})$. Lemma \ref{mth2} shows that the largest root of $P_{k}(x;\mathbf{B}\mathbf{B}^{\dagger}\mathbf{A},\mathbf{B})$ is at most $\varepsilon\cdot \Vert \mathbf{B}\mathbf{B}^{\dagger}\mathbf{A}\Vert_2^2$. Therefore, the residual $\Vert\mathbf{A}-\mathbf{B}_{:, \widehat{S}}\mathbf{B}_{:, \widehat{S}}^{\dagger}\mathbf{A} \Vert_{2}^2$ is bounded above by $\Vert\mathbf{A}-\mathbf{B}_{}\mathbf{B}_{}^{\dagger}\mathbf{A} \Vert_{2}^2+\varepsilon\cdot \Vert \mathbf{B}\mathbf{B}^{\dagger}\mathbf{A}\Vert_2^2$.

\end{proof}

\section{Submatrix with Bounded Spectral Norm: Proof of Theorem \ref{main thm-submatrix}}

In this section, we aim to prove Theorem \ref{main thm-submatrix}.
To begin, we define $\mathbb{P}^+(d)$ as the set of real-rooted univariate polynomials of degree exactly $d$ that have a positive leading coefficient and only nonnegative roots.

\subsection{Estimation of the Largest Root of $P_{k,k}(x;\mathbf{A},\mathbf{I}_d,\mathbf{I}_d)$}

We first estimate the largest root of $P_{k,k}(x;\mathbf{A},\mathbf{B},\mathbf{C})$ for the case where $\mathbf{B}=\mathbf{C}=\mathbf{I}_d$.
Recall that by Proposition \ref{P-exp-I}, we have
\begin{equation*}
P_{k,k}(x;\mathbf{A},\mathbf{I}_d,\mathbf{I}_d)=\frac{1}{(k!)^2}\cdot x^k\cdot   (\partial_x\cdot x\cdot \partial_x)^k\  \det[x\cdot \mathbf{I}_{d}-\mathbf{A}^{\rm T}\mathbf{A}].	
\end{equation*}
Since the roots of $P_{k,k}(x;\mathbf{A},\mathbf{I}_d,\mathbf{I}_d)$ are all nonnegative, to estimate the largest root of $P_{k,k}(x;\mathbf{A},\mathbf{I}_d,\mathbf{I}_d)$, it suffices to estimate the largest root of $(\partial_x\cdot x\cdot \partial_x)^k\  \det[x\cdot \mathbf{I}_{d}-\mathbf{A}^{\rm T}\mathbf{A}]$.

Our main result in this subsection is the following lemma, which provides an estimate on the largest root of $(\partial_x\cdot x\cdot \partial_x)^k\  p(x)$, where $p(x)$ is a polynomial in $\mathbb{P}^+(d)$. We postpone the proof of Lemma \ref{estimate-DxD} to the end of this subsection.

\begin{lemma}\label{estimate-DxD}
Let $d\geq 2$ and let $p(x)=\prod_{i=1}^d(x-\lambda_i)\in \mathbb{P}^+(d)$, where $\lambda_i$ is the $i$-th largest root of $p(x)$. Assume that $0\leq \lambda_i\leq 1$ for each $i\in[d]$. Set $\beta=\frac{1}{d}\sum_{i=1}^d\lambda_i\in[0,1]$. For each positive integer $k>\frac{\beta}{\beta+1}\cdot d$, we have
\begin{equation*}
\mathrm{maxroot}\ (\partial_x\cdot x\cdot \partial_x)^k\ p(x)	\leq   \Bigg(\Big(1-\beta\Big)\Big(1-\frac{k}{d}\Big)+ 2\sqrt{\Big(1-\frac{k}{d}\Big)\cdot \frac{k}{d}\cdot \beta}\Bigg)^2.
\end{equation*}

\end{lemma}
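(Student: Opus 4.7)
The plan is to analyze the Laguerre-type differential operator $\partial_x \cdot x \cdot \partial_x$ using the barrier function method of Marcus--Spielman--Srivastava. A key preliminary observation is the operator identity
\begin{equation*}
(\partial_x \cdot x \cdot \partial_x)^k \;=\; \partial_x^k \cdot x^k \cdot \partial_x^k,
\end{equation*}
which can be verified by comparing the action on monomials: both sides send $x^n$ to $(n!/(n-k)!)^2 \, x^{n-k}$. Consequently, bounding $\mathrm{maxroot}\ (\partial_x x \partial_x)^k p(x)$ reduces to a three-stage analysis on $p$: first apply $\partial_x^k$ to obtain $q(x) := \partial_x^k p(x)$, then multiply by $x^k$, then apply $\partial_x^k$ once more.

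For each of the two derivative stages I will invoke the MSS barrier shift: for a polynomial $g$ with nonnegative roots and any $b > \mathrm{maxroot}\ g$, if the upper barrier $\Phi_g(b) := g'(b)/g(b) = \sum_{i} 1/(b-\lambda_i)$ satisfies $\Phi_g(b) \leq \phi$, then $\mathrm{maxroot}\ \partial_x^k g \leq b - k/\phi$. In the first stage this yields $\mathrm{maxroot}\ q \leq b - k/\Phi_p(b)$. Multiplying by $x^k$ introduces $k$ roots at the origin, so the barrier of $x^k q$ at any $b_1 > \mathrm{maxroot}\ q$ satisfies $\Phi_{x^k q}(b_1) = k/b_1 + \Phi_q(b_1)$; a second application of the shift lemma to $\partial_x^k$ then gives
\begin{equation*}
\mathrm{maxroot}\ (\partial_x x \partial_x)^k p \;\leq\; b_1 - \frac{k}{k/b_1 + \Phi_q(b_1)} \;=\; \frac{b_1^2 \, \Phi_q(b_1)}{k + b_1 \, \Phi_q(b_1)}.
\end{equation*}

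To turn this into an explicit bound depending only on $(d,\beta,k)$, I will estimate $\Phi_p(b)$ and $\Phi_q(b_1)$ in the worst case subject to $\lambda_i \in [0,1]$ and $\sum_{i=1}^d \lambda_i = d\beta$. The worst case is the extremal distribution placing $d\beta$ roots at $1$ and $d(1-\beta)$ roots at $0$, for which $\Phi_p(b) = d(b-1+\beta)/(b(b-1))$. Since $\partial_x^k$ preserves the average of the roots, the polynomial $q$ also has root average $\beta$, so an analogous worst-case estimate applies for $\Phi_q(b_1)$. Substituting into the two-stage bound yields a one-parameter family of upper bounds in $b > 1$; minimizing over $b$ reduces to a quadratic equation whose positive root, after simplification, gives exactly $((1-\beta)(1 - k/d) + 2\sqrt{(1-k/d)(k/d)\beta})^2$. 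The hypothesis $k > \beta d/(\beta+1)$ is precisely the condition needed for the optimizer $b^\ast$ to lie in the admissible region $b^\ast > 1$, so that the first-stage barrier is defined.

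The main obstacle will be the careful bookkeeping between the two derivative phases, in particular ensuring that $b_1 > \mathrm{maxroot}\ q$ and that $\Phi_q$ at $b_1$ is controlled tightly enough by the worst-case distributional argument rather than a looser bound (e.g.\ the naive $\Phi_q \leq \Phi_p$, which produces a strictly suboptimal constant). A secondary difficulty is the explicit quadratic optimization in the final step, which is lengthy but routine; the factor $2\sqrt{(1-k/d)(k/d)\beta}$ in the bound arises from the cross-term in the square that emerges upon matching the two derivative shifts, reflecting the quadratic structure of $(\partial_x x \partial_x)^k = \partial_x^k x^k \partial_x^k$.
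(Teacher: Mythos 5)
Your decomposition $(\partial_x\cdot x\cdot\partial_x)^k = \partial_x^k\cdot x^k\cdot\partial_x^k$ is correct (the paper uses it too, in the proof of Proposition~\ref{P-exp-I}), and the idea of running the barrier method separately through each factor is a legitimate alternative to the paper's route, which instead iterates \cite[Lemma 4.18]{inter5} (quoted as Lemma~\ref{MSS-DxD}): a single-step estimate for the full operator $\partial_x\cdot x\cdot\partial_x$ phrased in terms of the symmetrized polynomial $p(x^2)$ and then combined with Lemma~\ref{ravi1:lemma}. However, there is a real gap in how you pass from the first $\partial_x^k$ stage to the second.

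Two related problems. First, the shift lemma you invoke records only the root bound $\mathrm{maxroot}\,\partial_x^k g\le b-k/\phi$, but in order to chain the three stages you also need the barrier to be carried along, i.e.\ $\Phi_{\partial_x^k g}(b-k/\phi)\le\phi$; without that, nothing tells you what $\Phi_q(b_1)$ is at the shifted location. Second, and more damaging, you then discard even the weaker information from Stage~1 and replace it by a fresh ``worst-case'' estimate for $\Phi_q(b_1)$ using only the facts that $q$ has degree $d-k$, roots in $[0,1]$, and root average $\beta$. That bound, $\Phi_q(b_1)\le(d-k)(b_1-1+\beta)/(b_1(b_1-1))$, is strictly weaker than the chained barrier bound $\Phi_q(b_1)\le\Phi_p(b)$, and plugging it into your formula gives
\[
\inf_{b_1>1}\frac{(d-k)\,b_1(b_1-1+\beta)}{d(b_1-1)+(d-k)\beta},
\]
which does \emph{not} equal the claimed bound. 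For instance with $d=10$, $k=5$, $\beta=1/2$, the target is $\bigl((1-\beta)(1-\tfrac{k}{d})+2\sqrt{(1-\tfrac{k}{d})\tfrac{k}{d}\beta}\bigr)^2\approx 0.916$, whereas your optimization over $b_1$ yields $\approx 0.933$; the minimum is attained near $b_1\approx 1.18$ and the expression is minimized at a larger value than the target, so your route as written fails to establish the lemma. The fix is to keep the barrier shift $\Phi_q\bigl(b-k/\Phi_p(b)\bigr)\le\Phi_p(b)$ as the input to Stage~3 (for which you need the stronger form of the shift lemma, controlling both $\mathrm{maxroot}$ and $\Phi$ under $\partial_x$ — this is the upper-barrier analogue of the derivative shift in \cite{inter3}); once you do, your three-stage bound reduces to $\frac{(u\phi-k)^2}{u\phi^2}$ with $\phi=\Phi_p(u)$, which is algebraically identical to the paper's $\bigl(b-2k/\Phi_{p(x^2)}(b)\bigr)^2$ under $u=b^2$, and the remaining worst-case estimate (via Lemma~\ref{ravi1:lemma} applied to $\{\pm\sqrt{\lambda_i}\}$, not to $\{\lambda_i\}$) then reproduces the stated constant.
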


To establish Lemma \ref{estimate-DxD}, we utilize the barrier function method.  We first introduce some definitions and lemmas.

\begin{definition}
%\label{definition-barrier}
Let $p(x)$ be a degree-$d$ real-rooted polynomial with roots $\lambda_1,\ldots,\lambda_d$. We define the barrier function $\Phi_{p}(x)$ as
\begin{equation*}
\Phi_{p}(x):=\frac{p'(x)}{p(x)}=\sum_{i=1}^d\frac{1}{x-\lambda_i}.	
\end{equation*}
For any positive real number $\alpha\geq 0$, we define
\begin{equation*}
\mathbb{U}_{\alpha} p(x):=p(x)-\alpha\cdot p'(x).	
\end{equation*}	
For $p\in \mathbb{P}^+(d)$ we define the operator
\begin{equation*}
\mathbb{S}(p(x)):=p(x^2).
\end{equation*}
\end{definition}

It follows from the definition that the largest root of a real-rooted polynomial $p$ is bounded above by the largest root of $\mathbb{U}_{\alpha}p$ for any $\alpha>0$.
The following lemma plays a key role in our proof of Lemma \ref{estimate-DxD}, which shows how the roots of a univariate polynomial $p\in \mathbb{P}^+(d)$ shrink under the Laguerre derivative operator.

\begin{lemma}\label{MSS-DxD}
{\rm {\cite[Lemma 4.18]{inter5}}} Let $\alpha\geq 0$, $d\geq 2$ and $p(x)\in \mathbb{P}^+(d)$. Then we have
\begin{equation*}
\mathrm{maxroot}\ \mathbb{U}_{\alpha}\mathbb{S}(\partial_x \cdot x\cdot \partial_x p(x))	\leq \mathrm{maxroot}\ \mathbb{U}_{\alpha}\mathbb{S}(p)-2\alpha,
\end{equation*}
where the equality holds only if $p(x)=x^d$.
\end{lemma}

To prove Lemma \ref{estimate-DxD}, we also need the following lemma.

\begin{lemma}\label{ravi1:lemma}
{\rm {\cite[Lemma 4.5]{ravi1}}} Let $\lambda_1,\ldots,\lambda_m$ be a collection of real numbers at most $1$. Then for any real number $b>1$ we have
\begin{equation*}
\sum_{i=1}^m \frac{1}{b-\lambda_i}\leq \frac{ms}{b-1}+\frac{m(1-s)}{b-t}	,
\end{equation*}
where
\begin{equation*}
\alpha:=\frac{1}{m}\sum_{i=1}^m\lambda_i,\quad \beta:=\frac{1}{m}\sum_{i=1}^m\lambda_i^2,\quad t:=\frac{\alpha-\beta}{1-\alpha}\quad \text{and}\quad  s:=\frac{\beta-\alpha^2}{1-2\alpha+\beta}.
\end{equation*}

\end{lemma}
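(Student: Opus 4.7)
The plan is to reduce the claimed inequality to the construction of a single quadratic polynomial $q(\lambda)$ that majorizes $f(\lambda) := 1/(b-\lambda)$ on the half-line $\lambda \leq 1$ and agrees with $f$ at the two distinguished points $\lambda = 1$ and $\lambda = t$. The opening observation is that $t$ and $s$ are precisely the pair for which the two-point distribution placing mass $s$ at $\lambda = 1$ and mass $1-s$ at $\lambda = t$ has first moment $\alpha$ and second moment $\beta$; a direct computation confirms $s + (1-s)t = \alpha$ and $s + (1-s)t^{2} = \beta$. Consequently, for any quadratic polynomial $q$ the average $\tfrac{1}{m}\sum_{i=1}^{m} q(\lambda_i)$, being a linear combination of $1$, $\alpha$, $\beta$, is automatically equal to $s\,q(1) + (1-s)\,q(t)$.

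I would then construct the majorant $q$ by demanding that $q - f$ vanish to second order at $\lambda = t$ and to first order at $\lambda = 1$, because a configuration concentrated on these two points should saturate the bound. The natural ansatz is
\[
q(\lambda) \;:=\; \frac{1 + C(1-\lambda)(t-\lambda)^{2}}{b-\lambda}, \qquad C \;:=\; \frac{1}{(b-1)(b-t)^{2}},
\]
with $C$ chosen so that the numerator vanishes at $\lambda = b$; the resulting $q$ is therefore a polynomial of degree two. A direct subtraction gives
\[
q(\lambda) - f(\lambda) \;=\; \frac{C\,(1-\lambda)(t-\lambda)^{2}}{b-\lambda},
\]
which is non-negative on $\lambda \leq 1$ since $C > 0$, $1-\lambda \geq 0$, $(t-\lambda)^{2} \geq 0$, and $b - \lambda > 0$ on that range. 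Summing over $i$ and invoking the moment-matching identity yields
\[
\sum_{i=1}^{m} f(\lambda_i) \;\leq\; \sum_{i=1}^{m} q(\lambda_i) \;=\; m\bigl[\,s\,q(1) + (1-s)\,q(t)\,\bigr] \;=\; m\left[\frac{s}{b-1} + \frac{1-s}{b-t}\right],
\]
after using $q(1) = f(1)$ and $q(t) = f(t)$, which is the desired bound.

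The step I expect to be the main technical hurdle is not the construction itself (which is forced by the tangency conditions), but rather verifying that $t \leq 1$ and that the denominator $1 - 2\alpha + \beta$ defining $s$ is strictly positive, so the two-point representation is legitimate and the formula for $q$ is well posed. Both follow from Cauchy--Schwarz: $\beta \geq \alpha^{2}$ gives $t = (\alpha-\beta)/(1-\alpha) \leq (\alpha - \alpha^{2})/(1-\alpha) = \alpha \leq 1$, and $1 - 2\alpha + \beta = (1-\alpha)^{2} + (\beta - \alpha^{2}) > 0$ whenever $\alpha < 1$. The degenerate case $\alpha = 1$ forces every $\lambda_i$ to equal $1$, and the inequality reduces by continuity to the trivial identity $m/(b-1) = m/(b-1)$.
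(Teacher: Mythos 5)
The paper does not prove this lemma; it is invoked as a citation to \cite[Lemma 4.5]{ravi1}, so there is no in-paper argument to compare your proof against. Your argument is correct and complete: the moment-matching identities $s + (1-s)t = \alpha$ and $s + (1-s)t^{2} = \beta$ hold by direct expansion (using $1-s = (1-\alpha)^{2}/(1-2\alpha+\beta)$); the choice $C = \tfrac{1}{(b-1)(b-t)^{2}}$ makes the numerator $1 + C(1-\lambda)(t-\lambda)^{2}$ vanish at $\lambda = b$, so $q$ really is a degree-two polynomial with $q(1) = f(1)$ and $q(t) = f(t)$; the sign of $q - f = C(1-\lambda)(t-\lambda)^{2}/(b-\lambda)$ on $\lambda \leq 1 < b$ is manifestly nonnegative; and averaging a quadratic against $\{\lambda_i\}$ depends only on the moments $1,\alpha,\beta$, which the two-point law reproduces. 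Your treatment of well-posedness (Cauchy--Schwarz gives $\beta \geq \alpha^{2}$, hence $t \leq \alpha \leq 1 < b$ and $1-2\alpha+\beta = (1-\alpha)^{2} + (\beta-\alpha^{2}) > 0$ unless $\alpha = 1$, a degenerate case you resolve separately) closes the remaining gaps. This tangent-quadratic majorant is a clean, self-contained route to the bound; it is in the same moment-method spirit one expects from Ravichandran's barrier-function argument, but you have made the extremal two-point structure explicit via an explicitly exhibited majorizing quadratic rather than an optimization over admissible distributions.
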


Now we can present a proof of Lemma \ref{estimate-DxD}.

\begin{proof}[Proof of Lemma \ref{estimate-DxD}]
Let $b>1$ be constants to be determined later, and let $\alpha>0$ be a real number such that $\Phi_{p(x^2)}(b)=\frac{1}{\alpha}$. Let $b_1:=b-2\alpha$ and $p_1(x):=(\partial_x\cdot  x\cdot  \partial_x)\ p(x)$. By Lemma \ref{MSS-DxD}, we obtain that
\begin{equation*}
\mathrm{maxroot}\  p_1(x)<b_1^2 \quad\text{and}\quad \Phi_{p_1(x^2)}(b_1)\leq \Phi_{p(x^2)}(b)= \frac{1}{\alpha}.	
\end{equation*}
Applying Lemma \ref{MSS-DxD} to $p_1(x)$, we obtain that
\begin{equation*}
\mathrm{maxroot}\  p_2(x)<b_2^2 \quad\text{and}\quad \Phi_{p_2(x^2)}(b_2)\leq \Phi_{p_1(x^2)}(b_1)\leq \frac{1}{\alpha},
\end{equation*}
where $b_2:=b_1-2\alpha=b-4\alpha$ and $p_2(x):=(\partial_x\cdot  x\cdot  \partial_x)\ p_1(x)=(\partial_x\cdot  x\cdot  \partial_x)^2\ p(x)$. Repeating this argument for $k-2$ times, we obtain
\begin{equation*}
\mathrm{maxroot}\  (\partial_x\cdot  x\cdot  \partial_x)^k\ p(x)<(b-2k\cdot \alpha)^2,
\end{equation*}
implying that
\begin{equation*}
\mathrm{maxroot}\  (\partial_x\cdot  x\cdot  \partial_x)^k\ p(x)< \Big(\inf_{b>1}\ b-\frac{2k}{\Phi_{p(x^2)}(b)}\Big)^2.
\end{equation*}
By Lemma \ref{ravi1:lemma}, for each $b>1$ we have
\begin{equation*}
\Phi_{p(x^2)}(b)=\sum_{i=1}^{d}\Big(\frac{1}{b-\sqrt{\lambda_i}}+\frac{1}{b+\sqrt{\lambda_i}}\Big)\leq \frac{2d}{b-1}\cdot \frac{\beta}{\beta+1}+\frac{2d}{b+\beta}\cdot\frac{1}{\beta+1}=\frac{2d\cdot (b+\beta-1)}{(b-1)(b+\beta)}.
\end{equation*}
Therefore, we have
\begin{equation}\label{eq1:lemma DxD}
\mathrm{maxroot}\  (\partial_x\cdot  x\cdot  \partial_x)^k\ p(x)< \Big(\inf_{b>1}\ b-\frac{2k}{\Phi_{p(x^2)}(b)}\Big)^2 \leq \Big(\inf_{b>1}\ f(b)\Big)^2,
\end{equation}
where
\begin{equation*}
f(b):=b-\frac{2k\cdot (b-1)(b+\beta)}{2d\cdot (b+\beta-1)}=\Big(1-\beta\Big)\Big(1-\frac{k}{d}\Big)+\Big(1-\frac{k}{d}\Big)\Big(b+\beta-1\Big)+\frac{\frac{k}{d}\cdot \beta}{b+\beta-1}.
\end{equation*}
Note that
\begin{equation*}
f(b)\geq \Big(1-\beta\Big)\Big(1-\frac{k}{d}\Big)+2\sqrt{\Big(1-\frac{k}{d}\Big)\cdot \frac{k}{d}\cdot \beta},
\end{equation*}
where the equality holds if and only if  $b=b_*:=\sqrt{\frac{k\cdot \beta}{d-k}}+1-\beta$. This implies that if $b_*>1$, i.e., $k>\frac{\beta}{\beta+1}\cdot d$, then
\begin{equation*}
\Big(\inf_{b>1}\ f(b)\Big)^2= \Bigg( \Big(1-\beta\Big)\Big(1-\frac{k}{d}\Big)+2\sqrt{\Big(1-\frac{k}{d}\Big)\cdot \frac{k}{d}\cdot \beta}\Bigg)^2.
\end{equation*}
Combining with \eqref{eq1:lemma DxD}, we arrive at our conclusion.
\end{proof}

\subsection{Proof of Theorem \ref{main thm-submatrix}}

By combining Theorem \ref{mth1-spr} and Lemma \ref{estimate-DxD}, we can provide a proof of Theorem \ref{main thm-submatrix}.

\begin{proof}[Proof of Theorem \ref{main thm-submatrix}]
%Let $t:=d-k$. 
By Theorem \ref{mth1-spr}, there exists a  $(d-k)$-subset ${S}\subset[d]$ and a $(d-k)$-subset ${R}\subset[d]$ such that
\begin{equation}\label{proof thm-submatrix: eq1}
\Vert \mathbf{A}_{ {S}^C, {R}^C}\Vert_{2}^2
\leq \mathrm{maxroot}\ P_{d-k,d-k}(x;\mathbf{A},\mathbf{I}_d,\mathbf{I}_d)
\overset{(a)}=\mathrm{maxroot}\ (\partial_x\cdot x\cdot \partial_x)^{d-k}\  \det[x\cdot \mathbf{I}_{d}-\mathbf{A}^{\rm T}\mathbf{A}],
\end{equation}
where equation ($a$) follows from Proposition \ref{P-exp-I}. Lemma \ref{estimate-DxD} shows that if $d-k>\frac{\beta}{\beta+1}\cdot d$, i.e., $k<\frac{1}{\beta+1}\cdot d$, then
\begin{equation}\label{proof thm-submatrix: eq2}
\mathrm{maxroot}\ (\partial_x\cdot x\cdot \partial_x)^{d-k}\  \det[x\cdot \mathbf{I}_{d}-\mathbf{A}^{\rm T}\mathbf{A}]
\leq 	\Bigg( \Big(1-\beta\Big)\cdot \frac{k}{d}+2\sqrt{\Big(1-\frac{k}{d}\Big)\cdot \frac{k}{d}\cdot \beta}\Bigg)^2.
\end{equation}
Combining \eqref{proof thm-submatrix: eq1} with \eqref{proof thm-submatrix: eq2}, we obtain
\begin{equation*}
\Vert \mathbf{A}_{ {S}^C, {R}^C}\Vert_{2}\leq 	\Big(1-\beta\Big)\cdot \frac{k}{d}+2\sqrt{\Big(1-\frac{k}{d}\Big)\cdot \frac{k}{d}\cdot \beta}.
\end{equation*}
Taking $\widehat{S}=[d]\backslash  {S}$ and 	$\widehat{R}=[d]\backslash  {R}$, we arrive at our conclusion.
\end{proof}

%%%%%%%%%%%%%%%%%%%%%%%%%%%%%%%%%%%%%%%%%%%%%%%%%%%%%%%%%%%%%%%%%%%%%%%%%%%%
%%%%%%%%%%%%%%%%%%%%%%%%%%%%%%%%%%%%%%%%%%%%%%%%%%%%%%%%%%%%%%%%%%%%%%%%%%%%
%%%%%%%%%%%%%%%%%%%%%%%%%%%%%%%%%%%%%%%%%%%%%%%%%%%%%%%%%%%%%%%%%%%%%%%%%%%%
%%%%%%%%%%%%%%%%%%%%%%%%%%%%%%%%%%%%%%%%%%%%%%%%%%%%%%%%%%%%%%%%%%%%%%%%%%%%
%%%%%%%%%%%%%%%%%%%%%%%%%%%%%%%%%%%%%%%%%%%%%%%%%%%%%%%%%%%%%%%%%%%%%%%%%%%%
%%%%%%%%%%%%%%%%%%%%%%%%%%%%%%%%%%%%%%%%%%%%%%%%%%%%%%%%%%%%%%%%%%%%%%%%%%%%
%%%%%%%%%%%%%%%%%%%%%%%%%%%%%%%%%%%%%%%%%%%%%%%%%%%%%%%%%%%%%%%%%%%%%%%%%%%%
%%%%%%%%%%%%%%%%%%%%%%%%%%%%%%%%%%%%%%%%%%%%%%%%%%%%%%%%%%%%%%%%%%%%%%%%%%%%
%%%%%%%%%%%%%%%%%%%%%%%%%%%%%%%%%%%%%%%%%%%%%%%%%%%%%%%%%%%%%%%%%%%%%%%%%%%%
%%%%%%%%%%%%%%%%%%%%%%%%%%%%%%%%%%%%%%%%%%%%%%%%%%%%%%%%%%%%%%%%%%%%%%%%%%%%
%%%%%%%%%%%%%%%%%%%%%%%%%%%%%%%%%%%%%%%%%%%%%%%%%%%%%%%%%%%%%%%%%%%%%%%%%%%%
%%%%%%%%%%%%%%%%%%%%%%%%%%%%%%%%%%%%%%%%%%%%%%%%%%%%%%%%%%%%%%%%%%%%%%%%%%%%

%\clearpage
%\section*{Acknowledgements}
%
%{The work of Jian-Feng Cai was supported in part by the Hong Kong Research Grants Council GRF under Grants 16307023 and 16306124, and in part by Hong Kong Innovation and Technology Fund MHP/009/20. The work of Zhiqiang Xu is supported by the National Science Fund for Distinguished Young Scholars (12025108) and the National Nature Science Foundation of China (12471361, 12021001, 12288201).}

%\newpage
\appendix

\section{Appendix}

\subsection{Proof of Proposition \ref{P-exp-base}}\label{proof of P-exp-base}

We begin by proving several lemmas concerning the determinant of block matrices. To facilitate our discussion, we introduce a helpful notation.
For a multivariate polynomial $f(z_1,\ldots,z_n)\in\mathbb{R}[z_1,\ldots,z_n]$ of degree $r_i$ in $z_i$, $i=1,\ldots,n$, we use $\mathrm{Coeff}(z_1^{k_1}\cdots z_n^{k_n},f)$ to denote the coefficient of $z_1^{k_1}\cdots z_n^{k_n}$ in $f(z_1,\ldots,z_n)$, where $0\leq k_i\leq r_i$ for each $i\in[n]$.

\begin{lemma}\label{lemmas: P-exp-base}
\begin{enumerate}

\item[{\rm (i)}] Let $d,n$ be two positive integers, and let $\mathbf{A},\mathbf{B}\in\mathbb{R}^{n\times d}$. Then we have
\begin{equation}\label{lemmas: P-exp-base:eq1}
\det\left[\begin{matrix}
x\cdot  \mathbf{I}_{n} & \mathbf{A}  \\
\mathbf{B}^{\rm T}   & y\cdot \mathbf{I}_{d} \\
\end{matrix}\right]=x^{n-d}\cdot \det[xy\cdot \mathbf{I}_{d}-\mathbf{B}^{\rm T}\mathbf{A}]=y^{d-n}\cdot \det[xy\cdot \mathbf{I}_{n}-\mathbf{A}\mathbf{B}^{\rm T}].
\end{equation}
In particular, we have
\begin{equation}\label{lemmas: P-exp-base:eq2}
\det\left[\begin{matrix}
 \mathbf{0}_{} & \mathbf{A}  \\
\mathbf{A}^{\rm T}   & \mathbf{0}_{} \\
\end{matrix}\right]=\begin{cases}
(-1)^d\cdot \det[\mathbf{A}^{\rm T}]\cdot \det[\mathbf{A}] & \text{if $n=d$,} \\
0 & \text{if $n\neq d$.}
\end{cases}
\end{equation}	

\item[{\rm (ii)}]
Let $d,n,k,r$ be positive integers such that $d+k=r+n$. Let $\mathbf{A}\in\mathbb{R}^{n\times d}$, $\mathbf{B}\in\mathbb{R}^{n\times k}$, and $\mathbf{C}\in\mathbb{R}^{r\times d}$. If $d<r$, then we have $\det\left[\begin{matrix}
\mathbf{B} & \mathbf{A}  \\
\mathbf{0}   & \mathbf{C} \\
\end{matrix}\right]=0$.

\item[{\rm (iii)}]
Let $d,n,m$ be three positive integers, and let $\mathbf{A}\in\mathbb{R}^{n\times d}$ and $\mathbf{B}\in\mathbb{R}^{n\times m}$. Let $k\leq d$ and $r\leq m$ be two nonnegative integers. Then we have
\begin{equation}\label{lemmas: P-exp-base:subset}
\sum_{\substack{S\subset[d]\\|S|=k}}\sum_{\substack{R\subset[m]\\|R|=r}}\det\left[\begin{pmatrix}
(\mathbf{A}_{:,S})^{\rm T}  \\
(\mathbf{B}_{:,R})^{\rm T}
\end{pmatrix}
\begin{pmatrix}
\mathbf{A}_{:,S} & \mathbf{B}_{:,R}  \\
\end{pmatrix}\right]=\mathrm{Coeff}\left(x^{2k}y^{2r},\det\left[\begin{pmatrix}
\mathbf{I}_{d} & \mathbf{0}   \\
\mathbf{0}  & \mathbf{I}_{m}
\end{pmatrix}+
\begin{pmatrix}
x\cdot\mathbf{A}^{\rm T}  \\
y\cdot\mathbf{B}^{\rm T}
\end{pmatrix}
\begin{pmatrix}
x\cdot  \mathbf{A}_{} & y\cdot\mathbf{B}  \\
\end{pmatrix}\right]\right).	
\end{equation}

\item[{\rm (iv)}] Let $d>0,k\geq 0,i\geq 0,r\geq 0$ be integers such that $i\leq d$ and $r\leq d$. Let $\mathbf{A}\in\mathbb{R}^{(k+i)\times d}$, $\mathbf{B}\in\mathbb{R}^{(k+i)\times k}$ and $\mathbf{C}\in\mathbb{R}^{r\times d}$. Set $\mathbf{Q}:=\mathbf{I}_d-\mathbf{C}^{\dagger}\mathbf{C}$. If $0\leq i\leq d-r$ then we have
\begingroup\fontsize{10pt}{12pt}\selectfont
\begin{equation}\label{lemmas: P-exp-base:eq3}
\det[\mathbf{C}\mathbf{C}^{\rm T}]\cdot \sum_{\substack{W\subset[d]\\|W|=i}}
\det\left[\begin{matrix}
\mathbf{B}_{}^{\rm T}   \\
(\mathbf{A}\cdot \mathbf{Q}_{:,W} )^{\rm T}
\end{matrix}\right]
\det\left[\begin{matrix}
 \mathbf{B}_{} & \mathbf{A}\cdot \mathbf{Q}_{:,W}  \\
\end{matrix}\right]
=
\sum_{\substack{P\subset[d]\\|P|=r+i}}	
\det\left[\begin{matrix}
\mathbf{B}_{}^{\rm T}  &  \mathbf{0}_{r\times k}^{\rm T} \\
(\mathbf{A}_{:,P})^{\rm T}  & (\mathbf{C}_{:,P})^{\rm T}
\end{matrix}\right]
\det\left[\begin{matrix}
 \mathbf{B}_{} & \mathbf{A}_{:,P}  \\
\mathbf{0}_{r\times k}   & \mathbf{C}_{:,P}  \\
\end{matrix}\right].
\end{equation}
\endgroup
If $d-r<i\leq d$, then the left hand side of \eqref{lemmas: P-exp-base:eq3} is equal to zero.

\end{enumerate}

\end{lemma}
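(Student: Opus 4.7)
Parts (i)--(iii) admit standard arguments. For (i), the Schur complement formula $\det\bigl(\begin{smallmatrix}X & A\\ B & Y\end{smallmatrix}\bigr) = \det(X)\det(Y-BX^{-1}A)$ applied with $X = x\mathbf{I}_n$ gives $x^{n-d}\det(xy\mathbf{I}_d - \mathbf{B}^{\rm T}\mathbf{A})$ for $x \neq 0$, and symmetrically with $Y = y\mathbf{I}_d$ gives $y^{d-n}\det(xy\mathbf{I}_n - \mathbf{A}\mathbf{B}^{\rm T})$ for $y \neq 0$; as all three expressions are polynomial in $x,y$, the identities extend to $\mathbb{R}^2$, and substituting $\mathbf{B} = \mathbf{A}$ with $x=y=0$ yields the special case (picking up $(-1)^d$ when $n=d$ and vanishing otherwise). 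Part (ii) follows from a rank count: the bottom $r$ rows of the square matrix are $[\mathbf{0}_{r\times k}, \mathbf{C}]$, whose rank is at most $\mathrm{rank}(\mathbf{C}) \leq d < r$, so those rows are linearly dependent and the determinant vanishes. For (iii), I would expand $\det(\mathbf{I}_{d+m} + \mathbf{M}^{\rm T}\mathbf{M})$ with $\mathbf{M} = [x\mathbf{A}, y\mathbf{B}]$ via the principal-minor identity $\det(\mathbf{I}_N + \mathbf{M}^{\rm T}\mathbf{M}) = \sum_{T \subset [N]}\det(\mathbf{M}_{:,T}^{\rm T}\mathbf{M}_{:,T})$; decomposing each index set as $T = S \sqcup (d + R)$ shows the summand carries $x^{2|S|}y^{2|R|}$ times the Gram determinant $\det([\mathbf{A}_{:,S}, \mathbf{B}_{:,R}]^{\rm T}[\mathbf{A}_{:,S}, \mathbf{B}_{:,R}])$, so extracting the coefficient of $x^{2k}y^{2r}$ recovers the left-hand side.

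Part (iv) is the technical heart. First I would dispose of degenerate cases: when $\mathbf{B}$ is column-rank-deficient or $\mathrm{rank}(\mathbf{C}) < r$, both sides vanish by a rank argument on the blocks of the associated square matrices; when $d-r < i \leq d$ and $\mathrm{rank}(\mathbf{C}) = r$, one has $\mathrm{rank}(\mathbf{Q}) = d-r < i$, so every $\mathbf{A}\mathbf{Q}_{:,W}$ is rank-deficient and each $\det([\mathbf{B}, \mathbf{A}\mathbf{Q}_{:,W}])$ vanishes. In the generic range $i \leq d-r$ with $\mathbf{B}$ and $\mathbf{C}$ of full rank, I would apply Lemma~\ref{lemma2.2} on both sides. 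On the right, setting $\tilde{\mathbf{B}} = \binom{\mathbf{B}}{\mathbf{0}_{r\times k}}$ and $\tilde{\mathbf{A}}_P = \binom{\mathbf{A}_{:,P}}{\mathbf{C}_{:,P}}$, a direct block computation of $\mathbf{I} - \tilde{\mathbf{B}}\tilde{\mathbf{B}}^{\dagger}$ yields $\det(\mathbf{M}_P^{\rm T}\mathbf{M}_P) = \det(\mathbf{B}^{\rm T}\mathbf{B})\det(\mathbf{A}_{:,P}^{\rm T}\mathbf{Q}_B\mathbf{A}_{:,P} + \mathbf{C}_{:,P}^{\rm T}\mathbf{C}_{:,P})$, where $\mathbf{Q}_B := \mathbf{I}_{k+i} - \mathbf{B}\mathbf{B}^{\dagger}$ and $\mathbf{M}_P$ denotes the square block matrix in the right-hand side of \eqref{lemmas: P-exp-base:eq3}. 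Factoring $\mathbf{Q}_B = \mathbf{P}\mathbf{P}^{\rm T}$ with $\mathbf{P}$ having orthonormal columns and setting $\mathbf{L} := \binom{\mathbf{P}^{\rm T}\mathbf{A}}{\mathbf{C}}$, one obtains $\mathbf{A}^{\rm T}\mathbf{Q}_B\mathbf{A} + \mathbf{C}^{\rm T}\mathbf{C} = \mathbf{L}^{\rm T}\mathbf{L}$, so Cauchy--Binet gives $\sum_{|P|=r+i}\det((\mathbf{L}^{\rm T}\mathbf{L})_{P,P}) = \det(\mathbf{L}\mathbf{L}^{\rm T})$, and a block Schur complement on $\mathbf{L}\mathbf{L}^{\rm T}$ using $\mathbf{C}^{\rm T}(\mathbf{C}\mathbf{C}^{\rm T})^{-1}\mathbf{C} = \mathbf{I}_d - \mathbf{Q}$ reduces this to $\det(\mathbf{B}^{\rm T}\mathbf{B})\det(\mathbf{C}\mathbf{C}^{\rm T})\det(\mathbf{P}^{\rm T}\mathbf{A}\mathbf{Q}\mathbf{A}^{\rm T}\mathbf{P})$. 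The same Lemma~\ref{lemma2.2} reduction applied to $[\mathbf{B}, \mathbf{A}\mathbf{Q}_{:,W}]$ on the left gives $\det([\mathbf{B}, \mathbf{A}\mathbf{Q}_{:,W}])^2 = \det(\mathbf{B}^{\rm T}\mathbf{B})\det((\mathbf{P}^{\rm T}\mathbf{A}\mathbf{Q})_{:,W})^2$, after which Cauchy--Binet $\sum_{|W|=i}\det((\mathbf{P}^{\rm T}\mathbf{A}\mathbf{Q})_{:,W})^2 = \det(\mathbf{P}^{\rm T}\mathbf{A}\mathbf{Q}\mathbf{Q}^{\rm T}\mathbf{A}^{\rm T}\mathbf{P}) = \det(\mathbf{P}^{\rm T}\mathbf{A}\mathbf{Q}\mathbf{A}^{\rm T}\mathbf{P})$ (using $\mathbf{Q}^2 = \mathbf{Q}$) delivers the same closed form.

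The main obstacle lies in part (iv): the two sides of \eqref{lemmas: P-exp-base:eq3} involve structurally very different objects---principal $(r+i)$-minors of $\mathbf{A}^{\rm T}\mathbf{Q}_B\mathbf{A} + \mathbf{C}^{\rm T}\mathbf{C}$ on the right versus $i \times i$ compressions of $\mathbf{A}^{\rm T}\mathbf{Q}_B\mathbf{A}$ by the kernel projector $\mathbf{Q}$ of $\mathbf{C}$, scaled by $\det(\mathbf{C}\mathbf{C}^{\rm T})$, on the left. The bridging algebraic identity is $\mathbf{C}\mathbf{Q} = 0$, which is precisely what lets the block Schur complement on the right-hand Cauchy--Binet expansion absorb $\mathbf{C}\mathbf{C}^{\rm T}$ into the projector $\mathbf{Q}$ that also appears on the left; without this cancellation the two sides would not match. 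Packaging the several rank-degeneracy cases (where Lemma~\ref{lemma2.2} or the factorization $\mathbf{Q}_B = \mathbf{P}\mathbf{P}^{\rm T}$ cannot be applied directly) in a clean, unified fashion is the other place where care is required.
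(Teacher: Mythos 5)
Your proofs of parts (i)--(iii) follow the paper's approach: Schur complement for (i), the rank argument for (ii), and the principal-minor expansion of $\det(\mathbf{I}+\mathbf{M}^{\rm T}\mathbf{M})$ for (iii) (which is equivalent to the paper's use of \eqref{expand det} followed by coefficient extraction).

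Your proof of part (iv), however, takes a genuinely different and arguably cleaner route than the paper's. The paper works entirely at the level of coefficient extraction: it applies part (iii) to write the left-hand side as a coefficient of a large block determinant in auxiliary variables, then chains together the Weinstein-Aronszajn identity, the subset expansion \eqref{expand det}, and Lemma~\ref{lemma2.2} through a sequence of re-interpretations of which coefficient is being extracted, before collapsing back to the right-hand side. You instead evaluate each side of \eqref{lemmas: P-exp-base:eq3} independently to the common closed form $\det(\mathbf{B}^{\rm T}\mathbf{B})\det(\mathbf{C}\mathbf{C}^{\rm T})\det(\mathbf{P}^{\rm T}\mathbf{A}\mathbf{Q}\mathbf{A}^{\rm T}\mathbf{P})$, using Lemma~\ref{lemma2.2} to split off $\det(\mathbf{B}^{\rm T}\mathbf{B})$, the rank-$i$ factorization $\mathbf{Q}_B=\mathbf{P}\mathbf{P}^{\rm T}$ to turn the Gram matrices into the form needed for Cauchy--Binet, and a Schur complement on $\mathbf{L}\mathbf{L}^{\rm T}$ driven by the identity $\mathbf{C}^{\rm T}(\mathbf{C}\mathbf{C}^{\rm T})^{-1}\mathbf{C}=\mathbf{I}_d-\mathbf{Q}$. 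I checked the key steps: $\mathbf{I}-\tilde{\mathbf{B}}\tilde{\mathbf{B}}^{\dagger}=\mathrm{diag}(\mathbf{Q}_B,\mathbf{I}_r)$ is correct, $\mathbf{L}^{\rm T}\mathbf{L}=\mathbf{A}^{\rm T}\mathbf{Q}_B\mathbf{A}+\mathbf{C}^{\rm T}\mathbf{C}$ holds, $\mathbf{L}$ has $r+i\le d$ rows so Cauchy--Binet applies, and the degenerate cases ($\mathrm{rank}(\mathbf{B})<k$, $\mathrm{rank}(\mathbf{C})<r$, or $i>d-r$) are handled cleanly by rank arguments that make both sides (or the relevant side) vanish. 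The trade-off: the paper's coefficient-extraction argument stays uniform with the surrounding machinery in the proof of Proposition~\ref{P-exp-base}, whereas your route yields a self-contained, more transparent verification of (iv) as a standalone identity with a conceptual explanation (the cancellation $\mathbf{C}\mathbf{Q}=\mathbf{0}$) for why the seemingly unrelated objects on the two sides agree.
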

\begin{proof}
(i) Equation \eqref{lemmas: P-exp-base:eq1} follows from Schur's formula \cite{CH69,Zhang06}. Substituting $y=x=0$ and $\mathbf{B}=\mathbf{A}$ into \eqref{lemmas: P-exp-base:eq1}, we obtain \eqref{lemmas: P-exp-base:eq2}.

(ii) When $d<r$, we can observe that $\mathrm{rank}([\mathbf{0}_{r\times k}\ \  \mathbf{C}])=\mathrm{rank}(\mathbf{C})\leq d<r$.
As a consequence,
 the rows of $[\mathbf{0}_{r\times k}\ \  \mathbf{C}]$ are linear dependent, which implies the desired result.

(iii) By \eqref{expand det}, we can expand
\begin{equation*}
\begin{aligned}
\det\left[\begin{pmatrix}
\mathbf{I}_{d} & \mathbf{0}   \\
  \mathbf{0} & \mathbf{I}_{m}
\end{pmatrix}+
\begin{pmatrix}
x\cdot\mathbf{A}^{\rm T}  \\
y\cdot\mathbf{B}^{\rm T}
\end{pmatrix}
\begin{pmatrix}
x\cdot  \mathbf{A}_{} & y\cdot\mathbf{B}  \\
\end{pmatrix}\right]&=\sum_{S\subset[d]}\sum_{R\subset[m]}\det\left[\begin{pmatrix}
x\cdot(\mathbf{A}_{:,S})^{\rm T}  \\
y\cdot(\mathbf{B}_{:,R})^{\rm T}
\end{pmatrix}
\begin{pmatrix}
x\cdot\mathbf{A}_{:,S} & y\cdot\mathbf{B}_{:,R}  \\
\end{pmatrix}\right]\\
&=\sum_{i=0}^d\sum_{j=0}^mx^{2i}y^{2j} \sum_{\substack{S\subset[d]\\|S|=i}}\sum_{\substack{R\subset[m]\\|R|=j}}\det\left[\begin{pmatrix}
(\mathbf{A}_{:,S})^{\rm T}  \\
(\mathbf{B}_{:,R})^{\rm T}
\end{pmatrix}
\begin{pmatrix}
\mathbf{A}_{:,S} & \mathbf{B}_{:,R}  \\
\end{pmatrix}\right].
\end{aligned}
\end{equation*}
By comparing the coefficients of $x^{2k}y^{2r}$ on both sides of the aforementioned equation, we can deduce \eqref{lemmas: P-exp-base:subset}.

(iv) The case $r=0$ is  straightforward. In the case $k=0$ and $r>0$, \eqref{lemmas: P-exp-base:eq3} reduces to
\begin{equation}\label{lemmas: P-exp-base:eq333}
\det[\mathbf{C}\mathbf{C}^{\rm T}]\cdot \sum_{\substack{W\subset[d]\\|W|=i}}
\det[(\mathbf{A}\cdot \mathbf{Q}_{:,W} )^{\rm T}
]\det[\mathbf{A}\cdot \mathbf{Q}_{:,W}]
=
\sum_{\substack{P\subset[d]\\|P|=r+i}}	
\det\left[\begin{matrix}
(\mathbf{A}_{:,P})^{\rm T}  & (\mathbf{C}_{:,P})^{\rm T}
\end{matrix}\right]
\det\left[\begin{matrix}
 \mathbf{A}_{:,P}  \\
 \mathbf{C}_{:,P}  \\
\end{matrix}\right].
\end{equation}
Applying the Cauchy-Binet formula to both sides of the above equation, \eqref{lemmas: P-exp-base:eq333} becomes
\begin{equation*}
\det[\mathbf{C}\mathbf{C}^{\rm T}]\cdot \det[\mathbf{A} \mathbf{Q}\mathbf{A}^{\rm T}]
=\det\left[\left(\begin{matrix}
 \mathbf{A}  \\
 \mathbf{C}  \\
\end{matrix}\right)\left(\begin{matrix}
\mathbf{A}^{\rm T}  & \mathbf{C}^{\rm T}
\end{matrix}\right)\right].
\end{equation*}
This result can be directly deduced from Lemma \ref{lemma2.2}.
 Therefore, it remains to consider the case where both $k$ and $r$ are greater than zero.
For the remainder of our analysis, we will assume that $k>0$ and $r>0$.
Using the result of (iii), we can rewrite the left-hand side of \eqref{lemmas: P-exp-base:eq3} as
\begingroup\fontsize{10pt}{12pt}\selectfont
\begin{equation*}
\begin{aligned}
\text{LHS of \eqref{lemmas: P-exp-base:eq3}}&=\det[\mathbf{C}\mathbf{C}^{\rm T}]\cdot \mathrm{Coeff}\left(x^{2k}y^{2i},\det\left[\begin{pmatrix}
\mathbf{I}_{k} &  \mathbf{0} \\
  \mathbf{0} & \mathbf{I}_{d}
\end{pmatrix}+
\begin{pmatrix}
x\cdot\mathbf{B}_{}^{\rm T}  \\
y\cdot(\mathbf{A}\cdot \mathbf{Q}_{})^{\rm T}
\end{pmatrix}
\begin{pmatrix}
x\cdot  \mathbf{B}_{} & y\cdot  \mathbf{A}\cdot \mathbf{Q}_{}  \\
\end{pmatrix}\right]\right)  \\
&\overset{(a)}=\det[\mathbf{C}\mathbf{C}^{\rm T}]\cdot \mathrm{Coeff}\left(x^{2k}y^{2i},\det\left[\mathbf{I}_{k+i}+
\begin{pmatrix}
x\cdot  \mathbf{B}_{} & y\cdot  \mathbf{A}   \\
\end{pmatrix}\begin{pmatrix}
\mathbf{I}_{k} &\mathbf{0}  \\
 \mathbf{0} &   \mathbf{Q}_{}
\end{pmatrix}
\begin{pmatrix}
x\cdot\mathbf{B}_{}^{\rm T}  \\
y\cdot\mathbf{A}^{\rm T}
\end{pmatrix}\right]\right) \\
&\overset{(b)}=\det[\mathbf{C}\mathbf{C}^{\rm T}]\cdot \mathrm{Coeff}\bigg(x^{2k}y^{2i},\sum_{S\subset[k+i]}\det\left[
\begin{pmatrix}
x\cdot  \mathbf{B}_{S,:} & y\cdot  \mathbf{A}_{S,:}   \\
\end{pmatrix}
\begin{pmatrix}
\mathbf{I}_{k} & \mathbf{0} \\
 \mathbf{0} &   \mathbf{Q}_{}
\end{pmatrix}
\begin{pmatrix}
x\cdot(\mathbf{B}_{}^{\rm T})_{:,S}  \\
y\cdot(\mathbf{A}^{\rm T} )_{:,S}
\end{pmatrix}\right]\bigg) \\
&\overset{(c)}=\mathrm{Coeff}\bigg(x^{2k}y^{2i},\sum_{S\subset[k+i]}\det\left[
\begin{pmatrix}
x\cdot  \mathbf{B}_{S,:} & y\cdot  \mathbf{A}_{S,:}   \\
\mathbf{0} & \mathbf{C}
\end{pmatrix}
\begin{pmatrix}
x\cdot(\mathbf{B}_{}^{\rm T})_{:,S}& \mathbf{0}  \\
y\cdot(\mathbf{A}^{\rm T})_{:,S}   & \mathbf{C}^{\rm T}_{}
\end{pmatrix}\right]\bigg) \\
&=\mathrm{Coeff}\bigg(x^{2k}y^{2r+2i}z^{2r},y^{2r}z^{2r}\sum_{S\subset[k+i]}\det\left[
\begin{pmatrix}
x\cdot  \mathbf{B}_{S,:} & y\cdot  \mathbf{A}_{S,:}   \\
\mathbf{0} & \mathbf{C}
\end{pmatrix}
\begin{pmatrix}
x\cdot(\mathbf{B}_{}^{\rm T})_{:,S}& \mathbf{0}  \\
y\cdot(\mathbf{A}^{\rm T})_{:,S}   & \mathbf{C}^{\rm T}_{}
\end{pmatrix}\right]\bigg) \\
&\overset{(d)}=\mathrm{Coeff}\bigg(x^{2k}y^{2r+2i}z^{2r},\sum_{R\subset[r]}y^{2|R|}z^{2|R|}\sum_{S\subset[k+i]}\det\left[
\begin{pmatrix}
x\cdot  \mathbf{B}_{S,:} & y\cdot  \mathbf{A}_{S,:}   \\
\mathbf{0} & \mathbf{C}_{R,:}
\end{pmatrix}
\begin{pmatrix}
x\cdot(\mathbf{B}_{}^{\rm T})_{:,S}& \mathbf{0}  \\
y\cdot(\mathbf{A}^{\rm T})_{:,S}   & (\mathbf{C}^{\rm T})_{:,R}
\end{pmatrix}\right]\bigg) \\
&\overset{(e)}=\mathrm{Coeff}\bigg(x^{2k}y^{2r+2i}z^{2r},\det\left[\begin{pmatrix}
\mathbf{I}_{k+i} & \mathbf{0}   \\
\mathbf{0}  & \mathbf{I}_{r}
\end{pmatrix}+
\begin{pmatrix}
x\cdot  \mathbf{B} & y\cdot  \mathbf{A}  \\
\mathbf{0} & yz\cdot \mathbf{C}
\end{pmatrix}
\begin{pmatrix}
x\cdot\mathbf{B}_{}^{\rm T}& \mathbf{0}  \\
y\cdot\mathbf{A}^{\rm T}   & yz\cdot \mathbf{C}^{\rm T}_{}
\end{pmatrix}\right]\bigg) \\
&\overset{(f)}=\mathrm{Coeff}\bigg(x^{2k}y^{2r+2i}z^{2r},\det\left[\begin{pmatrix}
\mathbf{I}_{k} &  \mathbf{0} \\
\mathbf{0}  & \mathbf{I}_{d}
\end{pmatrix}+\begin{pmatrix}
x\cdot\mathbf{B}_{}^{\rm T}& \mathbf{0}  \\
y\cdot\mathbf{A}^{\rm T}   & yz\cdot \mathbf{C}^{\rm T}_{}
\end{pmatrix}
\begin{pmatrix}
x\cdot  \mathbf{B} & y\cdot  \mathbf{A}  \\
\mathbf{0} & yz\cdot \mathbf{C}
\end{pmatrix}
\right]\bigg). \\
\end{aligned}	
\end{equation*}
\endgroup
Here, ``LHS of \eqref{lemmas: P-exp-base:eq3}" refers to the left-hand side term of \eqref{lemmas: P-exp-base:eq3}. 
Equations ($a$) and ($f$) follow from the Weinstein-Aronszajn identity \eqref{Weinstein-Aronszajn}. Equations ($b$) and ($e$) can be derived from \eqref{expand det}. Equation ($c$) can be obtained by applying Lemma \ref{lemma2.2}, and equation ($d$) follows from the the fact that the polynomial 
\begin{equation*}
y^{2|R|}z^{2|R|}\sum_{S\subset[k+i]}\det\left[
\begin{pmatrix}
x\cdot  \mathbf{B}_{S,:} & y\cdot  \mathbf{A}_{S,:}   \\
\mathbf{0} & \mathbf{C}_{R,:}
\end{pmatrix}
\begin{pmatrix}
x\cdot(\mathbf{B}_{}^{\rm T})_{:,S}& \mathbf{0}  \\
y\cdot(\mathbf{A}^{\rm T})_{:,S}   & (\mathbf{C}^{\rm T})_{:,R}
\end{pmatrix}\right]	
\end{equation*}
contains the term $x^{2k}y^{2r+2i}z^{2r}$ only if $|R|=r$, i.e., $R=[r]$.
Then, we utilize  \eqref{expand det} to obtain that
\begin{equation}\label{eq:LHScoeff}
\begin{aligned}
\text{LHS of \eqref{lemmas: P-exp-base:eq3}}
&=\mathrm{Coeff}\bigg(x^{2k}y^{2r+2i}z^{2r}, \sum_{T\subset[k]}x^{2|T|}\sum_{P\subset[d]}y^{2|P|}\det\left[\begin{pmatrix}
(\mathbf{B}_{:,T})^{\rm T}& \mathbf{0}  \\
(\mathbf{A}_{:,P})^{\rm T}   & z\cdot (\mathbf{C}_{:,P})^{\rm T}_{}
\end{pmatrix}
\begin{pmatrix}
 \mathbf{B}_{:,T} &  \mathbf{A}_{:,P}  \\
\mathbf{0} & z\cdot \mathbf{C}_{:,P}
\end{pmatrix}
\right]\bigg)\\
&=\mathrm{Coeff}\bigg(y^{2r+2i}z^{2r}, \sum_{P\subset[d]}y^{2|P|}\det\left[\begin{pmatrix}
\mathbf{B}^{\rm T}& \mathbf{0}  \\
(\mathbf{A}_{:,P})^{\rm T}   & z\cdot (\mathbf{C}_{:,P})^{\rm T}_{}
\end{pmatrix}
\begin{pmatrix}
 \mathbf{B} &  \mathbf{A}_{:,P}  \\
\mathbf{0} & z\cdot \mathbf{C}_{:,P}
\end{pmatrix}
\right]\bigg).
\end{aligned}	
\end{equation}
From the above equation, we can observe that if $r+i>d$, then the left-hand side of \eqref{lemmas: P-exp-base:eq3} equals zero. Otherwise, we have
\begin{equation*}
\begin{aligned}
\text{LHS of \eqref{lemmas: P-exp-base:eq3}}
&\overset{(a)}=\mathrm{Coeff}\bigg(z^{2r},\sum_{P\subset[d],|P|=r+i}\det\left[\begin{pmatrix}
\mathbf{B}_{}^{\rm T}& \mathbf{0}  \\
(\mathbf{A}_{:,P})^{\rm T}   & z\cdot (\mathbf{C}_{:,P})^{\rm T}_{}
\end{pmatrix}
\begin{pmatrix}
 \mathbf{B} &  \mathbf{A}_{:,P}  \\
\mathbf{0} & z\cdot \mathbf{C}_{:,P}
\end{pmatrix}
\right]\bigg) \\
&=\mathrm{Coeff}\bigg(z^{2r},\sum_{P\subset[d],|P|=r+i}z^{2r}\det\left[\begin{matrix}
\mathbf{B}_{}^{\rm T}& \mathbf{0}  \\
(\mathbf{A}_{:,P})^{\rm T}   &  (\mathbf{C}_{:,P})^{\rm T}_{}
\end{matrix}
\right]
\det\left[
\begin{matrix}
 \mathbf{B} &  \mathbf{A}_{:,P}  \\
\mathbf{0} &  \mathbf{C}_{:,P}
\end{matrix}
\right]\bigg) \\
&=\sum_{P\subset[d],|P|=r+i}\det\left[\begin{matrix}
\mathbf{B}_{}^{\rm T}& \mathbf{0}  \\
(\mathbf{A}_{:,P})^{\rm T}   &  (\mathbf{C}_{:,P})^{\rm T}_{}
\end{matrix}
\right]
\det\left[
\begin{matrix}
 \mathbf{B} &  \mathbf{A}_{:,P}  \\
\mathbf{0} & \mathbf{C}_{:,P}
\end{matrix}
\right]=\text{RHS of \eqref{lemmas: P-exp-base:eq3}}.
\end{aligned}	
\end{equation*}
Here, equation ($a$) follows from (\ref{eq:LHScoeff}) and
``RHS of \eqref{lemmas: P-exp-base:eq3}" refers to the right-hand side term of \eqref{lemmas: P-exp-base:eq3}.
Therefore, we arrive at our conclusion.

\end{proof}

Now we can prove Proposition \ref{P-exp-base}.
\begin{proof}[Proof of Proposition \ref{P-exp-base}]
Recall that $k\leq \mathrm{rank}(\mathbf{B})\leq \min\{n,d_B\}$, $r\leq \mathrm{rank}(\mathbf{C})\leq \min\{n_C,d\}$, and
\begin{equation*}
H(x,w,\mathbf{Y},\mathbf{Z};\mathbf{A},\mathbf{B},\mathbf{C})=\det\left[\begin{matrix}
w\cdot \mathbf{I}_n  & \mathbf{0} & \mathbf{B} & \mathbf{A}  \\
\mathbf{0}  & \mathbf{Z} & \mathbf{0}  & \mathbf{C}  \\
\mathbf{B}^{\rm T}  &  \mathbf{0} & \mathbf{Y} & \mathbf{0}  \\
\mathbf{A}^{\rm T}  & \mathbf{C}^{\rm T} & \mathbf{0}  & x\cdot  \mathbf{I}_d
\end{matrix}\right].
\end{equation*}
By \eqref{expand det} we can expand $H(x,w,\mathbf{Y},\mathbf{Z};\mathbf{A},\mathbf{B},\mathbf{C})$ as
\begingroup\fontsize{10pt}{12pt}\selectfont
\begin{equation*}
H(x,w,\mathbf{Y},\mathbf{Z};\mathbf{A},\mathbf{B},\mathbf{C})
=\sum_{M\subset[n_C]}\sum_{N\subset[d_B]}\sum_{L\subset[n]}\sum_{P\subset[d]} \det\left[\begin{matrix}
\mathbf{0}  & \mathbf{0} & \mathbf{B}_{L,N} & \mathbf{A}_{L,P}  \\
\mathbf{0}  & \mathbf{0} & \mathbf{0}  & \mathbf{C}_{M,P}  \\
(\mathbf{B}_{L,N})^{\rm T}  & \mathbf{0}  & \mathbf{0} & \mathbf{0}  \\
(\mathbf{A}_{L,P})^{\rm T}  & (\mathbf{C}_{M,P})^{\rm T} & \mathbf{0}  & \mathbf{0}
\end{matrix}\right]\cdot x^{d-|P|}\cdot w^{n-|L|}\cdot \mathbf{Y}^{N^C}\cdot \mathbf{Z}^{M^C}.
\end{equation*}
\endgroup
Then we have
\begingroup\fontsize{10pt}{12pt}\selectfont
\begin{equation*}
\begin{aligned}
\text{RHS of \eqref{eq:P-exp-base}}
&=(-1)^{k+r}\cdot x^r\cdot w^{d+k-n}\cdot \sum_{L\subset[n]}\sum_{P\subset[d]} \det\left[\begin{matrix}
\mathbf{0}  & \mathbf{0} & \mathbf{B}_{L,S} & \mathbf{A}_{L,P}  \\
\mathbf{0}  & \mathbf{0} & \mathbf{0}  & \mathbf{C}_{R,P}  \\
(\mathbf{B}_{L,S})^{\rm T}  & \mathbf{0}  & \mathbf{0} & \mathbf{0}  \\
(\mathbf{A}_{L,P})^{\rm T}  & (\mathbf{C}_{R,P})^{\rm T} &  \mathbf{0} &\mathbf{0}
\end{matrix}\right]\cdot x^{d-|P|}\cdot w^{n-|L|}\\
&\overset{(a)}=
\sum_{i=0}^{\min\{d-r,n-k\}}(-1)^{i}
\sum_{\substack{L\subset[n]\\|L|=k+i}}\sum_{\substack{P\subset[d]\\|P|=r+i}}
\det\left[\begin{matrix}
(\mathbf{B}_{L,S})^{\rm T}  &  \mathbf{0}_{k\times r} \\
(\mathbf{A}_{L,P})^{\rm T}  & (\mathbf{C}_{R,P})^{\rm T}
\end{matrix}\right]
\det\left[\begin{matrix}
 \mathbf{B}_{L,S} & \mathbf{A}_{L,P}  \\
\mathbf{0}_{r\times k}   & \mathbf{C}_{R,P}  \\
\end{matrix}\right]
\cdot (x\cdot w)^{d-i}\\
&\overset{(b)}=\det[\mathbf{C}_{R,:}(\mathbf{C}_{R,:})^{\rm T}]
\sum_{i=0}^{\min\{d,n-k\}}(-1)^{i}\sum_{\substack{L\subset[n]\\|L|=k+i}}
\sum_{\substack{W\subset[d]\\|W|=i}}
\det\left[\begin{matrix}
(\mathbf{B}_{L,S})^{\rm T}   \\
(\mathbf{A}_{L,:} (\mathbf{P}_{R})_{:,W} )^{\rm T}
\end{matrix}\right]
\det\left[\begin{matrix}
 \mathbf{B}_{L,S} & \mathbf{A}_{L,:} (\mathbf{P}_{R})_{:,W}  \\
\end{matrix}\right]
\cdot (x\cdot w)^{d-i}\\
&\overset{(c)}=\det[\mathbf{C}_{R,:}(\mathbf{C}_{R,:})^{\rm T}]
\sum_{i=0}^{\min\{d,n-k\}}(-1)^{i}
\sum_{\substack{W\subset[d]\\|W|=i}}
\det\left[\begin{pmatrix}
(\mathbf{B}_{:,S})^{\rm T}   \\
(\mathbf{A}_{} (\mathbf{P}_{R})_{:,W} )^{\rm T}
\end{pmatrix}\cdot
\begin{pmatrix}
 \mathbf{B}_{:,S} & \mathbf{A} (\mathbf{P}_{R})_{:,W}  \\
\end{pmatrix}\right]
\cdot (x\cdot w)^{d-i}.
\end{aligned}
\end{equation*}
\endgroup
Here, equation ($a$) is derived from \eqref{lemmas: P-exp-base:eq2} and Lemma \ref{lemmas: P-exp-base} (ii). Equation ($b$) is a result of Lemma \ref{lemmas: P-exp-base} (iv), and equation ($c$) is obtained from the application of the Cauchy-Binet formula.

If $|W|>n-k$ then $\begin{pmatrix}
 \mathbf{B}_{:,S} & \mathbf{A} (\mathbf{P}_{R})_{:,W}  \\
\end{pmatrix}\in\mathbb{R}^{n\times (k+|W|)}$ is a short fat matrix, implying that
\begin{equation*}	
\det\left[\begin{pmatrix}
(\mathbf{B}_{:,S})^{\rm T}   \\
(\mathbf{A}_{} (\mathbf{P}_{R})_{:,W} )^{\rm T}
\end{pmatrix}\cdot
\begin{pmatrix}
 \mathbf{B}_{:,S} & \mathbf{A} (\mathbf{P}_{R})_{:,W}  \\
\end{pmatrix}\right]=0.
\end{equation*}
Therefore, we further have
\begin{equation*}
\begin{aligned}
\text{RHS of \eqref{eq:P-exp-base}}&=\det[\mathbf{C}_{R,:}(\mathbf{C}_{R,:})^{\rm T}]\cdot
\sum_{i=0}^{d}(-1)^{i}
\sum_{\substack{W\subset[d]\\|W|=i}}
\det\left[\begin{pmatrix}
(\mathbf{B}_{:,S})^{\rm T}   \\
(\mathbf{A}_{} (\mathbf{P}_{R})_{:,W} )^{\rm T}
\end{pmatrix}\cdot
\begin{pmatrix}
 \mathbf{B}_{:,S} & \mathbf{A} (\mathbf{P}_{R})_{:,W}  \\
\end{pmatrix}\right]
\cdot (x\cdot w)^{d-i}\\
&=\det[\mathbf{C}_{R,:}(\mathbf{C}_{R,:})^{\rm T}]\det[\mathbf{B}_{:,S}^{\rm T} \mathbf{B}_{:,S}]
\sum_{i=0}^{d}(-1)^{i}
\sum_{\substack{W\subset[d]\\|W|=i}}
\det[ (\mathbf{P}_{R})_{W,:}\mathbf{A}_{}^{\rm T} \mathbf{Q}_S \mathbf{A}_{} (\mathbf{P}_{R})_{:,W}]
\cdot (x\cdot w)^{d-i}\\
&=\det[\mathbf{C}_{R,:}(\mathbf{C}_{R,:})^{\rm T}]\cdot \det[\mathbf{B}_{:,S}^{\rm T} \mathbf{B}_{:,S}] \cdot \det[x\cdot w\cdot \mathbf{I}_{d}-\mathbf{P}_{R}\mathbf{A}^{\rm T}\mathbf{Q}_S \mathbf{A}_{}\mathbf{P}_{R}]=\text{LHS of \eqref{eq:P-exp-base}},
\end{aligned}	
\end{equation*}
where the second equation follows from Lemma \ref{lemma2.2}. This completes the proof.

\end{proof}

\subsection{Proof of Proposition \ref{P-three-exp}}\label{proof of P-three-exp}

Here we present a proof of Proposition \ref{P-three-exp}.

\begin{proof}[Proof of Proposition \ref{P-three-exp}]
We begin by demonstrating the validity of \eqref{P-exp}.
  By Leibniz formula, for any multi-affine polynomial $g(y_1,\ldots,y_m)$ we have the following algebraic identity:
\begin{equation}\label{Leibniz formula}
\sum_{S\subset[m],\vert S\vert=k}  \partial_{\mathbf{Y}^{S^C}}\ 	g(y_1,\ldots,y_m)\ \Big|_{y_i=y,\forall i}=\frac{1}{(m-k)!}\cdot  \partial_y^{m-k}\ g(y,\ldots,y).
\end{equation}
Therefore, we arrive at
\begin{equation*}
\begin{aligned}
P_{k,r}(x;\mathbf{A},\mathbf{B},\mathbf{C})
&\overset{(a)}=(-1)^{k+r}\cdot  x^r\sum_{R\subset[n_C],|R|=r}	 \sum_{S\subset[d_B],|S|=k} \partial_{\mathbf{Y}^{S^C}} \partial_{\mathbf{Z}^{R^C}}\ H(x,1,\mathbf{Y},\mathbf{Z};\mathbf{A},\mathbf{B},\mathbf{C})  \ \Bigg\vert_{\substack{y_i=0,\forall i\in[d_B]\\z_j=0,\forall j\in[n_C]}}\\
&\overset{(b)}=\frac{(-1)^{k+r}}{ (d_B-k)!(n_C-r)!}\cdot x^{r} \cdot \partial_y^{d_B-k}\partial_z^{n_C-r}\ H(x,y,z,1;\mathbf{A},\mathbf{B},\mathbf{C})  \ \Big\vert_{y=z=0},
\end{aligned}
\end{equation*}
where ($a$) follows from Proposition \ref{P-exp-base} by taking $w=1$,  and ($b$) follows from \eqref{Leibniz formula}.
We then proceed to prove  \eqref{P-exp-1}. A straightforward calculation yields:
\begin{equation*}
\begin{aligned}
H(x,y,z,1;\mathbf{A},\mathbf{B},\mathbf{C})&=x^{d}y^{d_B}z^{n_C}\cdot \det\left[\begin{matrix}
\mathbf{I}_n  & \mathbf{0} & \frac{1}{y}\cdot  \mathbf{B} & \frac{1}{x}\cdot\mathbf{A}  \\
\mathbf{0}  & \mathbf{I}_{n_C} &  \mathbf{0} & \frac{1}{xz}\cdot\mathbf{C}  \\
\mathbf{B}^{\rm T}  &  \mathbf{0} & \mathbf{I}_{d_B} & \mathbf{0}  \\
\mathbf{A}^{\rm T}  & \mathbf{C}^{\rm T} &  \mathbf{0} & \mathbf{I}_d
\end{matrix}\right]\\
&\overset{(a)}=x^{d}y^{d_B}z^{n_C}\cdot \det\left[\mathbf{I}_{d_B+d}-
\begin{pmatrix}
\mathbf{B}^{\rm T}  &   \mathbf{0}   \\
\mathbf{A}^{\rm T}  & \mathbf{C}^{\rm T}
\end{pmatrix}\begin{pmatrix}
\frac{1}{y}\cdot  \mathbf{B} & \frac{1}{x}\cdot\mathbf{A}\\
\mathbf{0}  & \frac{1}{xz}\cdot\mathbf{C}
\end{pmatrix}\right]\\
&=z^{n_C} \det\left[
\begin{matrix}
y\cdot \mathbf{I}_{d_B}-\mathbf{B}^{\rm T}\mathbf{B}  &  - \mathbf{B}^{\rm T}\mathbf{A}   \\
-\mathbf{A}^{\rm T} \mathbf{B} &  x\cdot \mathbf{I}_{d}- \mathbf{A}^{\rm T} \mathbf{A}-\frac{1}{z}\cdot\mathbf{C}^{\rm T} \mathbf{C}
\end{matrix}\right]\\
&=z^{n_C}
\det\left[
\begin{pmatrix}
x\cdot \mathbf{I}_d  -\frac{1}{z}\cdot
\mathbf{C}^{\rm T}\mathbf{C} & \mathbf{0}  \\
\mathbf{0}  & y\cdot \mathbf{I}_{d_B}
\end{pmatrix}
-
\begin{pmatrix}
\mathbf{A}^{\rm T}   \\
\mathbf{B}^{\rm T}
\end{pmatrix}
\begin{pmatrix}
\mathbf{A} & \mathbf{B}
\end{pmatrix}
\right],
\end{aligned}	
\end{equation*}
where ($a$) follows from \eqref{lemmas: P-exp-base:eq1} in Lemma \ref{lemmas: P-exp-base} (i).
Also note that
\begin{equation*}
\frac{1}{(n_C-r)!}\cdot \partial_z^{n_C-r}\ H(x,y,z,1;\mathbf{A},\mathbf{B},\mathbf{C})  \ \Big\vert_{z=0}= \frac{1}{r!}\cdot \partial_z^{r}\ \big( z^{n_C}\cdot  H(x,y,1/z,1;\mathbf{A},\mathbf{B},\mathbf{C})\big)  \ \Big\vert_{z=0}.
\end{equation*}
Therefore, combining with \eqref{P-exp} we arrive at
\begin{equation*}
\begin{aligned}
P_{k,r}(x;\mathbf{A},\mathbf{B},\mathbf{C})&=\frac{(-1)^{k+r}}{r! (d_B-k)!}\cdot x^{r} \cdot \partial_y^{d_B-k}\partial_z^{r}\big( z^{n_C}\cdot  H(x,y,1/z,1;\mathbf{A},\mathbf{B},\mathbf{C})  \big)\ \Big\vert_{y=z=0}\\
&=\frac{(-1)^{k+r}}{ r!(d_B-k)!}\cdot x^{r} \cdot \partial_y^{d_B-k}\partial_z^{r}\
\det\left[
\begin{pmatrix}
x\cdot \mathbf{I}_d -z\cdot \mathbf{C}^{\rm T}\mathbf{C} & \mathbf{0}  \\
\mathbf{0}  & y\cdot \mathbf{I}_{d_B}
\end{pmatrix}
-
\begin{pmatrix}
\mathbf{A}^{\rm T}   \\
\mathbf{B}^{\rm T}
\end{pmatrix}
\begin{pmatrix}
\mathbf{A} & \mathbf{B}
\end{pmatrix}
\right]\ \Bigg\vert_{y=z=0}\\
&=\frac{(-1)^{k}}{ r!(d_B-k)!}\cdot x^{r} \cdot \partial_y^{d_B-k}\partial_z^{r}\
\det\left[
\begin{pmatrix}
x\cdot \mathbf{I}_d +z\cdot \mathbf{C}^{\rm T}\mathbf{C} & \mathbf{0}  \\
\mathbf{0}  & y\cdot \mathbf{I}_{d_B}
\end{pmatrix}
-
\begin{pmatrix}
\mathbf{A}^{\rm T}   \\
\mathbf{B}^{\rm T}
\end{pmatrix}
\begin{pmatrix}
\mathbf{A} & \mathbf{B}
\end{pmatrix}
\right]\ \Bigg\vert_{y=z=0},
\end{aligned}	
\end{equation*}
where the last equation follows from the chain rule.

Finally, applying \eqref{P-exp-1} to the matrices $\mathbf{A}^{\rm T},\mathbf{C}^{\rm T}$, $\mathbf{B}^{\rm T}$ and using Lemma \ref{P-sym}, we immediately obtain \eqref{P-exp-2}.
	
\end{proof}

\subsection{Proof of Proposition \ref{P-exp-r=0}}\label{proof of P-exp-r=0}

Here we present a proof of Proposition \ref{P-exp-r=0}.

\begin{proof}[Proof of Proposition \ref{P-exp-r=0}]
Substituting $r=0$ into \eqref{P-exp-1} we immediately obtain \eqref{lemma_f_k:expression1}. We next prove \eqref{lemma_f_k:expression2}. A simple calculation shows that
\begin{equation*}
\begin{aligned}
f(x,y)&:=\det\left[\begin{pmatrix}
x\cdot \mathbf{I}_d  &  \mathbf{0}  \\
\mathbf{0} & y\cdot\mathbf{I}_{d_B}
\end{pmatrix}-\begin{pmatrix}
\mathbf{A}^{\rm T}     \\
\mathbf{B}^{\rm T}
\end{pmatrix}\begin{pmatrix}
\mathbf{A}  &  \mathbf{B}
\end{pmatrix}\right]=x^dy^{d_B}\cdot \det\left[\begin{pmatrix}
\mathbf{I}_d  &  \mathbf{0}  \\
\mathbf{0} & \mathbf{I}_{d_B}
\end{pmatrix}-\begin{pmatrix}
\mathbf{A}^{\rm T}     \\
\mathbf{B}^{\rm T}
\end{pmatrix}\begin{pmatrix}
\frac{1}{x}\cdot \mathbf{A}  & \frac{1}{y}\cdot  \mathbf{B}
\end{pmatrix}\right]	\\
&\overset{(a)}=x^dy^{d_B}\cdot  \det[\mathbf{I}_n-\frac{1}{x}\cdot \mathbf{A}\mathbf{A}^{\rm T}-\frac{1}{y}\cdot  \mathbf{B}\mathbf{B}^{\rm T}],\\
\end{aligned}
\end{equation*}
where equation ($a$) follows from the Weinstein-Aronszajn identity \eqref{Weinstein-Aronszajn}. Note that
\begin{equation}\label{lemma4proof:eq2}
\begin{aligned}
\frac{1}{(d_B-k)!}\cdot\partial_y^{d_B-k} f(x,y)\ \big|_{y=0}&=\frac{1}{k!}\cdot\partial_y^{k}\big(  y^{d_B}\cdot f(x,1/y)\big) \ \big|_{y=0}\\
&=\frac{1}{k!}\cdot     \partial_y^{k}\big(x^d\cdot   \det[\mathbf{I}_n- \frac{1}{x}\cdot \mathbf{A}\mathbf{A}^{\rm T}-y\cdot  \mathbf{B}\mathbf{B}^{\rm T}] \big) \ \big|_{y=0}\\
&=\frac{1}{k!}\cdot x^{d-n}\cdot \partial_y^{k}\big( \det[x\cdot \mathbf{I}_n- \mathbf{A}\mathbf{A}^{\rm T}-xy\cdot  \mathbf{B}\mathbf{B}^{\rm T}] \big) \ \big|_{y=0}\\
&=\frac{1}{k!}\cdot x^{d-n+k}\cdot \partial_y^{k}\big( \det[x\cdot \mathbf{I}_n- \mathbf{A}\mathbf{A}^{\rm T}-y\cdot  \mathbf{B}\mathbf{B}^{\rm T}] \big) \ \big|_{y=0},
\end{aligned}
\end{equation}
where the last equation follows from the chain rule.
Substituting \eqref{lemma4proof:eq2} into \eqref{lemma_f_k:expression1}, we arrive at \eqref{lemma_f_k:expression2}.

\end{proof}

\subsection{Proof of Proposition \ref{P-exp-I}}\label{proof of P-exp-I}

Here we present a proof of Proposition \ref{P-exp-I}.
We need the following lemma. It can be proved by repeatedly using Thompson's formula \cite{Thompson}, which states that the derivative of a matrix's characteristic polynomial equals the sum of the characteristic polynomials of its defect $1$ submatrices.

\begin{lemma}{\rm \cite[Lemma 4.1]{ravi3}}\label{lemmas: det}
Let $\mathbf{M}\in\mathbb{R}^{d\times d}$ be a square matrix. For any integer $k\in[d]$, we have
\begin{equation*}
\frac{1}{k!}\sum_{S\subset[d],|S|=d-k}\det[x\cdot \mathbf{I}_{d-k}-\mathbf{M}_{S,S}]	= \partial_x^{k}\det[x\cdot \mathbf{I}_{d}-\mathbf{M}].
\end{equation*} 
\end{lemma}

Now we present a proof of Proposition \ref{P-exp-I}.

\begin{proof}[Proof of Proposition \ref{P-exp-I}]
We first prove equation \eqref{P-exp-II-oct}.
By Definition \ref{def2} we have
\begin{equation}\label{octeq-2024-4}
\begin{aligned}
P_{k-|S|,r-|R|}(x;\mathbf{Q}_{S}\mathbf{A}\mathbf{P}_{R},\mathbf{Q}_{S},\mathbf{P}_{R})
&=\sum_{\substack{T\subset R^C\\|T|=r-|R|}}\sum_{\substack{W\subset S^C\\|W|=k-|S|}}\det[x\cdot\mathbf{I}_d-\mathbf{P}_{R\cup T}\mathbf{A}^{\rm T}\mathbf{Q}_{S\cup W}\mathbf{A}\mathbf{P}_{R\cup T} ],\\
&=\sum_{\substack{T\subset R^C\\|T|=r-|R|}}\sum_{\substack{W\subset S^C\\|W|=k-|S|}}\det[x\cdot\mathbf{I}_d-\mathbf{Q}_{S\cup W}\mathbf{A}\mathbf{P}_{R\cup T}\mathbf{A}^{\rm T}\mathbf{Q}_{S\cup W} ],\\
\end{aligned}
\end{equation}
where we use $\mathbf{B}=\mathbf{C}=\mathbf{I}_d$ in the first equation and use the Weinstein-Aronszajn identity \eqref{Weinstein-Aronszajn2} in the second equation.  
Note that
\begingroup\fontsize{10pt}{12pt}\selectfont
\begin{equation*}
\begin{aligned}
\sum_{\substack{W\subset S^C\\|W|=k-|S|}}\det[x\cdot\mathbf{I}_d-\mathbf{Q}_{S\cup W}\mathbf{A}\mathbf{P}_{R\cup T}\mathbf{A}^{\rm T}\mathbf{Q}_{S\cup W} ]
&\overset{(a)}=x^k \sum_{\substack{W\subset S^C\\|W|=k-|S|}}\det[x\cdot\mathbf{I}_{d-k}-\mathbf{A}_{S^C\backslash W,R^C\backslash T}(\mathbf{A}_{S^C\backslash W,R^C\backslash T})^{\rm T} ]\\
&\overset{(b)}=\frac{x^k}{(k-|S|)!} \partial_x^{k-|S|} \det[x\cdot\mathbf{I}_{d-|S|}-\mathbf{A}_{S^C,R^C\backslash T}(\mathbf{A}_{S^C,R^C\backslash T})^{\rm T} ]\\
&\overset{(c)}=\frac{x^k}{(k-|S|)!} \partial_x^{k-|S|}\Big( x^{r-|S|}  \det[x\cdot\mathbf{I}_{d-r}-(\mathbf{A}_{S^C,R^C\backslash T})^{\rm T}\mathbf{A}_{S^C,R^C\backslash T} ]\Big),
\end{aligned}
\end{equation*}
\endgroup
where equation ($a$) follows from the assumption that $\mathbf{B}=\mathbf{C}=\mathbf{I}_d$,  equation ($b$) follows from Lemma \ref{lemmas: det}, and equation ($c$) follow from the Weinstein-Aronszajn identity \eqref{Weinstein-Aronszajn2}. Substituting the above equation into \eqref{octeq-2024-4}, we obtain
\begingroup\fontsize{10pt}{12pt}\selectfont
\begin{equation*}
\begin{aligned}
P_{k-|S|,r-|R|}(x;\mathbf{Q}_{S}\mathbf{A}\mathbf{P}_{R},\mathbf{Q}_{S},\mathbf{P}_{R})
&=\frac{x^k}{(k-|S|)!} \partial_x^{k-|S|}
\Big( x^{r-|S|} \sum_{\substack{T\subset R^C\\|T|=r-|R|}} \det[x\cdot\mathbf{I}_{d-r}-(\mathbf{A}_{S^C,R^C\backslash T})^{\rm T}\mathbf{A}_{S^C,R^C\backslash T} ]\Big)\\
&=\frac{x^k}{(k-|S|)!(r-|R|)!} \partial_x^{k-|S|}
\Big( x^{r-|S|} \partial_x^{r-|R|} \det[x\cdot\mathbf{I}_{d-|R|}-(\mathbf{A}_{S^C,R^C})^{\rm T}\mathbf{A}_{S^C,R^C} ]\Big),
\end{aligned}	
\end{equation*}
\endgroup
where we use the linearity of the differentiation in the first equation, and use Lemma \ref{lemmas: det} again in the second equation.  Thus, we arrive at equation \eqref{P-exp-II-oct}.

We next prove \eqref{P-exp-II}. Take $S=R=\emptyset$. Then we have $\mathbf{Q}_{S}=\mathbf{I}_n$ and $\mathbf{P}_{R}=\mathbf{I}_d$, and equation \eqref{P-exp-II-oct} immediately simplifies to \eqref{P-exp-II}. Finally, note that the Laguerre derivative operator $\partial_x\cdot x\cdot \partial_x$ satisfies the identity $(\partial_x\cdot x\cdot \partial_x)^l=\partial_x^l\cdot x^l\cdot \partial_x^l$ for any positive integer $l$ \cite{RS22,Vis94}. Therefore, \eqref{P-exp-II} becomes \eqref{P-exp-II-square} when $r=k$.

\end{proof}

\subsection{Proof of Proposition \ref{P-recursive2}}\label{proof of P-recursive2}

Here we present a proof of Proposition \ref{P-recursive2}. We need the following lemmas.

\begin{lemma}\label{lemma2.0}{\rm \cite[Appendix]{MK85}}
Let $\mathbf{Q}\in\mathbb{R}^{n\times n}$  be an orthogonal projection matrix and let $\mathbf{B}\in\mathbb{R}^{n\times d}$. Then we have $(\mathbf{Q}\mathbf{B})^{\dagger}\mathbf{Q}=(\mathbf{Q}\mathbf{B})^{\dagger}$.

\end{lemma}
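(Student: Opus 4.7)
The plan is to set $\mathbf{M}:=\mathbf{Q}\mathbf{B}$ and verify the identity $\mathbf{M}^{\dagger}\mathbf{Q}=\mathbf{M}^{\dagger}$ through the Moore-Penrose axioms, using only the two defining properties $\mathbf{Q}^{\rm T}=\mathbf{Q}$ and $\mathbf{Q}^{2}=\mathbf{Q}$ of an orthogonal projector. The whole argument breaks into a short range-containment observation followed by one line of algebra.

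First I would establish the auxiliary identity $\mathbf{M}\mathbf{M}^{\dagger}\mathbf{Q}=\mathbf{M}\mathbf{M}^{\dagger}$. The key observation is that every column of $\mathbf{M}=\mathbf{Q}\mathbf{B}$, and hence every column of $\mathbf{M}\mathbf{M}^{\dagger}$, lies in $\mathrm{range}(\mathbf{Q})$. Since $\mathbf{Q}$ acts as the identity on its own range, this yields $\mathbf{Q}\mathbf{M}\mathbf{M}^{\dagger}=\mathbf{M}\mathbf{M}^{\dagger}$. By the Moore-Penrose symmetry axiom $(\mathbf{M}\mathbf{M}^{\dagger})^{\rm T}=\mathbf{M}\mathbf{M}^{\dagger}$, together with $\mathbf{Q}^{\rm T}=\mathbf{Q}$, taking the transpose of the previous identity gives $\mathbf{M}\mathbf{M}^{\dagger}\mathbf{Q}=\mathbf{M}\mathbf{M}^{\dagger}$.

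The second step is a one-line calculation that uses the Moore-Penrose axiom $\mathbf{M}^{\dagger}=\mathbf{M}^{\dagger}\mathbf{M}\mathbf{M}^{\dagger}$:
\begin{equation*}
\mathbf{M}^{\dagger}\mathbf{Q}=\mathbf{M}^{\dagger}\mathbf{M}\mathbf{M}^{\dagger}\mathbf{Q}=\mathbf{M}^{\dagger}(\mathbf{M}\mathbf{M}^{\dagger}\mathbf{Q})=\mathbf{M}^{\dagger}\mathbf{M}\mathbf{M}^{\dagger}=\mathbf{M}^{\dagger},
\end{equation*}
which finishes the proof. There is no genuine obstacle: the entire argument is short algebraic manipulation once one recognizes that $\mathrm{range}(\mathbf{Q}\mathbf{B})\subset\mathrm{range}(\mathbf{Q})$ combined with the self-adjointness of both $\mathbf{Q}$ and $\mathbf{M}\mathbf{M}^{\dagger}$ is precisely the input that reduces the claim to the standard pseudoinverse identities. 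An equally short alternative route would use the familiar formula $\mathbf{M}^{\dagger}=(\mathbf{M}^{\rm T}\mathbf{M})^{\dagger}\mathbf{M}^{\rm T}$ together with the computation $(\mathbf{Q}\mathbf{B})^{\rm T}\mathbf{Q}=\mathbf{B}^{\rm T}\mathbf{Q}^{2}=\mathbf{B}^{\rm T}\mathbf{Q}=(\mathbf{Q}\mathbf{B})^{\rm T}$, from which $\mathbf{M}^{\dagger}\mathbf{Q}=\mathbf{M}^{\dagger}$ follows immediately.
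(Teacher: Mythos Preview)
Your proof is correct. Both the main argument via $\mathbf{M}\mathbf{M}^{\dagger}\mathbf{Q}=\mathbf{M}\mathbf{M}^{\dagger}$ and the alternative route through $\mathbf{M}^{\dagger}=(\mathbf{M}^{\rm T}\mathbf{M})^{\dagger}\mathbf{M}^{\rm T}$ are valid and complete. The paper itself does not prove this lemma; it simply cites it from the appendix of \cite{MK85}, so your self-contained argument actually supplies more than the paper does here.
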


\begin{lemma}
%\label{P-recursive}
Let $\mathbf{A}\in\mathbb{R}^{n\times d}$, $\mathbf{B}=[\mathbf{b}_1,\ldots,\mathbf{b}_{d_B}]\in\mathbb{R}^{n\times d_{{B}}}$, $\mathbf{C}=[\mathbf{c}_1,\ldots,\mathbf{c}_{n_C}]^{\rm T}\in\mathbb{R}^{n_{{C}}\times d}$. Let $k$ and $r$ be two integers satisfying $1\leq k\leq \mathrm{rank}(\mathbf{B})$ and $1\leq r\leq \mathrm{rank}(\mathbf{C})$. Then we have
\begin{equation}\label{P-recursive-AB}
P_{k,r}(x;\mathbf{A},\mathbf{B},\mathbf{C})=\frac{1}{k}	\sum_{i=1}^{d_B}\Vert \mathbf{b}_{i}\Vert^2\cdot P_{k-1,r}(x;\mathbf{Q}_{\{i\}}\cdot \mathbf{A},\mathbf{Q}_{\{i\}}\cdot\mathbf{B}, \mathbf{C}),
\end{equation}	
and
\begin{equation}\label{P-recursive-AC}
P_{k,r}(x;\mathbf{A},\mathbf{B},\mathbf{C})=	\frac{1}{r}\sum_{i=1}^{n_C}\Vert \mathbf{c}_{i}\Vert^2\cdot P_{k,r-1}(x;\mathbf{A}\cdot \mathbf{P}_{\{i\}},\mathbf{B},\mathbf{C}\cdot \mathbf{P}_{\{i\}}),
\end{equation}	
where $\mathbf{Q}_{\{i\}}$ and $\mathbf{P}_{\{i\}}$ are defined in \eqref{QSPR}.

\end{lemma}

\begin{proof}
We first prove \eqref{P-recursive-AB}.
Note that each $k$-subset $S\subset[d_B]$ can be associated to $k!$ ordered tuples $(i_1,\ldots,i_k)$ such that $S=\{i_1,\ldots,i_k\}$. Therefore, we can rewrite $P_{k,r}	(x;\mathbf{A},\mathbf{B},\mathbf{C})$ as
\begingroup\fontsize{10.5pt}{12pt}\selectfont
\begin{equation}\label{P-recursive-AB:eq1}
\begin{aligned}
&\quad \ P_{k,r}	(x;\mathbf{A},\mathbf{B},\mathbf{C})\\
&=\frac{1}{k!}\sum_{\substack{R\subset[n_C]\\|R|=r}}\sum_{i_1=1}^{d_B} \sum_{i_2\in [d_B]\backslash\{i_1\}}\cdots \sum_{i_k\in [d_B]\backslash\{i_1,\ldots,i_{k-1}\}} \det[\mathbf{B}_{:,\{i_1,\ldots,i_{k}\}}^{\rm T}\mathbf{B}_{:,\{i_1,\ldots,i_{k}\}}]\cdot \det[\mathbf{C}_{R,:}(\mathbf{C}_{R,:})^{\rm T}]\cdot p_{\{i_1,\ldots,i_{k}\},R}(x;\mathbf{A},\mathbf{B},\mathbf{C})\\
&\overset{(a)}=\frac{1}{k!}\sum_{\substack{R\subset[n_C]\\|R|=r}}\sum_{i_1=1}^{d_B} \Vert \mathbf{b}_{i_1}\Vert^2 \sum_{i_2\in [d_B]\backslash\{i_1\}}\cdots \sum_{i_k\in [d_B]\backslash\{i_1,\ldots,i_{k-1}\}}  \\
&\quad\quad \det[\mathbf{B}_{:,\{i_2,\ldots,i_{k}\}}^{\rm T}\mathbf{Q}_{\{i_1\}}\mathbf{B}_{:,\{i_2,\ldots,i_{k}\}}] \det[\mathbf{C}_{R,:}(\mathbf{C}_{R,:})^{\rm T}]\cdot p_{\{i_1,\ldots,i_{k}\},R}(x;\mathbf{A},\mathbf{B},\mathbf{C})\\
&=\frac{(k-1)!}{k!}\sum_{\substack{R\subset[n_C]\\|R|=r}}\sum_{i_1=1}^{d_B} \Vert \mathbf{b}_{i_1}\Vert^2 \sum_{\substack{S\subset[d_B]\backslash\{i_1\}\\|S|=k-1}} \det[\mathbf{B}_{:,S}^{\rm T}\mathbf{Q}_{\{i_1\}}\mathbf{B}_{:,S}]\cdot \det[\mathbf{C}_{R,:}(\mathbf{C}_{R,:})^{\rm T}]\cdot p_{S\cup \{i_1\},R}(x;\mathbf{A},\mathbf{B},\mathbf{C})\\
&\overset{(b)}=\frac{1}{k}\sum_{i_1=1}^{d_B} \Vert \mathbf{b}_{i_1}\Vert^2 \sum_{\substack{R\subset[n_C]\\|R|=r}} \sum_{\substack{S\subset[d_B]\\|S|=k-1}} \det[(\mathbf{Q}_{\{i_1\}}\mathbf{B}_{:,S})^{\rm T}(\mathbf{Q}_{\{i_1\}}\mathbf{B}_{:,S})]\cdot \det[\mathbf{C}_{R,:}(\mathbf{C}_{R,:})^{\rm T}]\cdot p_{S\cup \{i_1\},R}(x;\mathbf{A},\mathbf{B},\mathbf{C}).
\end{aligned}
\end{equation}
\endgroup
Here, equation ($a$) follows from Lemma \ref{lemma2.2}, and equation ($b$) follows from the identity
$\det[\mathbf{B}_{:,S}^{\rm T}\mathbf{Q}_{\{i_1\}}\mathbf{B}_{:,S}]=\det[(\mathbf{Q}_{\{i_1\}}\mathbf{B}_{:,S})^{\rm T}(\mathbf{Q}_{\{i_1\}}\mathbf{B}_{:,S})]$ and the fact that $\det[\mathbf{B}_{:,S}^{\rm T}\mathbf{Q}_{\{i_1\}}\mathbf{B}_{:,S}]=0$ if $i_1\in S$.
Then, to prove \eqref{P-recursive-AB}, it is enough to show that for each $i\in[d_B]$ and for each two subsets $S\subset[d_B]$ and $R\subset[n_C]$,
\begin{equation*}
p_{S\cup \{i\},R}(x;\mathbf{A},\mathbf{B},\mathbf{C})=	p_{S,R}(x;\mathbf{Q}_{\{i\}}\cdot \mathbf{A},\mathbf{Q}_{\{i\}}\cdot\mathbf{B},\mathbf{C}).
\end{equation*}
Note that by Lemma \ref{lemma2.1} we have
\begin{equation*}
\mathbf{Q}_{S\cup \{i\}}=\mathbf{I}_n-\mathbf{B}_{:,S\cup \{i\}}\mathbf{B}_{:,S\cup \{i\}}^{\dagger}=\mathbf{I}_n-\big(\mathbf{b}_{i}\mathbf{b}_{i}^{\dagger}+(\mathbf{Q}_{\{i\}} \mathbf{B}_{:,S})(\mathbf{Q}_{\{i\}} \mathbf{B}_{:,S})^{\dagger}\big)= \mathbf{Q}_{\{i\}}-(\mathbf{Q}_{\{i\}} \mathbf{B}_{:,S})(\mathbf{Q}_{\{i\}} \mathbf{B}_{:,S})^{\dagger},
\end{equation*}
so we can rewrite $p_{S\cup \{i\},R}(x;\mathbf{A},\mathbf{B},\mathbf{C})$ as
\begin{equation*}
p_{S\cup \{i\},R}(x;\mathbf{A},\mathbf{B},\mathbf{C})=\det[x\cdot \mathbf{I}_{d}-\mathbf{P}_R\mathbf{A}^{\rm T}\mathbf{Q}_{S\cup \{i\}}\mathbf{A}\mathbf{P}_R]=\det[x\cdot \mathbf{I}_{d}-\mathbf{P}_R\mathbf{A}^{\rm T}\big(\mathbf{Q}_{\{i\}}-(\mathbf{Q}_{\{i\}} \mathbf{B}_{:,S})(\mathbf{Q}_{\{i\}} \mathbf{B}_{:,S})^{\dagger}\big)\mathbf{A}\mathbf{P}_R].
\end{equation*}
 It is worth noting that according to Lemma
 \ref{lemma2.0} we have $(\mathbf{Q}_{\{i\}} \mathbf{B}_{:,S})^{\dagger}=(\mathbf{Q}_{\{i\}} \mathbf{B}_{:,S})^{\dagger}\mathbf{Q}_{\{i\}}$.
This allows us to further express $p_{S\cup \{i\},R}(x;\mathbf{A},\mathbf{B},\mathbf{C})$ as
\begin{equation*}
\begin{aligned}
p_{S\cup \{i\},R}(x;\mathbf{A},\mathbf{B},\mathbf{C})&=\det[x\cdot \mathbf{I}_{d}-\mathbf{P}_R(\mathbf{Q}_{\{i\}}\mathbf{A})^{\rm T}\big(\mathbf{I}_n-(\mathbf{Q}_{\{i\}} \mathbf{B}_{:,S})(\mathbf{Q}_{\{i\}} \mathbf{B}_{:,S})^{\dagger}\big)(\mathbf{Q}_{\{i\}}\mathbf{A})\mathbf{P}_R]\\
&=p_{S,R}(x;\mathbf{Q}_{\{i\}}\cdot \mathbf{A},\mathbf{Q}_{\{i\}}\cdot\mathbf{B},\mathbf{C}).	
\end{aligned}
\end{equation*}
Substituting the above equation into \eqref{P-recursive-AB:eq1} we obtain \eqref{P-recursive-AB}. Applying the result of \eqref{P-recursive-AB} to the matrices $\mathbf{A}^{\rm T},\mathbf{C}^{\rm T}$ and $\mathbf{B}^{\rm T}$ and using Lemma \ref{P-sym}, we immediately obtain \eqref{P-recursive-AC}. This completes the proof of the lemma.

\end{proof}

Now we can give a proof of Proposition \ref{P-recursive2}.

\begin{proof}[Proof of Proposition \ref{P-recursive2}]
We first prove \eqref{P-recursive2-AB}.
Applying \eqref{P-recursive-AB} to the matrices $\mathbf{Q}_{S}\mathbf{A}\mathbf{P}_{R}\in\mathbb{R}^{n\times d}$, $\mathbf{Q}_{S}\mathbf{B}=[\mathbf{Q}_{S}\mathbf{b}_1,\ldots,\mathbf{Q}_{S}\mathbf{b}_{d_B}]\in\mathbb{R}^{n\times d_B}$ and $\mathbf{C}\mathbf{P}_{R}\in\mathbb{R}^{n_C\times d}$, we obtain that
\begin{equation}\label{Spr:eq2}
\begin{aligned}
& P_{k-l,r-t}(x;\mathbf{Q}_{S}\mathbf{A}\mathbf{P}_{R},\mathbf{Q}_{S}\mathbf{B},\mathbf{C}\mathbf{P}_{R})\\
&=\frac{1}{k-l}	\sum_{i:\Vert \mathbf{Q}_{S}\mathbf{b}_{i} \Vert\neq 0}^{}\Vert \mathbf{Q}_{S}\mathbf{b}_{i} \Vert^2\cdot P_{k-l-1,r-t}(x;\big(\mathbf{I}_n-(\mathbf{Q}_{S}\mathbf{b}_{i})(\mathbf{Q}_{S}\mathbf{b}_{i})^{\dagger}\big)\mathbf{Q}_{S}\mathbf{A}\mathbf{P}_{R},\big(\mathbf{I}_n-(\mathbf{Q}_{S}\mathbf{b}_{i})(\mathbf{Q}_{S}\mathbf{b}_{i})^{\dagger}\big)\mathbf{Q}_{S}\mathbf{B},\mathbf{C} \mathbf{P}_{R}).
\end{aligned}
\end{equation}
Note that
\begingroup\fontsize{11pt}{12pt}\selectfont
\begin{equation*}\label{remark34:eq1}
\big(\mathbf{I}_n-(\mathbf{Q}_{S}\mathbf{b}_{i})(\mathbf{Q}_{S}\mathbf{b}_{i})^{\dagger}\big)\mathbf{Q}_{S}=\mathbf{Q}_{S}-(\mathbf{Q}_{S}\mathbf{b}_{i})(\mathbf{Q}_{S}\mathbf{b}_{i})^{\dagger}\mathbf{Q}_{S}\overset{(a)}=	\mathbf{Q}_{S}-(\mathbf{Q}_{S}\mathbf{b}_{i})(\mathbf{Q}_{S}\mathbf{b}_{i})^{\dagger}\overset{(b)}=\mathbf{Q}_{S\cup\{i\}},
\end{equation*}
\endgroup
where ($a$) follows from Lemma \ref{lemma2.0} and ($b$) follows from Lemma \ref{lemma2.1}. Therefore, combining the above equation with \eqref{Spr:eq2} we obtain \eqref{P-recursive2-AB}.
Similarly, we can prove \eqref{P-recursive2-AC} by applying \eqref{P-recursive-AC} to the matrices $\mathbf{Q}_{S}\mathbf{A}\mathbf{P}_{R}$, $\mathbf{Q}_{S}\mathbf{B}$ and $\mathbf{C}\mathbf{P}_{R}$.	
\end{proof}

\subsection{Proof of Lemma \ref{lemma:alg-GCRSS}}\label{Appendix-alg-GCRSS}

Here we present a proof of Lemma \ref{lemma:alg-GCRSS}.

\begin{proof}[Proof of Lemma \ref{lemma:alg-GCRSS}]

For convenience, we denote $\widehat{k}=k-|\widehat{S}|$ and $\widehat{r}=r-|\widehat{R}|$.
Recall that we denote $\mathbf{V}=\mathbf{B}\mathbf{B}^{\rm T}\mathbf{Q}_{\widehat{S}}\in\mathbb{R}^{n\times n}$ and $\mathbf{W}=\begin{pmatrix}
\mathbf{A}   \\
\mathbf{C}
\end{pmatrix}\mathbf{P}_{\widehat{R}}[\mathbf{A}^{\rm T}\mathbf{Q}_{\widehat{S}}\ \ \mathbf{C}^{\rm T}]\in\mathbb{R}^{(n+n_C)\times (n+n_C)}$.
By equation \eqref{P-exp-2} in Proposition \ref{P-exp-r=0} we have
\begingroup\fontsize{10pt}{12pt}\selectfont 
\begin{equation}\label{noveq2}
\begin{aligned}
&P_{k-|\widehat{S}|,r-|\widehat{R}|}(x;\mathbf{Q}_{\widehat{S}}\mathbf{A}\mathbf{P}_{\widehat{R}},\mathbf{Q}_{\widehat{S}}\mathbf{B},\mathbf{C}\mathbf{P}_{\widehat{R}})\\
&=\frac{(-1)^{\widehat{r}}\cdot x^{d-n+\widehat{k}}}{ \widehat{k}!(n_C-\widehat{r})!}  \partial_y^{\widehat{k}}\partial_z^{n_C-\widehat{r}}\ 
\det\left[
\begin{pmatrix}
x\cdot \mathbf{I}_n +y\cdot
\mathbf{Q}_{\widehat{S}}\mathbf{B}\mathbf{B}^{\rm T}\mathbf{Q}_{\widehat{S}}  & \mathbf{0}  \\
 \mathbf{0} & z\cdot \mathbf{I}_{n_C}
\end{pmatrix}
-
\begin{pmatrix}
\mathbf{Q}_{\widehat{S}}\mathbf{A}   \\
\mathbf{C}
\end{pmatrix}
\mathbf{P}_{\widehat{R}}\begin{pmatrix}
\mathbf{A}^{\rm T}\mathbf{Q}_{\widehat{S}} & \mathbf{C}^{\rm T}
\end{pmatrix}
\right]    \ \Bigg\vert_{y=z=0}\\
&\overset{(a)}=\frac{(-1)^{\widehat{r}}\cdot x^{d-n+\widehat{k}}}{ \widehat{k}!(n_C-\widehat{r})!} \partial_y^{\widehat{k}}\partial_z^{n_C-\widehat{r}}\ 
\det\left[
\begin{pmatrix}
x\cdot \mathbf{I}_n +y\cdot
\mathbf{Q}_{\widehat{S}}\mathbf{B}\mathbf{B}^{\rm T}\mathbf{Q}_{\widehat{S}}  & \mathbf{0}  \\
 \mathbf{0} & (x+z)\cdot \mathbf{I}_{n_C}
\end{pmatrix}
-
\begin{pmatrix}
\mathbf{Q}_{\widehat{S}}\mathbf{A}   \\
\mathbf{C}
\end{pmatrix}
\mathbf{P}_{\widehat{R}}\begin{pmatrix}
\mathbf{A}^{\rm T}\mathbf{Q}_{\widehat{S}} & \mathbf{C}^{\rm T}
\end{pmatrix}
\right]    \ \Bigg\vert_{y=0,z=-x}\\
&\overset{(b)}=\frac{(-1)^{\widehat{r}}\cdot x^{d-n+\widehat{k}}}{ \widehat{k}!(n_C-\widehat{r})!} \partial_y^{\widehat{k}}\partial_z^{n_C-\widehat{r}}\ 
\det\left[x\cdot \mathbf{I}_{n+n_C} +
\begin{pmatrix}
y\cdot
\mathbf{V}  & \mathbf{0}  \\
 \mathbf{0} & z\cdot \mathbf{I}_{n_C}
\end{pmatrix}
-
\mathbf{W}
\right]    \ \Bigg\vert_{y=0,z=-x},
\end{aligned}
\end{equation}
\endgroup
where ($a$) follows from the chain rule and  ($b$) follows from the Weinstein-Aronszajn identity \eqref{Weinstein-Aronszajn2}. 
By equation \eqref{expand det-x} we can expand
\begin{equation}\label{noveq3}
%\begin{aligned}
g(x,y,z):=\det\left[x\cdot \mathbf{I}_{n+n_C} +
\begin{pmatrix}
y\cdot
\mathbf{V}  & \mathbf{0}  \\
 \mathbf{0} & z\cdot \mathbf{I}_{n_C}
\end{pmatrix}
-
\mathbf{W}
\right] 
=\sum_{j=0}^{n_C+n}x^{n_C+n-j} \sum_{c_1=0}^{j}\sum_{c_2=0}^{j-c_1}\mu_{j,c_1,c_2} z^{c_1}	y^{c_2},	
\end{equation}
where $\mu_{j,c_1,c_2}$ denotes the coefficient of $x^{n_C+n-j}z^{c_1}y^{c_2}$ in $g(x,y,z)$. 
%Note that $\mu_{j,c_1,c_2}=0$ if $c_1>n_C$.
Substituting \eqref{noveq3} into \eqref{noveq2} gives
\begingroup\fontsize{10pt}{12pt}\selectfont
\begin{equation}\label{noveq-1}
\begin{aligned}
P_{k-|\widehat{S}|,r-|\widehat{R}|}(x;\mathbf{Q}_{\widehat{S}}\mathbf{A}\mathbf{P}_{\widehat{R}},\mathbf{Q}_{\widehat{S}}\mathbf{B},\mathbf{C}\mathbf{P}_{\widehat{R}})
&=\frac{(-1)^{\widehat{r}}\cdot x^{d-n+\widehat{k}}}{ \widehat{k}!(n_C-\widehat{r})!}  
\sum_{j=0}^{n_C+n}  \sum_{c_1=0}^{j}\sum_{c_2=0}^{j-c_1}\mu_{j,c_1,c_2}\cdot x^{n_C+n-j}\cdot 
 \partial_y^{\widehat{k}}\partial_z^{n_C-\widehat{r}}\  z^{c_1}	y^{c_2}   \ \Bigg\vert_{y=0,z=-x}\\
&=\frac{(-1)^{\widehat{r}}\cdot x^{d-n+\widehat{k}}}{(n_C-\widehat{r})!}  
\sum_{j=\widehat{k}}^{n_C+n}   \sum_{c_1=0}^{j-\widehat{k}}\mu_{j,c_1,\widehat{r}}\cdot x^{n_C+n-j}\cdot  \partial_z^{n_C-\widehat{r}}\  z^{c_1}	   \ \Bigg\vert_{z=-x}\\
&=\frac{x^{d-n+\widehat{k}}}{(n_C-\widehat{r})!}  
\sum_{j=\widehat{k}+n_C-\widehat{r}}^{n_C+n}   \sum_{c_1=n_C-\widehat{r}}^{j-\widehat{k}} \frac{(-1)^{c_1-n_C}\cdot c_1!}{(c_1-n_C+\widehat{r})!}\cdot 
 \mu_{j,c_1,\widehat{r}}\cdot x^{n-j+c_1+\widehat{r}}. \\
%&\overset{(c)}=\frac{(-1)^{\widehat{r}}\cdot x^{d-n+\widehat{k}}}{(n_C-\widehat{r})!}  
% \sum_{c_1=0}^{n_C}\sum_{j=c_1+\widehat{k}}^{n_C+n}  \mu_{j,c_1,\widehat{r}}\cdot x^{n_C+n-j}\cdot  \partial_z^{n_C-\widehat{r}}\  z^{c_1}	   \ \Bigg\vert_{z=-x}\\
%&=x^{d+\widehat{k}+\widehat{r}}\sum_{c_1=n_C-\widehat{r}}^{n_C}\sum_{j=c_1+\widehat{k}}^{n_C+n}  (-1)^{c_1-n_C}\cdot \binom{c_1}{n_C-\widehat{r}}\cdot \mu_{j,c_1,\widehat{r}}\cdot  x^{c_1-j}.	
\end{aligned}
\end{equation}
\endgroup
%Here, in equation ($c$) we use $\mu_{j,c_1,\widehat{r}}=0$ if $c_1>n_C$. 
We claim that, when the matrices $\mathbf{V}$ and $\mathbf{W}$ are known, all the coefficients $\mu_{j,c_1,c_2}$, $0\leq j\leq n_C+n$, $0\leq c_1\leq j$, $0\leq c_2\leq j-c_1$, can be computed in time $O((n_C+n)^{w+2})$, where $w \in  (2, 2.373)$ is the matrix multiplication exponent.
Then, using equation \eqref{noveq-1}, we can compute the polynomial $P_{k-|\widehat{S}|,r-|\widehat{R}|}(x;\mathbf{Q}_{\widehat{S}}\mathbf{A}\mathbf{P}_{\widehat{R}},\mathbf{Q}_{\widehat{S}}\mathbf{B},\mathbf{C}\mathbf{P}_{\widehat{R}})$ in time $O((n+\widehat{r}-\widehat{k})^2)$. Note that $\widehat{r}\leq r\leq \mathrm{rank}(\mathbf{C})\leq n_C$, so the total time complexity for computing $P_{k-|\widehat{S}|,r-|\widehat{R}|}(x;\mathbf{Q}_{\widehat{S}}\mathbf{A}\mathbf{P}_{\widehat{R}},\mathbf{Q}_{\widehat{S}}\mathbf{B},\mathbf{C}\mathbf{P}_{\widehat{R}})$ is 
\begin{equation*}
O\big ((n_C+n)^{w+2}\big)+O\big((n+\widehat{r}-\widehat{k})^2\big)=O\big((n_C+n)^{w+2}\big).	
\end{equation*}

It remains to prove that all the coefficients $\mu_{j,c_1,c_2}$ can be computed in time $O((n_C+n)^{w+2})$. 
To compute the coefficients $\mu_{j,c_1,c_2}$, we use the polynomial interpolation. 
For convenience, we let $N=n_C+n+1$ and let $\omega_{s,N}=e^{2\pi \frac{s}{N}\sqrt{-1}}$ be the $N$-th roots of unity, $s=1,\ldots,N$. 
We first compute the matrix $\begin{pmatrix}
y\cdot
\mathbf{V}  & \mathbf{0}  \\
 \mathbf{0} & z\cdot \mathbf{I}_{n_C}
\end{pmatrix}
-
\mathbf{W}\in\mathbb{R}^{(N-1)\times(N-1)}$ and its characteristic polynomial by setting $(y,z)=(\omega_{s_1,N},\omega_{s_2,N})$ for each pair $(s_1,s_2)\in[N]\times [N]$. 
This yields $\{g_j(\omega_{s_1,N},\omega_{s_2,N})\}_{0\leq j\leq N-1,1\leq s_1\leq N,1\leq s_2\leq N}$. 
Note that the time complexity for computing a $(N-1)\times(N-1)$ matrix is $O(N^w)$ \cite{KG85}. Therefore, the total time complexity here is $O(N^2\cdot (n^2+N^{w}))=O(N^{w+2})$. 
Then, for any fixed $j\in\{0,1,\ldots,N-1\}$, we can recover the coefficients $\{\mu_{j,c_1,c_2}\}_{0\leq c_1\leq j,0\leq c_2\leq j-c_1}$ from $\{g_j(\omega_{s_1,N},\omega_{s_2,N})\}_{1\leq s_1\leq N,1\leq s_2\leq N}$, using the 2-dimensional Fast Fourier Transform. 
This takes $O(N^2 \log N)$ for each $0\leq j\leq N-1$ \cite[Chapter 23]{CG99}. As such, we can compute all the coefficients $\mu_{j,c_1,c_2}$ in time 
\begin{equation*}
O(N^{w+2})+N\cdot O( N^2 \log N)=O(N^{w+2})=O\big((n_C+n)^{w+2}\big).	
\end{equation*}
This completes the proof.

\end{proof}

\subsection{Proof of Proposition \ref{fk:exp3}}\label{Appendix3}

Here we present a proof of Proposition \ref{fk:exp3}.

\begin{proof}[Proof of Proposition \ref{fk:exp3}]
For convenience, we let
\begin{equation*}
\widetilde{\mathbf{A}}:=\mathbf{U}_{ }^{\rm T}\mathbf{A}\mathbf{A}^{\rm T}\mathbf{U}\in \mathbb{R}^{m\times m}\quad\text{and}\quad a_{J}:=\det[\widetilde{\mathbf{A}}_{J,J}],\ \forall  J\subset[m]
\end{equation*}
Noting that $\mathbf{B}_{ }\mathbf{B}_{ }^{\dagger}=\mathbf{U}_{ }\mathbf{U}_{ }^{\rm T}$,  we have
\begin{equation}\label{Frob:eq18}
\begin{aligned}
\det[x\cdot \mathbf{I}_n-( \mathbf{B}_{ }\mathbf{B}_{ }^{\dagger}\mathbf{A}\mathbf{A}^{\rm T}\mathbf{B}_{ }\mathbf{B}_{ }^{\dagger}+y\cdot \mathbf{B}\mathbf{B}^{\rm T})]&=\det[x\cdot\mathbf{I}_n- (\mathbf{U}_{ }\mathbf{U}_{ }^{\rm T}\mathbf{A}\mathbf{A}^{\rm T}\mathbf{U}_{ }+y\cdot \mathbf{U}_{ }\mathbf{\Sigma})\mathbf{U}_{ }^{\rm T}]\\
&\overset{(a)}=x^{n-m} \det[x\cdot\mathbf{I}_m-\mathbf{U}_{ }^{\rm T}(\mathbf{U}_{ }\mathbf{U}_{ }^{\rm T}\mathbf{A}\mathbf{A}^{\rm T}\mathbf{U}_{}+y\cdot \mathbf{U} \mathbf{\Sigma})]\\
&=x^{n-m} \det[\mathrm{diag}(x-y\cdot b_1,\ldots,x-y\cdot b_m)-\widetilde{\mathbf{A}}]\\
&\overset{(b)}=x^{n-m}  \sum_{J\subset[m]} (-1)^{\vert J\vert}\cdot \det[\widetilde{\mathbf{A}}_{J,J}]\cdot \prod_{l\in J^C}(x-y\cdot b_l)\\
&=\sum_{j=0}^m \sum_{i=0}^{m-j}(-1)^{i+j}\cdot  c_{j,i}\cdot x^{n-i-j}\cdot y^i,
\end{aligned}
\end{equation}
where
\begin{equation*}
c_{j,i}:=\sum_{J\subset [m],\vert J\vert =j}\sum_{S\subset J^C,\vert S\vert=i} a_J\cdot \mathbf{b}^{S},\quad\forall j\in[m],\forall i\in[m-j].	
\end{equation*}
Here, equation ($a$) follows from the Weinstein-Aronszajn identity \eqref{Weinstein-Aronszajn2} and equation ($b$) follows from \eqref{expand det}.
Then, substituting \eqref{Frob:eq18} into \eqref{lemma_f_k:expression2} we obtain
\begin{equation}\label{Frob:eq20}
P_{k}(x;\mathbf{B}_{ }\mathbf{B}_{ }^{\dagger}\mathbf{A},\mathbf{B})=\sum_{j=0}^{m-k}(-1)^{j}\cdot c_{j,k}\cdot x^{d-j}.
\end{equation}
It remains to show that the right hand side of \eqref{section5:fk} is equal to the right hand side of \eqref{Frob:eq20}.
Combining \eqref{def-hkx} with \eqref{expand det} we have
\begin{equation*}
h_k(z_1,\ldots,z_m)\cdot \det[\mathbf{Z}-\widetilde{\mathbf{A}}]=k!	\sum_{S\subset[m],\vert S\vert=m-k} \sum_{R\subset[m]}  (-1)^{\vert R^C\vert}\cdot a_{R^C}\cdot \mathbf{b}^{S^C}\cdot \mathbf{Z}^S \cdot \mathbf{Z}^R.
\end{equation*}
Since
\begin{equation*}
\Big(\prod_{i=1}^m \partial_{z_i}\Big)\cdot 	\Big(\mathbf{Z}^S \cdot \mathbf{Z}^R\Big)=\begin{cases}
2^{\vert S\cap R\vert}\cdot \mathbf{Z}^{S\cap R} & \text{if $S^C\subset R$}\\
0 & \text{else}	
\end{cases},
\end{equation*}
we further obtain
\begin{equation*}
\begin{aligned}
\Big(\prod_{i=1}^m \partial_{z_i}\Big)\cdot	 h_k(z_1,\ldots,z_m)\cdot \det[\mathbf{Z}-\widetilde{\mathbf{A}}]&=k!	\sum_{\substack{S\subset[m]\\ \vert S\vert=m-k}} \sum_{\substack{R\subset [m]\\ R\supset S^C}}  (-1)^{\vert R^C\vert}\cdot a_{R^C}\cdot \mathbf{b}^{S^C}\cdot 2^{\vert S\cap R\vert}\cdot \mathbf{Z}^{S\cap R}.
\end{aligned}
\end{equation*}
For each two subsets $S$ and $R$ in the above summation, by letting $W:=R^C$ and $T:=S\backslash W$, we can rewrite the right hand side of the above equation as
\begin{equation*}
\Big(\prod_{i=1}^m \partial_{z_i}\Big)\cdot	 h_k(z_1,\ldots,z_m)\cdot \det[\mathbf{Z}- \widetilde{\mathbf{A}}]=k!  \sum_{\substack{W\subset[m]\\ \vert W\vert\leq m-k}}\sum_{\substack{T\subset W^C\\ \vert T\vert=m-k-\vert W\vert}} (-1)^{\vert W\vert}\cdot a_{W}\cdot \mathbf{b}^{(T\cup W)^C}\cdot 2^{\vert T\vert}\cdot  \mathbf{Z}^{T}.
\end{equation*}
Therefore,
\begin{equation}\label{section5:fk4}
\Big(\prod_{i=1}^m \partial_{z_i}\Big)\cdot	 h_k(z_1,\ldots,z_m)\cdot \det[\mathbf{Z}- \widetilde{\mathbf{A}}]\ \Big|_{z_i=\frac{x}{2},\forall i\in[m]}=k!\sum_{j=0}^{m-k}(-1)^{j}\cdot  \widehat{c}_j\cdot x^{m-k-j},	
\end{equation}
where
\begin{equation*}
\begin{aligned}
\widehat{c}_j&:=\sum_{\substack{W\subset[m]\\ \vert W\vert=j}}\sum_{\substack{T\subset W^C\\ \vert T\vert=m-k-j}} a_W\cdot \mathbf{b}^{(T\cup W)^C}=\sum_{\substack{W\subset[m]\\ \vert W\vert=j}}\sum_{\substack{Q\subset W^C\\ \vert Q\vert=k}} a_W\cdot \mathbf{b}^{Q}.	
\end{aligned}
\end{equation*}
%Note that $\widehat{c}_j=c_{j,k}$ for each $0\leq j\leq m-k$, so substituting $\widehat{c}_j=c_{j,k}$ and \eqref{section5:fk4} into the right hand side of \eqref{section5:fk} we obtain
Observe that $\widehat{c}_j=c_{j,k}$ for each $0\leq j\leq m-k$. By substituting $\widehat{c}_j=c_{j,k}$ and \eqref{section5:fk4} into the right-hand side of \eqref{section5:fk}, we obtain:
\begin{equation*}
\frac{x^{d+k-m}}{k!}\cdot	\Big(\prod_{i=1}^m \partial_{z_i}\Big)\cdot h_k(z_1,\ldots,z_m)\cdot \det[\mathbf{Z}- \widetilde{\mathbf{A}}]\ \Big|_{z_i=\frac{x}{2},\forall i\in[m]}=\sum_{j=0}^{m-k}(-1)^{j}\cdot  c_{j,k}\cdot x^{d-j},
\end{equation*}
which is identical to the right-hand side of \eqref{Frob:eq20}.  Consequently, we have reached our desired conclusion.

\end{proof}

%%%%%%%%%%%%%%%%%%%%%%%%%%%%%%%%%%%%%%%%%%%%%%%%%%%%%%%%%%%%%%%
%%%%%%%%%%%%%%%%%%%%%%%%%%%%%%%%%%%%%%%%%%%%%%%%%%%%%%%%%%%%%%%
%%%%%%%%%%%%%%%%%%%%%%%%%%%%%%%%%%%%%%%%%%%%%%%%%%%%%%%%%%%%%%%
%%%%%%%%%%%%%%%%%%%%%%%%%%%%%%%%%%%%%%%%%%%%%%%%%%%%%%%%%%%%%%%
%%%%%%%%%%%%%%%%%%%%%%%%%%%%%%%%%%%%%%%%%%%%%%%%%%%%%%%%%%%%%%%
%%%%%%%%%%%%%%%%%%%%%%%%%%%%%%%%%%%%%%%%%%%%%%%%%%%%%%%%%%%%%%%

\nocite{1}

\Addresses

\end{document}